\documentclass[psamsfonts]{amsart}

\usepackage{amssymb,amsfonts}
\usepackage[all,arc]{xy}
\usepackage{enumerate}
\usepackage{mathrsfs}
\usepackage[utf8]{inputenc}
\usepackage{hyperref}
\usepackage{mathtools}
\usepackage{tikz}
\usepackage{enumitem}
\newtheorem{thm}{Theorem}[section]
\newtheorem{theorem}{Theorem}[section]
\newtheorem{cor}[thm]{Corollary}
\newtheorem{prop}[thm]{Proposition}

\newtheorem{lemma}[thm]{Lemma}

\newtheorem{question}[thm]{Question}
\newtheorem{claim}[thm]{Claim}
\newtheorem{fact}[thm]{Fact}

\theoremstyle{definition}
\newtheorem{definition}[thm]{Definition}

\theoremstyle{remark}
\newtheorem{remark}[thm]{Remark}

\newtheorem{observation}[thm]{Observation}
\newtheorem{notation}[thm]{Notation}

\makeatletter
\let\c@equation\c@thm
\makeatother
\numberwithin{equation}{section}

\bibliographystyle{plain}

\title[Rado's Conjecture and its Baire version]{Rado's Conjecture and its Baire version}

\author{Jing Zhang}

\newcommand{\Addresses}{{
  \bigskip
  \footnotesize

 \textsc{Department of Mathematical Sciences,\\ Carnegie Mellon University, \\Pittsburgh, Pennsylvania, 15213}\par\nopagebreak
  \textit{E-mail}: \texttt{jingzhang@cmu.edu}
}}

\begin{document}

\begin{abstract}
Rado's Conjecture is a compactness/reflection principle that says any nonspecial tree of height $\omega_1$ has a nonspecial subtree of size $\aleph_1$. Though incompatible with Martin's Axiom, Rado's Conjecture turns out to have many interesting consequences that are also implied by certain forcing axioms. In this paper, we obtain consistency results concerning Rado's Conjecture and its Baire version. In particular, we show that a fragment of $\mathrm{PFA}$, which is the forcing axiom for \emph{Baire Indestructibly Proper forcings}, is compatible with the Baire Rado's Conjecture. As a corollary, the Baire Rado's Conjecture does not imply Rado's Conjecture.
Then we discuss the strength and limitations of the Baire Rado's Conjecture regarding its interaction with stationary reflection principles and some families of weak square principles. Finally we investigate the influence of Rado's Conjecture on some polarized partition relations.
\end{abstract}

\maketitle
\tableofcontents
\let\thefootnote\relax\footnotetext{2010 \emph{Mathematics Subject Classification}. Primary: 03E05, 03E35, 03E55, 03E65. }
\section{Introduction}\label{introduction}

\begin{definition}
A partial order $(T, <_T)$ is a \emph{tree} if for each $t\in T$, $\{s\in T: s<_T t\}$ is well ordered under the tree order $<_T$.
\end{definition}

When it is clear from the context, we will use $T$ to refer to $(T,<_T)$ and $<$ to refer to $<_T$.

\begin{definition}
For a given tree $T$, for each $t\in T$, the \emph{height} of $t$ in $T$ is the order type of its predecessors under the tree order, denoted as $ht_T(t)$. The height of the tree $T$ is the least ordinal $\alpha$ such that for all $t\in T$, $ht_T(t)<\alpha$.
\end{definition}

\begin{remark}
A tree $T$ 
\begin{enumerate}
\item is \emph{non-trivial} if each $t\in T$ has two incompatible extensions;
\item \emph{does not split on the limit levels} if for each limit $\alpha$ and $s,s'\in T$ such that $ht_T(s)=ht_T(s')=\alpha$, if $\{t\in T: t<s\}=\{t\in T: t<s'\}$, then $s=s'$.
\end{enumerate}
Restricting ourselves to trees that are non-trivial and do not split on the limit levels does not affect any application of Rado's Conjecture.
\end{remark}

The trees we deal with for the rest of the paper are non-trivial, are of height $\omega_1$, have a unique minimal element and do not split on the limit levels, unless otherwise stated. The unique minimal element (or the \emph{root}) of a tree $T$ will be denoted as $\mathrm{root}_T$.

Todor\v{c}evi\'{c} studied Rado's Conjecture, established some of its equivalent forms, and showed its consistency by collapsing a supercompact cardinal to $\omega_2$ in \cite{MR686495}. We will state Rado's Conjecture in terms of its tree formulation in this paper.

\begin{definition}
A tree $T$ is \emph{special} if there exists $g: T\to \omega$ such that $g$ is injective on chains.
\end{definition}

\begin{definition}\label{RC}
Rado's Conjecture ($\mathrm{RC}$) abbreviates the following: any nonspecial tree has a nonspecial subtree of size $\aleph_1$.
\end{definition}

Rado's Conjecture has interesting consequences. To sample a few:

\begin{theorem}[Todor\v{c}evi\'{c} \cite{MR1261218}, \cite{MR686495}] Rado's Conjecture implies:
\begin{enumerate}
\item $\theta^\omega = \theta$ for all regular $\theta\geq \omega_2$,
\item the Singular Cardinal Hypothesis,
\item $\square(\kappa)$ fails for all regular $\kappa\geq \omega_2$,
\item the Strong Chang's Conjecture,
\item for any regular cardinal $\lambda\geq \omega_2$, any stationary subset of $\lambda\cap \mathrm{cof}(\omega)$ reflects.
\end{enumerate}
\end{theorem}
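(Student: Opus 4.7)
All five items can be derived from a single schema: by contrapositive, $\mathrm{RC}$ asserts that if every subtree of $T$ of size $\aleph_1$ is special, then $T$ itself is special. For each combinatorial phenomenon at a cardinal $\lambda\geq\omega_2$ whose failure refuses to reflect to a structure of size $\aleph_1$, the plan is to code the failure into a nonspecial tree $T$ of height $\omega_1$ whose $\aleph_1$-sized subtrees are automatically special, contradicting $\mathrm{RC}$. The first step, then, is to state and apply this contrapositive form as a working lemma.

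For (5), given a non-reflecting stationary $S\subseteq \lambda\cap\mathrm{cof}(\omega)$, I would consider the tree $T(S)$ whose $\alpha$-th level consists of closed subsets of $\lambda$ of order type $\alpha$ whose supremum lies in $S$, ordered by end-extension. A specialization of $T(S)$ would amount to partitioning $S$ into countably many sets each nonstationary in every $\beta\in S$, contradicting the stationarity of $S$; conversely, any subtree of size $\aleph_1$ lives inside some $X\in[\lambda]^{\aleph_1}$, and the non-reflection hypothesis yields a club in $\sup(X)$ disjoint from $S$, which can be used to specialize the subtree explicitly. Item (3) proceeds analogously using Todor\v{c}evi\'c's tree associated to a $\square(\kappa)$-sequence via the walks-on-ordinals machinery: coherence of the square sequence globally obstructs specialization, while on any $X\in[\kappa]^{\aleph_1}$ the sequence trivializes, giving a specialization of every $\aleph_1$-sized subtree. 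Item (1) is similar in flavor: if $\theta^\omega>\theta$, one builds a tree whose branches enumerate too many countable traces on $\theta$, and uses that any $\aleph_1$-sized subtree captures only $\aleph_1$ traces to specialize it.

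Item (2) I would not prove directly but deduce from (5): stationary reflection at points of cofinality $\omega$ above $\omega_2$ is known (Shelah, via pcf) to imply the Singular Cardinal Hypothesis. Item (4), Strong Chang's Conjecture, I would establish by applying the same schema to the tree of countable elementary submodels of a suitable $(H(\theta),\in)$ ordered by $\in$, extracting a cofinal chain of models of size $\aleph_1$ whose union has the desired cofinality property; the nonspecialness of this model-tree and the automatic specialness of its $\aleph_1$-sized subtrees are the two things to check.

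The main obstacle in every case is not the application of $\mathrm{RC}$ but the construction of the tree and, above all, the verification that every subtree of size $\aleph_1$ is genuinely special. For (3) this forces us into the walks-on-ordinals/coherent-sequence formalism, which is the technical heart of the whole theorem; for (5) it is the production of the specializing club from non-reflection on a set of size $\aleph_1$; for (4) it is the correct choice of ordering on countable submodels so that a nonspecial subtree yields the required cofinally-$\omega_1$ submodel. I would spend most of the effort setting up these tree codings uniformly so that the $\mathrm{RC}$ step at the end becomes a single line.
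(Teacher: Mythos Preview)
The paper does not prove this theorem; it is stated in the introduction as a result of Todor\v{c}evi\'c with citations to \cite{MR1261218} and \cite{MR686495}, and items (1)--(4) receive no further treatment. So for those items there is nothing to compare your plan against. The paper \emph{does} later give its own proof of item (5), as Theorem~\ref{ORefl}, deriving it from the weaker principle $\mathrm{RC}^B$, and here your approach differs from the paper's in an instructive way.

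You argue by contrapositive: assume $S$ is non-reflecting, and show every $\aleph_1$-sized subtree of $T(S)$ is special. The paper instead argues directly: $T(S)$ is Baire, so $\mathrm{RC}^B$ produces a nonspecial $\aleph_1$-sized subtree $T'$; one then chooses such a $T'$ with \emph{minimal} supremum $\delta$, uses minimality to force $\mathrm{cf}(\delta)=\omega_1$, and finally applies the Pressing Down Lemma for trees to show $S\cap\delta$ is stationary. The paper's route avoids having to specialize every $\aleph_1$-subtree uniformly; the minimality trick isolates a single reflection point.

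Your contrapositive plan has a gap exactly where the paper's minimality argument does work. You write that any $\aleph_1$-subtree lives in some $X\in[\lambda]^{\aleph_1}$, and that non-reflection yields a club in $\sup(X)$ disjoint from $S$ which specializes the subtree. But non-reflection says nothing when $\mathrm{cf}(\sup X)=\omega$: there is no meaningful club to extract, and yet you must still specialize those subtrees. This case can be handled (decompose along a cofinal $\omega$-sequence and recurse, essentially the paper's observation that a countable union of special trees is special), but as written your sketch does not cover it. Your description of why $T(S)$ is nonspecial (``a specialization would partition $S$ into countably many sets each nonstationary in every $\beta\in S$'') is also not quite right: a specializing map partitions $T(S)$, not $S$, and the correct argument is that stationarity of $S$ makes $T(S)$ Baire, hence nonspecial.
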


\begin{theorem}[Feng \cite{MR1683892}]
Rado's Conjecture implies the non-stationary ideal on $\omega_1$ is presaturated.
\end{theorem}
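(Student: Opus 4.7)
The plan is to proceed by contraposition: assume that $\mathrm{NS}_{\omega_1}$ fails to be presaturated, and use this to falsify $\mathrm{RC}$ by producing a nonspecial tree of height $\omega_1$ with no nonspecial subtree of size $\aleph_1$.

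The natural route exploits Todor\v{c}evi\'c's reformulations of $\mathrm{RC}$ in terms of preservation properties of posets. By the already-listed consequences, $\mathrm{RC}$ implies Strong Chang's Conjecture, which is known to yield precipitousness of $\mathrm{NS}_{\omega_1}$; in particular the quotient forcing $P(\omega_1)/\mathrm{NS}_{\omega_1}$ preserves $\omega_1$. Presaturation then amounts to the additional fact that this quotient preserves $\omega_2$, and here I would invoke the $\mathrm{RC}$-equivalent that every $\omega_1$-preserving forcing is semiproper (or at least $\omega_2$-preserving), applied directly to the NS-quotient.

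A direct alternative, in case the semiproperness reformulation is unavailable in the form needed, starts with a name for a collapsing function $\omega_1 \to \omega_2$ forced by the NS-quotient. Extracting a matrix $\langle S^\xi_\alpha : \xi < \omega_1,\, \alpha < \omega_2\rangle$ of stationary subsets of $\omega_1$ from this name, one builds a tree $T^\ast$ of height $\omega_1$ whose level-$\xi$ nodes are finite partial functions $\sigma\colon \omega_2 \to 2$ compatible with the matrix at $\xi$ (with a pruning to ensure non-triviality and non-splitting at limits). Nonspecialness of $T^\ast$ follows via a Fodor-style argument: any putative specialization $g\colon T^\ast \to \omega$ would pin down an ordinal in $\omega_2$ on a stationary set, contradicting the non-presaturation witness. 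On the other hand, every subtree $T' \subseteq T^\ast$ of size $\aleph_1$ involves only $\aleph_1$-many columns (say those indexed by some $J \subseteq \omega_2$ with $|J|=\aleph_1$), and can be specialized by an iterated pressing-down argument on the restricted matrix $\langle S^\xi_\alpha : \xi<\omega_1,\,\alpha\in J\rangle$.

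The main obstacle --- under either route --- is the coding step that converts the $\omega_2$-scale behavior of the non-presaturation witness into the $\omega_1$-scale structure of a tree, without losing information needed for nonspecialness yet shedding enough to specialize every $\aleph_1$-subtree. Matching this calibration requires the same sort of elementary-submodel control already present in Todor\v{c}evi\'c's proofs of the preservation-theoretic equivalents of $\mathrm{RC}$, so I expect the cleanest writeup to proceed through such an equivalent (precipitousness plus an $\mathrm{RC}$-driven $\omega_2$-preservation theorem) rather than through an ad-hoc tree construction.
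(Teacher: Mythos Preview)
The paper does not give its own proof of this theorem; it is stated in the introduction as a known result and attributed to Feng \cite{MR1683892} by citation only. So there is nothing in the paper to compare your proposal against.

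That said, your proposal as written does not constitute a proof. The first route has a genuine gap: you invoke an ``$\mathrm{RC}$-equivalent that every $\omega_1$-preserving forcing is semiproper (or at least $\omega_2$-preserving),'' but no such equivalent exists. Doebler's theorem (cited in the paper) says that under $\mathrm{RC}$ every \emph{stationary set preserving} forcing is semiproper, not every $\omega_1$-preserving forcing; and in any case semiproperness does not imply $\omega_2$-preservation (e.g.\ $\mathrm{Coll}(\omega_1,\omega_2)$ is countably closed, hence proper, hence semiproper, and collapses $\omega_2$). So even granting that $P(\omega_1)/\mathrm{NS}_{\omega_1}$ is stationary set preserving and therefore semiproper under $\mathrm{RC}$, you still have not argued that it preserves $\omega_2$, which is exactly the content of presaturation beyond precipitousness.

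Your second route is only a sketch: you describe a matrix of stationary sets and a tree $T^\ast$ built from finite partial functions, but you do not actually verify that $T^\ast$ is nonspecial, nor that every $\aleph_1$-subtree is special, and you yourself flag the ``coding step'' as the main obstacle without resolving it. As it stands this is a plan for a proof, not a proof.

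If you want to reconstruct Feng's argument, the cleaner path is to work with a combinatorial characterization of presaturation (for every $\omega$-sequence of maximal antichains $\langle \mathcal{A}_n : n<\omega\rangle$ in $P(\omega_1)/\mathrm{NS}_{\omega_1}$ and every stationary $S$, there is a stationary $S'\subseteq S$ below which each $\mathcal{A}_n$ has at most $\aleph_1$ members of positive intersection) and derive it from a reflection principle that $\mathrm{RC}$ is known to imply, rather than trying to pass through semiproperness of the quotient.
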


\begin{theorem}[Doebler \cite{MR3065118}]\label{Doebler}
Rado's Conjecture implies that all stationary set preserving forcings are semiproper.
\end{theorem}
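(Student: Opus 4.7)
The strategy is by contradiction: assume $\mathbb{P}$ is stationary set preserving but not semiproper, and derive a contradiction from RC. By the standard reformulation of semiproperness, there exist a condition $p_0 \in \mathbb{P}$ and a sufficiently large regular $\theta$ for which
\[
\mathcal{S} = \{M \in [H_\theta]^\omega : M \prec H_\theta,\; \mathbb{P}, p_0 \in M,\; \text{no } q \leq p_0 \text{ is } M\text{-semigeneric}\}
\]
is stationary in $[H_\theta]^\omega$.

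The first step is to package this failure into a single tree $T$ of height $\omega_1$ built from $\mathbb{P}$, $p_0$, and a bookkeeping of $\mathbb{P}$-names for countable ordinals. A natural candidate places at level $\alpha$ pairs consisting of a countable $M \in \mathcal{S}$ with $M \cap \omega_1 = \alpha$ together with a descending $\omega$-sequence $\vec{q}$ below $p_0$ in $\mathbb{P} \cap M$ that systematically meets an $M$-chosen enumeration of maximal antichains of $\mathbb{P}$-names for countable ordinals, all while forcing those ordinals outside $\alpha$; the tree order is end-extension of models together with coherence of the $\vec{q}$-sequences, with non-splitting at limits enforced by coding. The key verification is that $T$ is nonspecial: any specialization $g : T \to \omega$ injective on chains can be transformed, via the stationarity of $\mathcal{S}$, into a $\mathbb{P}$-name for a function specializing a stationary subset of $\omega_1$ in the generic extension, contradicting that $\mathbb{P}$ preserves stationary sets.

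With nonspeciality in hand, RC supplies a nonspecial subtree $T' \subseteq T$ of cardinality $\aleph_1$. The second step uses $T'$ to produce an actual contradiction. The natural route is to force with $T'$ ordered by reverse tree order; since $T'$ is nonspecial, this forcing preserves $\omega_1$, and in the extension there is a cofinal branch through $T'$. A careful design of $T$ ensures that such a branch decodes, back in $V$, into a single $M \in \mathcal{S} \cap V$ and a single condition $q \leq p_0$ in $\mathbb{P} \cap V$ that is $M$-semigeneric; since $M$-semigenericity of a ground-model condition is absolute between $V$ and any $\omega_1$-preserving extension, $q$ is $M$-semigeneric already in $V$, contradicting the definition of $\mathcal{S}$.

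The principal obstacle is the precise design of $T$. It must simultaneously (i) be nonspecial by virtue of $\mathbb{P}$ preserving stationary sets, and (ii) be structured so that a cofinal branch through any nonspecial subtree decodes into a genuine $M$-semigeneric extension of $p_0$ for a single $M \in \mathcal{S}$, rather than producing an unwanted $\omega_1$-sequence of distinct models along the way. Achieving this balance, and in particular carrying out the decoding step, is the technical heart of Doebler's argument; I would expect the combinatorics of the encoding, rather than the RC invocation itself, to be where the real work lies, and I would draw on Todor\v{c}evi\'{c}'s tree-construction techniques from \cite{MR686495} and Feng's presaturation result to make it rigorous.
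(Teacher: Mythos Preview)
Your proposal attempts a direct contradiction from a putative stationary-set-preserving but non-semiproper $\mathbb{P}$, but the paper does not argue this way at all. The paper instead proves (in Section~\ref{strength}) that $\mathrm{RC}^B$ implies the Semistationary Reflection principle $\mathrm{SSR}$, and then relies on Shelah's well-known equivalence between $\mathrm{SSR}$ and the assertion that every stationary-set-preserving forcing is semiproper. The tree used there is concrete: given a $\sqsubset$-upward-closed stationary $S\subset[\lambda]^\omega$, one forms $T(S)$ consisting of countable continuous $\lesssim$-increasing sequences from $S$, applies $\mathrm{RC}^B$ to extract a nonspecial $T'\subset T(S)$ of size $\aleph_1$ with $W=\bigcup_{t\in T'}\max(t)$ of minimal supremum, and then a single application of the pressing-down lemma for nonspecial trees shows $S\cap[W]^\omega$ is stationary. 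No forcing with the subtree, no branch-decoding, no absoluteness argument.

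Your direct route has a genuine gap beyond missing details. The nodes of your tree at level $\alpha$ are by design \emph{bad} witnesses: a model $M\in\mathcal{S}$ with $M\cap\omega_1=\alpha$ together with a descending sequence in $\mathbb{P}\cap M$ that forces the relevant countable ordinals to land \emph{outside} $\alpha$. A cofinal branch therefore yields an $\omega_1$-tower of bad models $M_\alpha$ with coherent bad descending sequences---it does not, and cannot, decode into ``a single $M\in\mathcal{S}$ and a single condition $q\leq p_0$ that is $M$-semigeneric,'' since every model along the branch was chosen precisely because no such $q$ exists. Your stated endgame contradicts the very definition of the tree. If there is a workable contradiction hidden in such a branch, it would have to come from the stationary-set-preserving hypothesis interacting with the tower (e.g., showing some ground-model stationary set is killed), and that is an entirely different argument from the one you sketch. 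The paper's detour through $\mathrm{SSR}$ sidesteps all of this: it never needs to force with the subtree, and the contradiction is obtained purely combinatorially via pressing down.
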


\begin{theorem}[Torres-P\'erez and Wu \cite{MR3600760}]
Rado's Conjecture along with $\neg \mathrm{CH}$ implies $\omega_2$ has the strong tree property. Rado's Conjecture implies the failure of $\square(\lambda,\omega)$ for all regular $\lambda\geq \omega_2$.
\end{theorem}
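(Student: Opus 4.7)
The statement splits into two assertions, and the natural approach handles each separately using the RC-consequences already enumerated.

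For the failure of $\square(\lambda,\omega)$ at regular $\lambda\geq\omega_2$ under RC, I would adapt Todorčević's proof that RC refutes $\square(\lambda)$ (item (3)). Assume toward contradiction a $\square(\lambda,\omega)$-sequence $\vec{\mathcal{C}}=\langle\mathcal{C}_\alpha:\alpha<\lambda\rangle$ with $|\mathcal{C}_\alpha|\leq\omega$. Use the sequence and a walking-on-ordinals style analysis to define a tree $T$ of height $\omega_1$ whose level-$\xi$ nodes are the possible initial segments (of length $\xi$) of threads that are compatible with some selector from the $\mathcal{C}_\alpha$. The countability of each $\mathcal{C}_\alpha$ is exactly what keeps $T$ an $\omega_1$-tree. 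I would show $T$ is non-special by a Fodor-type argument: a hypothetical specialization $g:T\to\omega$ would stabilize along a stationary set, producing a coherent thread and hence a $\square(\lambda)$-sequence, contradicting item (3). Apply RC to obtain a non-special $\aleph_1$-subtree $T'$, and then use the stationary reflection at $\lambda\cap\mathrm{cof}(\omega)$ (item (5)) to promote a branch of $T'$ to an actual thread through $\vec{\mathcal{C}}$, contradicting $\square(\lambda,\omega)$.

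For the strong tree property at $\omega_2$ under RC + $\neg\mathrm{CH}$, take an arbitrary thin $(\omega_2,\lambda)$-list $\vec{d}=\langle d_a:a\in\mathcal{P}_{\omega_2}(\lambda)\rangle$ and aim to produce a cofinal branch $D\subseteq\lambda$. Associate to $\vec{d}$ the tree $T$ whose level-$\alpha$ nodes are the distinct traces $d_a\cap\alpha$ as $a$ ranges over elements of $\mathcal{P}_{\omega_2}(\lambda)$ with $a\cap\omega_2=\alpha$; thinness bounds each level below $\omega_2$, and cofinal branches of $T$ decode to cofinal branches of $\vec{d}$. The role of $\neg\mathrm{CH}$ is to prevent countable-level coincidences from collapsing $T$ to something trivial, so that non-triviality of $\vec{d}$ translates faithfully to non-specialness of $T$. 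Along an increasing chain $\langle M_\xi:\xi<\omega_1\rangle$ of countable elementary submodels of a sufficiently large $H_\theta$ containing $\vec{d}$, the restrictions $d_{M_\xi\cap\lambda}$ carve out a non-special subtree to which RC applies, yielding a non-special $\aleph_1$-subtree $T''$. Then Theorem \ref{Doebler} and the strong Chang's conjecture consequence (item (4)) let me lift a branch through $T''$ to a cofinal branch of $T$ by interpreting a generic branch as an element of the ground model.

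The main obstacle in both parts is the jump from the $\aleph_1$-sized subtree—which is all RC directly delivers—back to a cofinal branch at the level $\lambda$ or $\omega_2$ one really cares about. In the $\square(\lambda,\omega)$ case the bridge is stationary reflection at points of cofinality $\omega$, which forces coherence of partial threads; the additional subtlety is juggling the $\omega$-many clubs at each level so that specializing $T$ really does pin down a single coherent selector, rather than leaving room to switch between the $\mathcal{C}_\alpha$'s. In the strong tree property case the bridge is semiproperness of stationary-preserving forcings together with $\neg\mathrm{CH}$ to keep countable submodel behavior informative at the $\omega_2$-level; the core difficulty is showing that a branch captured by a generic extension must already exist in $V$, which is precisely where RC's strong reflection strength enters.
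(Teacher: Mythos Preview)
This theorem is not proved in the paper; it is simply quoted in the introduction as a known result of Torres-P\'erez and Wu \cite{MR3600760}, alongside several other consequences of $\mathrm{RC}$ drawn from the literature. There is therefore no ``paper's own proof'' to compare your proposal against.

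That said, your sketch has real gaps that would need to be closed before it could count as a proof. In the $\square(\lambda,\omega)$ part, the tree $T$ is never actually defined: you speak of ``initial segments (of length $\xi$) of threads compatible with some selector,'' but a thread is a club in $\lambda$, so its countable initial segments are closed bounded subsets of various $\alpha<\lambda$---this does not yield a tree of height $\omega_1$ in any evident way, and the asserted bound on level size from $|\mathcal{C}_\alpha|\le\omega$ is not explained. The claim that a specialization of $T$ would yield a genuine $\square(\lambda)$-sequence (and hence a contradiction with item (3)) is the heart of the matter and is simply asserted. Likewise, the final move---using reflection at $\mathrm{cof}(\omega)$ points to ``promote a branch of $T'$ to an actual thread''---has no mechanism behind it; a branch through an $\aleph_1$-sized subtree gives you a single countable closed set, not a club in $\lambda$.

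For the strong tree property part, the difficulties are similar. You build a tree of traces $d_a\cap\alpha$, but thinness bounds levels by $<\omega_2$, not by $\omega_1$, so this is a tree of height and width $\omega_2$, and $\mathrm{RC}$ as stated does not apply to it directly. The role you assign to $\neg\mathrm{CH}$ (``prevent countable-level coincidences'') is vague, and the closing step---invoking Doebler's theorem and Strong Chang's Conjecture to pull a generic branch back into $V$---is not an argument but a hope: neither of those principles gives a general method for absorbing generic objects into the ground model. The actual proof in \cite{MR3600760} proceeds quite differently, going through an equivalent formulation of $\mathrm{RC}$ in terms of reflection of certain families rather than through tree constructions of the kind you describe.
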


As remarked in \cite{MR686495} at the end of Section 3, many known consequences of Rado's Conjecture follow from a weaker principle, the \emph{Baire version} of Rado's Conjecture. 

Let $T$ be a given tree. We can view $T$ as a forcing poset such that $t'$ is a stronger condition than $t$ iff $t<_T t'$. Therefore, we can talk about subsets of $T$ that are open or dense in the context of a forcing poset.

\begin{definition}
A non-trivial forcing poset $\mathbb{P}$ is \emph{$\omega$-distributive} if forcing with $\mathbb{P}$ does not add new $\omega$-sequences of ordinals.
\end{definition}

 Notice that if $\mathbb{P}$ is separative, then $\mathbb{P}$ is $\omega$-distributive if and only if for any countable collection of open dense sets $\{U_n\subset \mathbb{P}: n\in \omega\}$, $\bigcap_n U_n$ is dense.

\begin{definition}
A tree is \emph{Baire} if it is $\omega$-distributive as a forcing notion.
\end{definition}

\begin{remark}
It is not always the case that a tree is separative. However, we do have that for any tree $T$, the following are equivalent: 
\begin{enumerate}
\item forcing with $T$ adds no new countable sequences of ordinals;
\item forcing with $T$ adds no new functions from $\omega$ to $V$;
\item for any countable collection of dense open subsets $\{U_n: n\in \omega\}$ of $T$, $\bigcap_n U_n$ is dense in $T$.
\end{enumerate}
The implications $(1)\leftrightarrow (2), (3)\rightarrow (1)$ are standard (see for example the section on Distributivity in Chapter 15 of \cite{MR1940513}, page 228 or Lemma IV.6.9 and Excercise IV.6.10 in Kunen \cite{MR2905394}). To see $(2)\rightarrow (3)$, assume $\{U_n: n\in \omega\}$ is a collection of dense open sets. Our goal is to show $\bigcap_{n\in \omega} U_n$ is dense. Let $t\in T$ be fixed and $G\subset T$ be generic over $V$ that contains $t$. In $V[G]$, define $f: \omega\to G\subset V$ recursively as follows: $f(0)=t$. Given $f(i)$, find $t'\geq f(i)$ in $U_i\cap G$ such that there exists two incompatible immediate extensions of $t'$ in $T$. To see that this is possible, since $f(i)\in G$ and $U_i$ is dense above $f(i)$, there is $t^*\in U_i\cap G$ with $t^*\geq f(i)$. By the non-triviality of $T$, there are $s,s'\geq t^*$ that are incompatible. Let $t'\geq t^*$ be such that it has two incompatible immediate extensions and no $s$ with $t^*\leq s < t'$ has this property. The existence of such $t'$ follows from the fact that $T$ does not split on the limit levels. Note by the openness of $U_i$, $t'\in U_i$. Define $f(i+1)=t'$. Let $\dot{f}$ be a $T$-name for the function $f$ as defined. By the hypothesis, we can find $t_0\geq t$ and $g: \omega\to T$ such that $t_0\Vdash \dot{f}=g$. In particular, we have $g(i+1)\in U_i$ for all $i\in \omega$. If the range of $g$ is finite, we are done. Otherwise, since $t_0\Vdash ``g(i)\in \dot{G}$ for all $i\in \omega$'', it must be the case that $t_0\geq g(i)$ for all $i\in \omega$. To see this, suppose for the sake of contradiction that there is $i>0$ such that $t_0 < g(i)$. Fix some $j>i$ such that $g(j)>g(i)$. As $g(i)$ has two incompatible immediate successors, we can extend $t_0$ to a condition that is incompatible with $g(j)$, contradicting with the fact that $t_0\Vdash g(j)\in \dot{G}$.
\end{remark}

Notice that any Baire tree is nonspecial. To see this, given a function $g: T\to \omega$, $U_n=\{t\in T: \exists t'<_T t \ g(t')=n \vee \forall t''\geq_T t \ g(t'')\neq n\}$ is a dense open subset of $T$ for all $n\in \omega$. If $g$ is a specializing map, then $\bigcap_n U_n =\emptyset$, witnessing that $T$ is not Baire. Hence the following is a statement weaker than $\mathrm{RC}$.

\begin{definition}\label{baireRC}
$\mathrm{RC}^B$ abbreviates the following: any Baire tree has a nonspecial subtree of size $\aleph_1$.
\end{definition}

We can also formulate a slightly stronger principle:
\begin{definition}
$s\mathrm{RC}^B$ abbreviates the following: any Baire tree has a Baire subtree of size $\aleph_1$.
\end{definition}

\begin{definition}\label{BIP}
A poset $P$ is \emph{Baire Indestructibly Proper (BIP)} if $P$ is proper and for any Baire tree $T$, $\Vdash_T$ $\check{P}$ is proper.
\end{definition}

\begin{lemma}[Todor\v{c}evi\'{c} \cite{MR657114}]
$\mathrm{MA}_{\aleph_1}$ is incompatible with $\mathrm{RC}^B$. 
\end{lemma}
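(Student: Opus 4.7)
The plan is to argue by contradiction, assuming both $\mathrm{MA}_{\aleph_1}$ and $\mathrm{RC}^B$ hold. The driving tension is that $\mathrm{MA}_{\aleph_1}$ specializes essentially every tree of cardinality $\aleph_1$ without an $\omega_1$-chain, while $\mathrm{RC}^B$ demands every Baire tree produce a nonspecial such subtree; combining these forces every Baire tree in $V$ to carry a cofinal $\omega_1$-branch, which I then intend to refute by exhibiting a specific Baire tree without one.

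First I would invoke the Baumgartner-style specialization consequence of $\mathrm{MA}_{\aleph_1}$: every tree of cardinality $\aleph_1$ and height $\omega_1$ that has no cofinal $\omega_1$-branch is special. The poset of finite chain-injective partial functions into $\omega$ is ccc on any such tree (a $\Delta$-system argument that uses the absence of an uncountable chain to rule out uncountable pairwise-incompatible families), and $\mathrm{MA}_{\aleph_1}$ meets the $\aleph_1$ dense sets asserting that each fixed $t\in T$ lies in the domain of the generic. With this lemma in hand, if $T$ is any Baire tree with no cofinal $\omega_1$-branch in $V$, then the subtree $T'\subseteq T$ of size $\aleph_1$ provided by $\mathrm{RC}^B$ also lacks a cofinal $\omega_1$-branch (any $\omega_1$-chain in $T'$ would extend uniquely to a cofinal branch of $T$), so the lemma specializes $T'$, contradicting its nonspeciality. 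The proof therefore reduces to producing, under $\mathrm{MA}_{\aleph_1}$ alone, a single Baire tree with no cofinal $\omega_1$-branch in $V$.

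The hard part will be exactly this construction, and it is where I expect the real work to lie. Under $\mathrm{MA}_{\aleph_1}$ there are no Suslin trees and every Aronszajn tree is special, so no Baire tree of cardinality $\aleph_1$ can be branchless; the desired tree must have cardinality at least $\aleph_2$ and will necessarily carry uncountable antichains. The route I would attempt is to take $T$ to be a tree of countable coherent approximations to some combinatorial object of size $\aleph_2$ whose $V$-cofinality exceeds $\omega_1$ (so that no $\omega_1$-cofinal increasing continuous sequence into it exists in $V$), verifying $\omega$-distributivity by closing countable increasing sequences of conditions under their unions while arranging that any putative $\omega_1$-chain in $T$ would encode in $V$ a cofinal $\omega_1$-sequence that the cardinal arithmetic of the $\mathrm{MA}_{\aleph_1}$-universe forbids. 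The delicate tradeoff is that $\mathrm{MA}_{\aleph_1}$ tends to convert natural $\omega$-distributive trees into either branch-rich objects (such as $2^{<\omega_1}$) or specializable ones; the construction must exploit the enlarged continuum and the flexibility of $\mathrm{MA}_{\aleph_1}$'s combinatorial commitments to keep $T$ simultaneously $\omega$-distributive and branchless. Once such a $T$ is produced, the two preceding paragraphs close the argument immediately.
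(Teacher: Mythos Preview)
Your overall strategy is exactly the paper's: invoke the Baumgartner--Malitz--Reinhardt consequence of $\mathrm{MA}_{\aleph_1}$ that every tree of size $\aleph_1$ with no uncountable branch is special, and then produce a Baire tree $T$ with no $\omega_1$-branch, so that the nonspecial $\aleph_1$-sized subtree promised by $\mathrm{RC}^B$ is simultaneously forced to be special. The gap is in what you call ``the hard part'': you are groping for an elaborate construction exploiting the enlarged continuum, but the needed tree is a classical off-the-shelf object requiring no $\mathrm{MA}_{\aleph_1}$ input at all.

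Take any stationary co-stationary $S\subset\omega_1$ and let $T(S)$ be the tree of closed bounded subsets of $S$ ordered by end-extension (equivalently, the standard poset for shooting a club through $S$). This tree is $\omega$-distributive in $\mathrm{ZFC}$ by the usual argument using stationarity of $S$ (Jech, Theorem~23.8), and it has no $\omega_1$-branch because any such branch would union to a club inside $S$, contradicting the stationarity of $\omega_1\setminus S$. That is the entire construction; your worries about $2^{<\omega_1}$ being branch-rich or about needing to encode objects at $\aleph_2$ are unnecessary, and the vague ``countable coherent approximations to some combinatorial object of size $\aleph_2$'' is not needed. (Your observation that the tree must have size $\geq\aleph_2$ under $\mathrm{MA}_{\aleph_1}$ is correct but irrelevant to building it: $T(S)$ automatically has size $2^{\aleph_0}$.)
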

\begin{proof}
Let $S\subset \omega_1$ be stationary co-stationary. 
Consider the tree $T(S)$ defined by the following: $t\in T(S)$ iff $t$ is a closed bounded subset of $S$. The order in $T(S)$ is end extension. $T(S)$ is the standard poset for adding a club subset contained in $S$. By Theorem 23.8 in \cite{MR1940513}, $T(S)$ is Baire. To see that $\mathrm{MA}$ is incompatible with $\mathrm{RC}^B$, simply note that by a theorem of Baumgartner, Malitz and Reinhard \cite{MR0314621}, $\mathrm{MA}_{\aleph_1}$ implies any $\aleph_1$-size subtree of $T(S)$ is special while $\mathrm{RC}^B$ implies there exists a nonspecial subtree of $T(S)$ of size $\aleph_1$.
\end{proof}

One of the motivations of our work is to understand what fragments of the standard forcing axioms are compatible with $\mathrm{RC}^B$. A natural guess is that it should include the ``non-specializing'' fragment. Our main result, Theorem \ref{iterationSacks}, shows that it could even include some ``harmless'' specializing forcings.

The main result of this paper is: 

\begin{theorem}\label{iterationSacks}
Assume the existence of a supercompact cardinal. There exists a  forcing extension where $s\mathrm{RC}^B$ and $\mathrm{MA}_{\omega_1}(\mathrm{BIP})$ both hold.
\end{theorem}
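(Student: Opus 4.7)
The plan is to perform a countable support iteration $\mathbb{P}_\kappa = \langle P_\alpha, \dot{Q}_\alpha : \alpha<\kappa\rangle$ of length the supercompact cardinal $\kappa$, iterating BIP forcings and directed by a Laver function, in analogy with the standard $\mathrm{PFA}$ iteration but restricted to the BIP class. After a preliminary preparation, fix a Laver function $\ell:\kappa\to V_\kappa$. At stage $\alpha$, if $\ell(\alpha)$ is a $P_\alpha$-name for a BIP forcing of hereditary size $<\kappa$, let $\dot{Q}_\alpha$ be this name; otherwise let $\dot{Q}_\alpha$ be a $\sigma$-closed collapse (also BIP) arranged via bookkeeping so that every cardinal in $(\omega_1,\kappa)$ is eventually collapsed. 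Since each iterand is proper, Shelah's preservation theorem gives that $\mathbb{P}_\kappa$ is proper, hence preserves $\omega_1$; since $\kappa$ is inaccessible and each iterand is $<\kappa$-sized, standard arguments yield $|\mathbb{P}_\kappa|=\kappa$ and $\mathbb{P}_\kappa$ is $\kappa$-cc, so $\kappa=\omega_2^{V[G_\kappa]}$.

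For $\mathrm{MA}_{\omega_1}(\mathrm{BIP})$ I would run the standard argument: given in $V[G_\kappa]$ a BIP forcing $Q$ of size $\aleph_1$ together with $\aleph_1$ many dense subsets, by $\kappa$-cc everything has a name in some $V[G_\alpha]$, and Laver guessing ensures $\dot{Q}_\beta$ is forced to be $Q$ at some $\beta>\alpha$, producing the required generic filter. This step is essentially the usual iteration lemma for forcing axioms and should go through with no new ideas.

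The substantive content lies in deriving $s\mathrm{RC}^B$. Let $T\in V[G_\kappa]$ be a Baire tree; without loss $|T|^{V[G_\kappa]}\geq\aleph_2$, else $T$ itself suffices. Set $\lambda=|T|^+$ and fix $j:V\to M$ witnessing $\lambda$-supercompactness of $\kappa$, so $\mathrm{crit}(j)=\kappa$, $j(\kappa)>\lambda$, and $M^\lambda\subseteq M$. Factor $j(\mathbb{P}_\kappa)=\mathbb{P}_\kappa*\dot{\mathbb{P}}_{\mathrm{tail}}$, where $\dot{\mathbb{P}}_{\mathrm{tail}}$ is in $M[G_\kappa]$ a countable support iteration of BIP forcings of length $j(\kappa)$ from $\kappa$ onward. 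Force with $\mathbb{P}_{\mathrm{tail}}$ over $V[G_\kappa]$ to obtain $H$, and lift $j$ to $\tilde{j}:V[G_\kappa]\to M[G_\kappa*H]$ via $\tilde{j}(\dot{x}[G_\kappa])=j(\dot{x})[G_\kappa*H]$; here $j``G_\kappa\subseteq G_\kappa*H$ is automatic as $j$ fixes the initial segment. Because the tail iteration collapses $\lambda$ to have cardinality $\aleph_1$ in $M[G_\kappa*H]$, the subtree $T$ of $\tilde{j}(T)$ has size $\aleph_1^{M[G_\kappa*H]}=\aleph_1^{V[G_\kappa]}$ there. Granting that $T$ remains Baire in $M[G_\kappa*H]$, the model $M[G_\kappa*H]$ satisfies ``$\tilde{j}(T)$ has a Baire subtree of size $\aleph_1$,'' which reflects by elementarity to the desired conclusion in $V[G_\kappa]$.

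The main obstacle is the preservation lemma implicit in the last paragraph: that countable support iterations of BIP forcings preserve Baireness of trees. The two-step case — if $P$ is BIP in $V$ and $T$ is Baire in $V$, then $T$ is still Baire in $V[P]$ — should be attacked via the commutation $V[P][T]=V[T][P]$ together with the BIP hypothesis that $P$ is proper in $V[T]$, reducing the question to showing that every real of $V[T][P]$ lies already in $V[P]$. Propagating this through the iteration will require a Shelah-type preservation theorem in the style of the preservation of ``no new reals'' or of semiproperness, adapted to the Baire-tree setting; I expect this iteration lemma to be the technical heart of the theorem, and the right formulation may well be what motivates the precise definition of BIP in the first place.
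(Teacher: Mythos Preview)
Your outline is correct and follows the paper's approach exactly: a Laver-guided countable support iteration of BIP forcings of length $\kappa$, with $\mathrm{MA}_{\omega_1}(\mathrm{BIP})$ obtained in the standard way and $s\mathrm{RC}^B$ derived by lifting a $\lambda$-supercompact embedding and reflecting the Baire subtree $j''\theta\simeq T$ of $j(T)$ back to $V[G]$. You have also correctly isolated the crux --- that countable support iterations of BIP forcings preserve Baire trees --- and the paper proves precisely this (its Corollary~\ref{BIPBaire}) via a Key Lemma that reduces properness of $P_\alpha$ in $V^T$ to \emph{semi-strong} properness of $P_\alpha$ in $V$ relative to the traces $D\cap M$ of the relevant dense sets, then invokes a Shelah-style extension lemma for iterations of semi-strongly proper forcings; so the Shelah-type preservation you anticipated is exactly what is used.
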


Since $\mathrm{MA}_{\omega_1}(\mathrm{BIP})$ implies the failure of $\mathrm{RC}$ (see Lemma \ref{BIPimpliesNotRC}), we have

\begin{cor}\label{separate}
$s\mathrm{RC}^B$ in general does not imply $\mathrm{RC}$.
\end{cor}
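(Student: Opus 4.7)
The plan is to derive the corollary as a direct logical consequence of Theorem \ref{iterationSacks} together with Lemma \ref{BIPimpliesNotRC} (the auxiliary lemma already advertised in the text preceding the corollary, which asserts $\mathrm{MA}_{\omega_1}(\mathrm{BIP}) \Rightarrow \neg \mathrm{RC}$). First I would invoke Theorem \ref{iterationSacks} to fix a forcing extension $V[G]$ in which $s\mathrm{RC}^B$ and $\mathrm{MA}_{\omega_1}(\mathrm{BIP})$ hold simultaneously. Second, working inside $V[G]$, apply Lemma \ref{BIPimpliesNotRC} to conclude that $\mathrm{RC}$ fails in $V[G]$. The resulting model is then a $\mathrm{ZFC}$-model satisfying $s\mathrm{RC}^B + \neg \mathrm{RC}$, which is exactly what ``$s\mathrm{RC}^B$ does not imply $\mathrm{RC}$'' means.

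Strictly speaking, the corollary is a two-line deduction from the two black boxes and presents no internal obstacle. The substantive content is packaged into the two ingredients. Theorem \ref{iterationSacks} is the engine: it requires designing a countable-support iteration, over a model with a supercompact cardinal, of forcings lying in the $\mathrm{BIP}$ class, such that the iteration (i) forces $\mathrm{MA}_{\omega_1}(\mathrm{BIP})$ by the usual bookkeeping, and (ii) preserves the statement ``every Baire tree has a Baire subtree of size $\aleph_1$'' — i.e., preserves $s\mathrm{RC}^B$ along the iteration. The complementary input, Lemma \ref{BIPimpliesNotRC}, has to produce a concrete $\mathrm{BIP}$ poset of size $\aleph_1$ whose forcing axiom contradicts the reflection content of $\mathrm{RC}$, thereby making $\mathrm{MA}_{\omega_1}(\mathrm{BIP})$ and $\mathrm{RC}$ outright incompatible.

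The step I would flag as the real obstacle — outside the corollary itself — is the preservation verification inside Theorem \ref{iterationSacks}: one must show that the iterands, being $\mathrm{BIP}$, interact harmlessly with Baire trees, so that no step of the iteration destroys Baireness of an existing subtree or specializes an $\aleph_1$-subtree we are trying to maintain. Once that preservation is in hand, together with the standard supercompact bookkeeping for forcing axiom fragments, the model for Theorem \ref{iterationSacks} is delivered, Lemma \ref{BIPimpliesNotRC} finishes the job, and Corollary \ref{separate} follows by a single logical step.
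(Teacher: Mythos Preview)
Your proposal is correct and exactly matches the derivation the paper sketches in the introduction immediately before the corollary: Theorem \ref{iterationSacks} produces a model of $s\mathrm{RC}^B + \mathrm{MA}_{\omega_1}(\mathrm{BIP})$, and Lemma \ref{BIPimpliesNotRC} then gives $\neg \mathrm{RC}$ in that model.

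That said, the paper's \emph{labeled} proof of Corollary \ref{separate} (in Section \ref{separation}) takes a genuinely different route, independent of Theorem \ref{iterationSacks}. There the author builds a Mitchell-style mixed-support forcing $Q$ whose c.c.c.\ part is a length-$\kappa$ finite-support iteration of the Baumgartner specializing posets $S(\sigma(\mathbb{R}))$, interleaved with $\mathrm{Add}(\omega_1,1)$ on the countably-closed side. The key step is Lemma \ref{indestructible}: forcing with any Baire tree keeps this c.c.c.\ iteration c.c.c., so by Lemma \ref{capturing} the full poset $Q$ preserves Baire trees; a supercompact reflection argument then yields $s\mathrm{RC}^B$ directly, while every $\aleph_1$-subtree of $\sigma(\mathbb{R})$ is special by construction. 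Your approach is shorter given the two black boxes and yields the stronger model $s\mathrm{RC}^B + \mathrm{MA}_{\omega_1}(\mathrm{BIP})$; the Section \ref{separation} approach is more self-contained, avoids the BIP iteration machinery entirely, and produces what the paper calls a model with a ``minimal'' flavor.
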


We are also interested in the influence of Rado's Conjecture on singular cardinal combinatorics, stationary reflection principles and polarized partition relations. 

\begin{definition}
For ordinals $\alpha, \beta$, let $\{\alpha\}^\beta$ denote $\{A\subset \alpha: \mathrm{otp}(A)=\beta\}$.
\end{definition}

\begin{definition}
$\begin{pmatrix}
\alpha \\
\beta
\end{pmatrix} \to \begin{pmatrix}
\gamma \\
\delta
\end{pmatrix}^{1,1}_{\sigma}$ abbreviates: for any $f: \alpha\times \beta \to \sigma$, there exists $A\in \{\alpha\}^\gamma$ and $B\in \{\beta\}^\delta$, such that $f\restriction A\times B$ is constant.
\end{definition}

\begin{definition}
Fix a cardinal $\kappa$ and a set $X$.
$P_\kappa X$ is defined to be $\{x\subset X: |x|<\kappa\}$. 
A subset $C\subset P_\kappa X$ is a \emph{closed unbounded} (or \emph{club}) subset of $P_\kappa X$ if there exists a function $f: P_\omega X \to P_\kappa X$ such that $C\supset Cl_f=_{def} \{x\in P_\kappa X: \forall z\in P_\omega x \ f(z)\subset x\}$. A subset $S\subset P_\kappa X$ is \emph{stationary} if for any club subset $C\subset P_\kappa X$, $S\cap C\neq \emptyset$.
\end{definition}

For more information on generalized stationarity, see Chapter 8 of \cite{MR1940513} or Lemma 0 of \cite{MR924672}.

\begin{definition}[\cite{MR924672}]\label{WRP}
For any regular $\lambda\geq \omega_2$, the \emph{Weak Reflection Principle} at $\lambda$, or $\mathrm{WRP}(\lambda)$, refers to the following principle: for any stationary $S\subset P_{\omega_1}\lambda$, there exists $W\subset \lambda$ such that $|W|=\aleph_1$, $\omega_1\subset W$ and $S\cap P_{\omega_1}W$ is stationary in $P_{\omega_1}W$. We use $\mathrm{WRP}$ to denote the following global principle: for any regular $\lambda\geq \omega_2$, $\mathrm{WRP}(\lambda)$.
\end{definition}

The paper is organized as follows: 
\begin{itemize}

\item In Section \ref{answer}, we sketch the proof that $\mathrm{RC}$ holds in the classical Mitchell model for the tree property. This model witnesses that $\mathrm{RC}+\neg \mathrm{CH}$ does not imply $\omega_2$ has the super tree property.
\item In Section \ref{separation}, we present a model via a mixed-support iteration where $s\mathrm{RC}^B$ holds but $\mathrm{RC}$ fails.

\item In Section \ref{iterate}, we prove Theorem \ref{iterationSacks}.
\item In Section \ref{strength}, we present some streamlined  proofs of known consequences of $\mathrm{RC}^B$ and show that $\mathrm{RC}^B$ in general is compatible with failures of certain simultaneous stationary reflection principles and some versions of weak square principles.
\item In Section \ref{polarized}, we show 
$\begin{pmatrix}
\omega_2 \\
\omega_1
\end{pmatrix} \to \begin{pmatrix}
\omega \\
\omega
\end{pmatrix}^{1,1}_{\omega}$ and 
$\begin{pmatrix}
\omega_2 \\
\omega_1
\end{pmatrix} \to \begin{pmatrix}
k \\
\omega_1
\end{pmatrix}^{1,1}_{\omega}$ for any $k\in \omega$ hold under a weak consequence of $\mathrm{RC}$, while it is consistent that $\mathrm{RC}$ holds but $\begin{pmatrix}
\omega_2 \\
\omega_1
\end{pmatrix} \to \begin{pmatrix}
\omega \\
\omega_1
\end{pmatrix}^{1,1}_{\omega}$ fails.

\end{itemize}

We end the introduction by including a simple lemma characterizing forcings that preserve $\omega$-distributivity, which is a variant of the well-known Easton's Lemma in the context of forcing with products.

\begin{definition}
A poset $\mathbb{P}$ is \emph{countably capturing} if for any $p\in \mathbb{P}$ and any $\mathbb{P}$-name $\dot{\tau}$ of a countable sequence of ordinals, there exists a $\mathbb{P}$-name $\dot{\sigma}$ such that $|\dot{\sigma}|\leq \aleph_0$, and there is $q\leq p$ such that 
$q\Vdash_{\mathbb{P}} \dot{\tau}=\dot{\sigma}$. 
\end{definition}

\begin{remark}\label{niceForm}
Here we think of each given $\mathbb{P}$-name $\dot{\tau}$ as represented by a function $f_{\dot{\tau}}$ whose domain is $\omega$ such that for each $n\in \omega$, $f_{\dot{\tau}}(n)=\{(\alpha_p, p): p\in A_n\}$ where $A_n$ is some maximal antichain chain of $\mathbb{P}$ such that for each $p\in A_n$, $p\Vdash_{\mathbb{P}} \dot{\tau}(n)=\alpha_p$. 
By saying $|\dot{\sigma}|\leq \aleph_0$, we really mean there exist antichains $B_n \in [\mathbb{P}]^{\leq \aleph_0}$ for $n\in \omega$ such that $\dot{\sigma}$ is represented by the function $n\in \omega\mapsto \{(\alpha_p, p): p\in B_n\}$. More explicitly, we mean that for any generic $G\subset \mathbb{P}$ over $V$, $(\dot{\sigma})^G$ is a partial function on $\omega$ such that for each $n\in \omega$, if there exists $p\in G\cap B_n$ (necessarily unique), then $(\dot{\sigma})^G(n)=\alpha_p$; otherwise $(\dot{\sigma})^G(n)$ is undefined.
\end{remark}

We may assume all the names for a countable sequence of ordinals in the following are represented as described in Remark \ref{niceForm}.

\begin{remark}\label{remarkProper}
Any proper forcing is countably capturing. To see this, let $p\in \mathbb{P}$ and a $\mathbb{P}$-name for a countable sequence of ordinals $\dot{\tau}$ be given. Let $\lambda$ be a sufficiently large regular cardinal and let $M\prec H(\lambda)$ be countable and contain $\mathbb{P}, p, \dot{\tau}$. By properness, we can find $q\leq p$ that is $(M,\mathbb{P})$-generic. Let $\dot{\sigma}=\dot{\tau}\cap M$. Then $|\dot{\sigma}|\leq \aleph_0$ and $q\Vdash_{\mathbb{P}} \dot{\sigma}=\dot{\tau}$.
\end{remark}

\begin{lemma}\label{capturing}
Let $\mathbb{P}$ be countably capturing and $\mathbb{Q}$ be $\omega$-distributive. Then TFAE:
\begin{enumerate}
\item $\Vdash_{\mathbb{P}} \check{\mathbb{Q}}$ is $\omega$-distributive
\item $\Vdash_{\mathbb{Q}} \check{\mathbb{P}}$ is countably capturing.
\end{enumerate}
\end{lemma}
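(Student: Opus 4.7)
The plan is to adapt the classical Easton's-Lemma argument, moving back and forth between two-step extensions and the product forcing $\mathbb{P}\times\mathbb{Q}$, and using the observation (licensed by Remark \ref{niceForm}) that a countable $\mathbb{P}$-name is the same object as a function $\omega\to V$. This second point is what lets $\omega$-distributivity act at the level of names rather than merely at the level of sequences of ordinals.

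For $(2)\Rightarrow(1)$, I would fix $H\subseteq\mathbb{Q}$ $V$-generic and $G\subseteq\mathbb{P}$ $V[H]$-generic, and aim to show that every countable sequence $x\in V[G][H]$ of ordinals already lies in $V[G]$. Let $\dot{x}\in V[H]$ be a $\mathbb{P}$-name for $x$. By (2), $\mathbb{P}$ is countably capturing in $V[H]$, so I obtain $p\in G$ and a countable $\mathbb{P}$-name $\dot{\sigma}\in V[H]$ with $p\Vdash^{V[H]}_{\mathbb{P}}\dot{\sigma}=\dot{x}$. Put into the canonical form of Remark \ref{niceForm}, $\dot{\sigma}$ is coded by a function $\omega\to V$; by $\omega$-distributivity of $\mathbb{Q}$ over $V$, no such new function exists (bound the rank of its range in $V$ by an $\omega$-sequence of ordinals, then apply a ground-model well-ordering of the resulting $V_{\alpha}$), so $\dot{\sigma}\in V$ and therefore $x=\dot{\sigma}^{G}\in V[G]$.

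For $(1)\Rightarrow(2)$, I would fix $H\subseteq\mathbb{Q}$ $V$-generic and work in $V[H]$ with given $p\in\mathbb{P}$ and a $\mathbb{P}$-name $\dot{\tau}$ for a countable sequence of ordinals. Pick $\dot{T}\in V$ a $\mathbb{Q}$-name with $\dot{T}^{H}=\dot{\tau}$, and regard $\mathbb{P}\ast\check{\mathbb{Q}}$ as the product $\mathbb{P}\times\mathbb{Q}$ in $V$. The key claim is that in $V$ the set
\[
D=\{(p',q')\in\mathbb{P}\times\mathbb{Q}:\exists\,\dot{\sigma}\in V^{\mathbb{P}}\text{ countable, }(p',q')\Vdash_{\mathbb{P}\times\mathbb{Q}}\dot{\sigma}^{\dot{G}}=(\dot{T}^{\dot{H}})^{\dot{G}}\}
\]
is dense below $(p,q)$ for every $q\in\mathbb{Q}$. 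Given $(p',q')\le(p,q)$, take any $V$-generic $(G,H')\ni(p',q')$: by (1) applied inside $V[G]$, $\mathbb{Q}$ is $\omega$-distributive there, so $(\dot{T}^{H'})^{G}\in V[G]$, yielding $\dot{\rho}\in V^{\mathbb{P}}$ and some $(p'',q'')\le(p',q')$ in $G\times H'$ forcing $\dot{\rho}^{\dot{G}}=(\dot{T}^{\dot{H}})^{\dot{G}}$. Countable capturing of $\mathbb{P}$ in $V$ then refines $\dot{\rho}$ below some $p^{**}\le p''$ to a countable $\dot{\sigma}$, placing $(p^{**},q'')$ in $D$. The projection of $D$ to $\mathbb{Q}$ is thus dense, so meets $H$ at some $q^{*}$, with a witnessing $p^{**}\le p$ and countable $\dot{\sigma}\in V\subseteq V[H]$. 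Rewriting the product-forcing statement as a two-step one, $p^{**}\Vdash^{V[H]}_{\mathbb{P}}\dot{\sigma}=\dot{\tau}$, which is exactly what countable capturing of $\mathbb{P}$ in $V[H]$ demands.

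The main technical hurdle is keeping the bookkeeping straight between names and evaluations, specifically translating product-forcing assertions ``$(\dot{T}^{\dot{H}})^{\dot{G}}\in V[\dot{G}]$'' into two-step assertions ``$p^{**}\Vdash^{V[H]}_{\mathbb{P}}\dot{\sigma}=\dot{T}^{H}$'' holding inside $V[H]$. Both translations are routine given the interchangeability of $\mathbb{P}\times\mathbb{Q}$ and $\mathbb{P}\ast\check{\mathbb{Q}}$, but they are precisely where the canonical form of Remark \ref{niceForm} earns its keep.
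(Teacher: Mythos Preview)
Your proof is correct and follows essentially the same approach as the paper's: both directions use the Product Lemma to pass between $V[G][H]$ and $V[H][G]$, with $(2)\Rightarrow(1)$ using $\omega$-distributivity to pull the countable name $\dot{\sigma}$ back into $V$, and $(1)\Rightarrow(2)$ first finding a (possibly large) $\mathbb{P}$-name in $V$ for the sequence and then applying countable capturing of $\mathbb{P}$ in $V$ to shrink it. The only cosmetic differences are that the paper argues $(1)\Rightarrow(2)$ by picking a single generic and invoking $\Delta_0$-absoluteness of forcing statements between $V$ and $V[H]$, whereas you phrase it as a density argument in $\mathbb{P}\times\mathbb{Q}$; one small slip is that your set $D$ as written does not require $p'\le p$, so when you project to $\mathbb{Q}$ and meet $H$ you should really be projecting $D\cap\big((\mathbb{P}{\downarrow}p)\times\mathbb{Q}\big)$ to guarantee the witnessing $p^{**}\le p$ --- but this is immediate from your ``dense below $(p,q)$ for every $q$'' claim.
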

\begin{proof}
\begin{itemize}
\item  2) implies 1): Let $G\times H$ be generic for $\mathbb{P}\times \mathbb{Q}$ and let $\dot{\tau}$ be a $(\mathbb{P}\times \mathbb{Q})$-name of a countable sequence of ordinals. We need to show $(\dot{\tau})^{G\times H}$ is in $V[G]$.
Since $\Vdash_\mathbb{Q} \mathbb{P}$ is countably capturing, in $V[H]$ there exists a $\mathbb{P}$-name $\dot{\sigma}$ with $|\dot{\sigma}|\leq \aleph_0$ such that in $V[H][G]$, $(\dot{\tau})^{H\times G}=(\dot{\sigma})^G$. Since $\mathbb{Q}$ is $\omega$-distributive, $\dot{\sigma}\in V$ . But then $(\dot{\tau})^{H\times G}=(\dot{\sigma})^{G}\in V[G]$.
\item  1) implies 2): Let $H$ be $\mathbb{Q}$-generic, we need to show $\mathbb{P}$ is countably capturing in $V[H]$. Let $\dot{\tau}$ be a $(\mathbb{Q}\times \mathbb{P})$-name for a countable sequence of ordinals. We can canonically identify $(\dot{\tau})^H$ as a $\mathbb{P}$-name for a countable sequence of ordinals in $V[H]$. Let $p\in \mathbb{P}$ be given. Let $G\subset \mathbb{P}$ containing $p$ be generic over $V[H]$. In $V[H\times G]$, by the assumption and the Product Lemma (see for example Theorem V.1.2 in \cite{MR2905394}, page 315), we know $(\dot{\tau})^{H\times G}\in V[G]$. Hence there exists a $\mathbb{P}$-name $\dot{\sigma}\in V$ such that $(\dot{\sigma})^G=(\dot{\tau})^{H\times G}$ in $V[G\times H]$. Find $q\in G, q\leq_{\mathbb{P}} p$ that forces $\dot{\sigma}=(\dot{\tau})^H$ in $V[H]$. In $V[H]$, $q\Vdash_{\mathbb{P}} ``\dot{\sigma}$ is a countable sequence of ordinals''. As ``$q\Vdash_{\mathbb{P}} \dot{\sigma}$ is a countable sequence of ordinals'' is a $\Delta_0$-statement, it also holds in $V$. By the fact that $\mathbb{P}$ is countably capturing in $V$, we can find $q'\leq_{\mathbb{P}} q$ and a $\mathbb{P}$-name $\dot{\varphi}$ such that $|\dot{\varphi}|\leq \aleph_0$ and $q'\Vdash_{\mathbb{P}} \dot{\varphi}=\dot{\sigma}$. Again since ``$q'\Vdash_{\mathbb{P}} \dot{\varphi}=\dot{\sigma}$'' is a $\Delta_0$-statement, it also holds in $V[H]$. Finally in $V[H]$, we have found $q'\leq_{\mathbb{P}} p$ such that $q'\Vdash_{\mathbb{P}} \dot{\varphi}=\dot{\sigma}=(\dot{\tau})^{H}$ and $|\dot{\varphi}|\leq \aleph_0$.
\end{itemize}
\end{proof}

We record the Pressing Down Lemma for trees due to Todor\v{c}evi\'{c} for later use.
\begin{theorem}[Todor\v{c}evi\'{c}, \cite{MR657114}]\label{PressingDownTree}
Fix a nonspecial tree $T$ and a \emph{regressive} function $f: T\to T$, namely for all $t\in T\backslash \{\mathrm{root}_T\}$, we have $f(t)<_T t$. Then there exists a nonspecial subtree $T'\subset T$ and $s\in T$ such that $f$ takes constant value $s$ on $T'$.
\end{theorem}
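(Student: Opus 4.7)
My plan is to argue the contrapositive: assume no nonspecial subtree on which $f$ is constant exists, and show that $T$ itself must be special, contradicting the hypothesis. Under this assumption each fiber $T_s := \{t \in T : f(t) = s\}$, viewed with the induced tree order, is a subtree of $T$ on which $f$ is constant, and hence must be special; I fix a specializing map $h_s \colon T_s \to \omega$ for every $s \in T$.

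The heart of the argument will be to stitch the $h_s$'s into a single specializing map $g \colon T \to \omega$. For each $t \in T$, iterating $f$ yields a strictly decreasing sequence in the well-ordered set of predecessors of $t$, so it is finite; let $n(t)$ be the least $n$ with $f^n(t) = \mathrm{root}_T$. Fixing a bijective coding of $\omega^{<\omega}$ with $\omega$, I set $g(t)$ to be the code of
\[
\bigl(n(t),\ h_{f(t)}(t),\ h_{f^2(t)}(f(t)),\ \ldots,\ h_{f^{n(t)}(t)}\bigl(f^{n(t)-1}(t)\bigr)\bigr).
\]
To check injectivity on chains, I would take $t <_T t'$ with $g(t) = g(t')$ and derive a contradiction: agreement in the first coordinate gives $n := n(t) = n(t')$, and since $f^0(t) \neq f^0(t')$ while $f^n(t) = \mathrm{root}_T = f^n(t')$, there is a least $k^* \in \{1,\ldots,n\}$ with $f^{k^*}(t) = f^{k^*}(t') =: s$. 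Both $f^{k^*-1}(t)$ and $f^{k^*-1}(t')$ then lie in $T_s$ and are distinct by minimality of $k^*$, and both are $\leq_T t'$, hence comparable in the well-ordered chain of predecessors of $t'$; so $h_s$ must separate them, contradicting equality of the $(k^*{+}1)$-st coordinate of $g(t)$ and $g(t')$.

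The main subtlety I anticipate is precisely this last comparability step: one has to ensure that the two iterates $f^{k^*-1}(t)$ and $f^{k^*-1}(t')$ lie on a common chain so that a single specializing map $h_s$ can distinguish them. The hypothesis $t <_T t'$ is used exactly here, placing both elements below the common upper bound $t'$. All remaining steps — the finiteness of the $f$-orbit, the existence of $k^*$, and the coding — are routine bookkeeping.
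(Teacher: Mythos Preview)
Your argument is correct. The paper itself does not supply a proof of this statement; it merely records it as Todor\v{c}evi\'{c}'s theorem and cites \cite{MR657114}. Your contrapositive approach --- assuming each fiber $T_s$ is special, fixing specializing maps $h_s$, and coding for each $t$ the finite $f$-orbit data $(n(t), h_{f(t)}(t), h_{f^2(t)}(f(t)),\ldots)$ --- is in fact the classical proof. The key point, which you correctly identified and handled, is that when $t <_T t'$ the relevant iterates $f^{k^*-1}(t)$ and $f^{k^*-1}(t')$ both lie below $t'$ and are therefore comparable, so that a single specializing map $h_s$ of the common fiber separates them.
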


\section{$\mathrm{RC}+\neg \mathrm{CH}$ does not imply the super tree property at $\omega_2$}\label{answer}

Fix cardinals $\kappa\leq \lambda$. Recall the following definitions (see \cite{MR2838054} for instance).

\begin{definition}
$\langle d_a: a\in P_\kappa \lambda\rangle$ is a \emph{$P_\kappa\lambda$-list} if $d_a\subset a$ for all $a\in P_\kappa\lambda$.
\end{definition}

\begin{definition}
A $P_\kappa\lambda$-list $\langle d_a: a\in P_\kappa \lambda\rangle$ is \emph{thin} if there exists a club $C\subset P_\kappa\lambda$ such that $|\{d_a\cap c: c\subset a\in P_\kappa\lambda\}|<\kappa$ for every $c\in C$.
\end{definition}

\begin{definition}
Given $P_\kappa\lambda$-list $D=\langle d_a: a\in P_\kappa \lambda\rangle$ and $d\subset \lambda$, we say $d$ is an \emph{ineffable branch} of $D$ if there exists a stationary subset $S\subset P_\kappa\lambda$ such that $d\cap a = d_a$ for all $a\in S$.
\end{definition}

\begin{definition}
Let $\mathrm{ITP}(\kappa,\lambda)$ abbreviate: for any thin $P_\kappa\lambda$-list $D$, there exists an ineffable branch of $D$.
\end{definition}

\begin{definition}
We say $\kappa$ has the \emph{super tree property} if for any $\lambda\geq \kappa$, $\mathrm{ITP}(\kappa,\lambda)$ holds.
\end{definition}

Magidor proved in \cite{MR0327518} that
$\kappa$ is supercompact iff $\kappa$ is inaccessible and has the super tree property.

We show in this section using a model due to Mitchell \cite{MR0313057} that $\mathrm{RC} + \neg \mathrm{CH}$ does not imply the super tree tree property at $\omega_2$, answering a question of Torres-P\'erez and Wu \cite{MR3600760}. We heavily rely on Viale and Wei\ss's analysis in \cite{MR2838054}.

\subsection{Proof that $\mathrm{RC}$ holds in the Mitchell's model}\label{MitchellRC1}

We give a proof that $\mathrm{RC}$ holds in Mitchell's classical model of the tree property \cite{MR0313057}. This is due to Todor\v{c}evi\'{c}, who in \cite{MR1261218} pointed out the model works. In particular, this shows that $\mathrm{RC}$ is compatible with $2^\omega=\omega_2$. For completeness, we supply a proof here.

Let $\kappa$ be a strongly compact cardinal.
Let $\mathbb{M}_\kappa$ denote the Mitchell forcing, which consists of pairs $(p,f)$ where $p\in \mathrm{Add}(\omega,\kappa)$ and $f$ is a countably supported function on $\kappa$ such that for each $\alpha\in \kappa$, $f(\alpha)$ is an $\mathrm{Add}(\omega,\alpha)$-name for an element in $\mathrm{Add}(\omega_1, 1)^{V^{\mathrm{Add}(\omega,\alpha)}}$. The order $\leq$ in $\mathbb{M}_\kappa$ is defined as $(q,g)\leq (p,f)$ iff $q\supset p$ and for each $\alpha\in \mathrm{supp}(f)$, $q\restriction \alpha \Vdash_{\mathrm{Add}(\omega,\alpha)} g(\alpha)\leq_{\mathrm{Add}(\omega_1,1)} f(\alpha)$.

Let $R$ be the term poset. More precisely, conditions in $R$ are countably supported functions $f$ with domain $\kappa$ such that for each $\alpha\in \mathrm{supp}(f)$, $f(\alpha)$ is an $\mathrm{Add}(\omega,\alpha)$-name for an element in $\mathrm{Add}(\omega_1,1)^{V^{\mathrm{Add}(\omega,\alpha)}}$. For $f,g\in R$, $f\leq_R g$ iff $\mathrm{supp}(f)\supset \mathrm{supp}(g)$ and for each $\alpha\in \mathrm{supp}(g)$, $\Vdash_{\mathrm{Add}(\omega,\alpha)} f(\alpha)\leq g(\alpha)$. Observe that $R$ is countably closed.

We list a few well-known properties of the Mitchell poset, whose proofs can be found in \cite{MR0313057}:

\begin{enumerate}
\item $\mathbb{M}_\kappa$ is $\kappa$-c.c;
\item $\mathbb{M}_\kappa$ is a projection of $\mathrm{Add}(\omega,\kappa)\times R$ which is proper, so in particular $\mathbb{M}_\kappa$ is proper; 
\item for each inaccessible $\delta<\kappa$, we can truncate $\mathbb{M}_\kappa$ at $\delta$ to get $\mathbb{M}_\delta$. For any $G\subset \mathbb{M}_\delta$ generic over $V$, in $V[G]$, $\mathbb{M}_\kappa/G$ is forcing equivalent to a projection of $\mathrm{Add}(\omega, \kappa)\times R^*$, where $R^*$ is a countably closed poset.
\end{enumerate}

We need the following two general facts regarding non-specializing forcings.

\begin{claim}[\cite{MR686495}, Lemma 12]\label{notspecializedclosed}
No countably closed forcing can specialize a nonspecial tree.
\end{claim}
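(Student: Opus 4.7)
My approach is to suppose, toward a contradiction, that a countably closed poset $\mathbb{Q}$ specializes $T$: fix $p\in\mathbb{Q}$ and a $\mathbb{Q}$-name $\dot{f}$ with $p\Vdash \dot{f}:\check{T}\to\omega$ injective on chains. From these data I will manufacture, inside $V$, a specialization $g: T\to\omega$, directly contradicting the nonspeciality of $T$.

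The core of the argument is a recursion on the height of nodes of $T$ that builds a family $(p_t,n_t)_{t\in T}$ satisfying $p_t\leq p$, $p_t\Vdash \dot{f}(t)=n_t$, and --- crucially --- $p_t\leq_{\mathbb{Q}} p_s$ whenever $s<_T t$, so that the $p_t$'s decrease along every chain. At the root, take any $p_{\mathrm{root}_T}\leq p$ deciding $\dot{f}(\mathrm{root}_T)$. At a successor height, given $p_s$, for each immediate successor $t$ of $s$ pick some $p_t\leq p_s$ deciding $\dot{f}(t)$; sibling choices are independent of one another. At a limit height $\alpha<\omega_1$ and a node $t$ at height $\alpha$, the sequence $(p_s)_{s<_T t}$ is descending and indexed by a countable ordinal, hence has cofinality $\omega$; a cofinal $\omega$-subsequence admits a lower bound by countable closure, and any such lower bound is automatically below every $p_s$ with $s<_T t$. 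Extend this lower bound to decide $\dot{f}(t)$ to obtain $p_t$ and $n_t$.

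Setting $g(t):=n_t$, injectivity on chains is then immediate: for $s<_T t$ one has $p_t\leq p_s$, so the single condition $p_t$ forces both $\dot{f}(s)=n_s$ and $\dot{f}(t)=n_t$; since $p_t$ also forces $\dot{f}$ to be injective on the chain $\{s,t\}$, we must have $n_s\neq n_t$. Thus $g$ specializes $T$ in $V$, contradicting the assumption that $T$ is nonspecial.

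The only delicate point is the limit step of the recursion, which calls for lower bounds of descending chains of arbitrary countable order type rather than merely of type $\omega$; this is exactly what countable closure supplies, by the standard observation that every countable limit ordinal has cofinality $\omega$ and that a lower bound for a cofinal subsequence is a lower bound for the whole chain. Everything else is routine: density gives an extension deciding each $\dot{f}(t)$, and AC handles the many independent choices at each level.
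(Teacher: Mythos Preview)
Your argument is correct and is essentially the standard proof of this fact. Note, however, that the paper does not supply its own proof of this claim: it is stated with a citation to Todor\v{c}evi\'{c} \cite{MR686495}, Lemma 12, and no argument is given in the text. There is therefore nothing in the paper to compare your approach against; your recursion along chains, using countable closure at limit heights to find a lower bound for $(p_s)_{s<_T t}$ and then deciding $\dot f(t)$, is exactly the expected proof.
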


\begin{claim}\label{notspecializedcohen}
No Cohen forcing can specialize a nonspecial tree.
\end{claim}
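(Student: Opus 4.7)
My approach is the contrapositive: given a Cohen name $\dot g$ with $\emptyset \Vdash_{\mathbb{C}} \dot g\colon T \to \omega$ injective on chains, I want to produce a specialization of $T$ in $V$. The starting move is to use density to pick, for each $t \in T$, a pair $(p_t, n_t) \in \mathbb{C} \times \omega$ with $p_t \Vdash_{\mathbb{C}} \dot g(t) = n_t$, where $\mathbb{C} = \mathrm{Add}(\omega,\kappa)$. The central observation is that whenever $s <_T t$ and $p_s, p_t$ are compatible in $\mathbb{C}$, any common extension forces both $\dot g(s) = n_s$ and $\dot g(t) = n_t$, so injectivity of $\dot g$ on chains forces $n_s \neq n_t$.

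In the case $\mathbb{C} = \mathrm{Add}(\omega, 1)$ the poset is countable, so $h(t) = (p_t, n_t)$ lands in a countable set. It is automatically injective on chains: if $h(s) = h(t) = (p,n)$ for $s <_T t$, then $p$ is trivially compatible with itself, and the observation yields $n \neq n$, a contradiction. After recoding the countable codomain into $\omega$, $h$ witnesses that $T$ is special in $V$, contradicting nonspeciality.

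For the general case $\mathbb{C} = \mathrm{Add}(\omega, \kappa)$, I would first reduce to $\kappa = \omega_1$ using that $|T| = \omega_1$ and $\mathbb{C}$ is c.c.c., so each $\dot g(t)$ depends on a countable subset of coordinates and the relevant subforcing has size at most $\omega_1$. Then I apply the $\Delta$-system lemma to the finite supports $\{\mathrm{supp}(p_t) : t \in T\} \subseteq [\omega_1]^{<\omega}$, and refine by pigeonhole on the finitely many possibilities for $p_t$ restricted to the common root and on the $\omega$ possible values of $n_t$. The outcome is an uncountable $T^* \subseteq T$, a root $R \in [\omega_1]^{<\omega}$, and constants $q_R \in \mathrm{Add}(\omega,R)$ and $n^* \in \omega$ such that for all $t \in T^*$, $p_t \restriction (R \times \omega) = q_R$, $n_t = n^*$, and the supports off $R$ are pairwise disjoint. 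Then the $p_t$'s on $T^*$ are pairwise compatible, so $T^*$ is an uncountable antichain: any pair $s <_T t$ in $T^*$ would contradict the observation, since $n_s = n^* = n_t$.

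The main obstacle is converting the extraction of an uncountable antichain into a global contradiction, since a nonspecial tree may well contain uncountable antichains (e.g., any nonspecial Aronszajn non-Suslin tree). I plan to close this gap by iterating the $\Delta$-system extraction transfinitely on $T \setminus T^*$, which remains nonspecial (removing an antichain from a non-countable-union-of-antichains leaves a non-countable-union-of-antichains), and combining the iteration with Todor\v{c}evi\'{c}'s pressing down lemma for trees (Theorem~\ref{PressingDownTree}) applied to a regressive function on $T$ derived from the data $(p_t, n_t)$. This should carve out a nonspecial subtree of $T$ on which all chosen conditions lie below a single common extension and all forced values coincide---a direct contradiction with injectivity of $\dot g$ on chains within that subtree.
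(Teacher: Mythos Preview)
Your argument for $\mathrm{Add}(\omega,1)$ is correct. For general $\kappa$ there is a genuine gap. First, a nonspecial tree of height $\omega_1$ need not have size $\aleph_1$ (e.g.\ $\sigma(\mathbb{R})$, or the trees of size $\theta \geq \omega_2$ to which this claim is applied in the paper), so your reduction to $\kappa = \omega_1$ is not available; the paper instead uses $|\bigcup_t \mathrm{dom}(p_t)| \leq |T|$ to re-index conditions as finite partial functions from $T$ to $2$. Second and more importantly, the $\Delta$-system lemma yields only an uncountable set, not a nonspecial subtree, and your proposed transfinite iteration does not close this: removing $\omega_1$ many antichains may exhaust $T$ while only showing that $T$ is an $\omega_1$-union of antichains, which is no contradiction. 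Your last sentence names the right tool (Theorem~\ref{PressingDownTree}) but does not specify the regressive function, and that is precisely where the content of the argument lives.

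The paper replaces the $\Delta$-system lemma by pressing down throughout, since the latter returns nonspecial subtrees. After fixing $n_t = m$ and $|\mathrm{dom}(p_t)| = n$ on a nonspecial subtree (countable pigeonhole preserves nonspeciality), one presses down $n$ times: at round $i$, send $t$ to the $<_T$-least $s < t$ such that the $i$-th coordinate of $\mathrm{dom}(p_t)$ already appears in $\mathrm{dom}(p_s)$, or to the root if none exists. This produces a nonspecial $T'$ and a fixed finite set $D$ with $\mathrm{dom}(p_t) \cap \mathrm{dom}(p_{t'}) \subseteq D$ whenever $t <_{T'} t'$. One more finite pigeonhole fixes $p_t \restriction D$ on a nonspecial $T^*$, so comparable nodes carry compatible conditions yet the same forced value $m$---the desired contradiction. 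The point is that the ``root'' of the $\Delta$-system must be found \emph{along chains} while staying inside a nonspecial subtree, and only pressing down does both at once.
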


\begin{proof}
Let $T$ be a given nonspecial tree. Let $\lambda$ be a cardinal and $\mathrm{Add}(\omega,\lambda)$ be the Cohen forcing for adding $\lambda$ reals. Suppose for the sake of contradiction that there exists a name for a specializing function $\dot{g}: T\to \omega$. For each $t\in T$, find $p_t \in \mathrm{Add}(\omega, \lambda)$ such that $p_t \Vdash \dot{g}(t)=n_t$ for some $n_t\in \omega$. Let $W=_{def}\bigcup\{\mathrm{dom}(p_t): t\in T\}$ and since $|W|\leq |T|$, there exists an injection $l: W\to T$. For each $t\in T$, $p_t$ is naturally identified with $p'_t$ such that $\mathrm{dom}(p'_t)=l[\mathrm{dom}(p_t)]$ and for each $s\in \mathrm{dom}(p'_t)$, $p'_t(s)=p_t(l^{-1}(s))$. It is immediate that whenever $s, s'\in T$ are such that $p'_s \cup p'_{s'}$ is a function, $p_s\cup p_{s'}$ is also a function. Hence, without loss of generality, we may assume $p_t$ is a finite partial function from $T$ to $2$.

 Let $F_t = \mathrm{dom}(p_t)$ for all $t\in T$. Since $T$ is not special, by going to a nonspecial subtree if necessary, we can assume there exists $m\in \omega$ and $n\in \omega$ such that $n_t=m$ and $|F_t|=n$ for all $t\in T$. Fix some well-ordering $\vartriangleleft$ on $T$.

We shrink the trees in $n$ rounds. Let $T_0=T$. At stage $i+1$, define a regressive function on $T_i$ such that $t\in T_i$ is mapped to 
\begin{enumerate}
\item its immediate predecessor if it has one, 
\item otherwise, the $<_T$-least strictly smaller node $s$ such that the $i$-th element (in the given order $\vartriangleleft$) of $F_t$ is in $F_s$ if such $s$ exists, 
\item otherwise, the root. 
\end{enumerate}

Apply Theorem \ref{PressingDownTree} and let $T_{i+1}$ be a non-special subtree on which the function is a constant, say $s_{i+1}$. Then we have the following property: for $t<t'\in T_{i+1}$, if the $i$-th element in $F_{t'}$ is in $F_t$, then it is already in $F_{s_{i+1}}$. Let $T'=T_n$. For any $t'\in T'$, we know that all elements in $F_{t'}$ that are also in $F_s$ for some $s<t'$ with $s\in T'$ are already in $D=_{def}\bigcup_{i < n}F_{s_i}$. Thus $F_t\cap F_{t'}\subset F_{t'}\cap D$ for every $t,t'\in T$ with $t<t'$. As $2^{[D]^{<\omega}}$ is finite, we can further find a nonspecial subtree $T^*\subset T'$, $r\in [D]^{<\omega}$ and $h: r\to 2$ such that for all $t\in T^*$, $F_t\cap D=r$ and $p_t\restriction r = h$. 
This implies that for any $t<t'\in T^*$, $p_t$ and $p_{t'}$ are compatible. This contradicts the fact that $\Vdash \dot{g}: T\to \omega$ is a specializing function.

\end{proof}

\begin{proof}[Proof that $\mathrm{RC}$ holds in $V^{\mathbb{M}_\kappa}$]
Let $G\subset \mathbb{M}_\kappa$ be generic over $V$ and let $T\in V[G]$ be a nonspecial tree of size $\theta$. In $V[G]$, we have that $\kappa=\omega_2$.
We may assume  $T$ is of the form $(\theta, <_T)$. Let $\lambda>\theta$ be a sufficiently large regular cardinal and fix $j: V\to M$ witnessing $\kappa$ is $\lambda$-strongly compact. More precisely, the embedding $j$ satisfies that $\mathrm{crit}(j)=\kappa$, ${}^\kappa M\subset M$ and for any $X\subset M$ such that $|X|\leq \lambda$ there is $Y\in M$ with $X\subset Y$ and $M\models |Y|<j(\kappa)$. By the $\kappa$-c.c-ness of $\mathbb{M}_\kappa$, $j\restriction \mathbb{M}_\kappa = id$ is a complete embedding of $\mathbb{M}_\kappa$ into $j(\mathbb{M}_\kappa)$. Moreover, it is not hard to see that $j(\mathbb{M}_\kappa)\restriction \kappa = \mathbb{M}_\kappa$. Fix some $Y\in M$ such that $M\models |Y|<j(\kappa)$ and $j''\theta \subset Y$. Let $K$ be generic over $V[G]$ for $j(\mathbb{M}_\kappa)/G$. We can lift $j$ to an elementary embedding $j^+ :V[G] \to M[G*K]$. In $M[G*K]$, we know that $(Y\cap j^+(\theta), <_{j^+(T)})$ is a subtree of $j^+(T)$ of size $<j(\kappa)$. In $V[G*K]$, we can see that $(Y\cap j^+(\theta), <_{j^+(T)})$ contains $(j'' \theta, <_{j^+(T)})$. We will be done if we manage to show that $(Y\cap j^+(\theta), <_{j^+(T)})$ is a nonspecial subtree of $j^+(T)$ (in $V[G*K]$ hence also in $M[G*K]$ by the downward absoluteness) as we can then finish by applying the elementarity of $j^+$. Since $(j'' \theta, <_{j^+(T)})\simeq (T, <_T)$ in $V[G*K]$, it is sufficient to show $(T,<_T)$ remains nonspecial after forcing with $j(\mathbb{M}_\kappa)/G$ over $V[G]$.

By the properties listed before Claim \ref{notspecializedclosed}, we know $j(\mathbb{M}_\kappa)/G$ is a projection of $(\mathrm{Add}(\omega,j(\kappa))\times R^*)^{M[G]}$ where $R^*\in M[G]$ is a countably closed poset in $M[G]$. But $R^*$ is also countably closed in $V[G]$, since $V\models M^\kappa\subset M$ and $\mathbb{M}_\kappa$ is $\kappa$-c.c, which implies that $V[G]\models M[G]^\kappa\subset M[G]$ (see for example Proposition 8.4 in \cite{MR2768691}). To summarize, in $V[G]$, $j(\mathbb{M}_\kappa)/G$ is a projection of $\mathrm{Add}(\omega,j(\kappa))\times R^{*}$ where $R^*$ is countably closed in $V[G]$. Apply Claim \ref{notspecializedclosed} and then Claim \ref{notspecializedcohen}, we know that $(T,<_T)$ remains nonspecial in $V[G*K]$.
\end{proof}

\subsection{Putting it together}

\begin{definition}
A forcing poset $\mathbb{P}$ such that $\Vdash_\mathbb{P} ``\kappa$ is regular'' has 
\begin{enumerate}
\item the $\kappa$-covering property if for any generic $G\subset \mathbb{P}$ and any set of ordinals $A\in V[G]$ such that $V[G]\models |A|<\kappa$, there exists $B\in V$ such that $V\models |B|<\kappa$ and $V[G]\models A\subset B$;
\item the $\kappa$-approximation property if for any generic $G\subset \mathbb{P}$ and any set of ordinals $A\in V[G]$, if $A\cap a\in V$ for all $a\in V$ with $V\models |a|<\kappa$, then $A\in V$.
\end{enumerate}
\end{definition}

\begin{remark}
For any poset $\mathbb{P}$ and regular $\kappa$, if $\mathbb{P}$ is $\kappa$-c.c, then $\mathbb{P}$ has the $\kappa$-covering property.
\end{remark}

We will need the following lemmas.

\begin{lemma}[\cite{MR3072773}, Lemma 2.4]\label{unger}
For $\kappa$ regular, if a poset $\mathbb{P}$ satisfies that $\mathbb{P}\times \mathbb{P}$ has $\kappa$-c.c, then $\mathbb{P}$ has $\kappa$-approximation property.
\end{lemma}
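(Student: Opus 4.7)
The plan is to argue the contrapositive: assuming $\mathbb{P}$ fails the $\kappa$-approximation property, I construct an antichain of size $\kappa$ in $\mathbb{P}\times\mathbb{P}$. Fix a witness: some $p\in\mathbb{P}$ and a $\mathbb{P}$-name $\dot{A}$ for a set of ordinals such that $p\Vdash\dot{A}\notin V$ yet $p\Vdash\dot{A}\cap a\in V$ for every $a\in V$ of size $<\kappa$. The strategy exploits two copies of $\mathbb{P}$: letting $\dot{A}_\ell,\dot{A}_r$ denote the left/right interpretations of $\dot{A}$ in $V^{\mathbb{P}\times\mathbb{P}}$, I recursively build, for each $\alpha<\kappa$, a pair $(p_\alpha^0,p_\alpha^1)\leq(p,p)$ and an ordinal $\gamma_\alpha$ witnessing that the two copies of $\dot{A}$ can be forced to disagree at $\gamma_\alpha$, and I guarantee pairwise incompatibility of the pairs by freezing the value of $\dot{A}\cap\{\gamma_\beta:\beta<\alpha\}$ in $V$ identically on both coordinates before each choice of $\gamma_\alpha$.

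The technical heart is the following \emph{disagreement lemma}: for every $(q_0,q_1)\leq(p,p)$ and every $F\in V$ with $|F|<\kappa$, there exist $(q_0',q_1')\leq(q_0,q_1)$ and $\gamma\notin F$ with $q_0'\Vdash\gamma\in\dot{A}$ and $q_1'\Vdash\gamma\notin\dot{A}$. To prove this, assume for contradiction that $(q_0,q_1)\Vdash\dot{A}_\ell\setminus F=\dot{A}_r\setminus F$. Then any $r_0\leq q_0$ deciding ``$\eta\in\dot{A}$'' for some $\eta\notin F$ forces $q_1$ itself to decide it the same way, because otherwise pairing $r_0$ with a contrary $s\leq q_1$ would make $(r_0,s)$ force $\dot{A}_\ell\setminus F\neq\dot{A}_r\setminus F$. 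Since $q_1$ cannot decide both ways, $q_0$ itself already decides each $\eta\notin F$, so $q_0\Vdash\dot{A}\setminus F=\check{B_1}$ for some $B_1\in V$. Combining this with the approximation hypothesis applied to $F$, some extension of $q_0$ forces $\dot{A}=\check{B_1\cup B_2}\in V$, contradicting $p\Vdash\dot{A}\notin V$.

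Granting the disagreement lemma, the stage-$\alpha$ construction proceeds as follows. Let $F_\alpha=\{\gamma_\beta:\beta<\alpha\}$, which has size $<\kappa$ by regularity of $\kappa$. Using the approximation hypothesis applied to $F_\alpha$, extend $p$ to some $p^*\leq p$ with $p^*\Vdash\dot{A}\cap F_\alpha=\check{B}$ for some $B\subseteq F_\alpha$ in $V$. Now apply the disagreement lemma to $(p^*,p^*)$ and $F_\alpha$ to obtain $(p_\alpha^0,p_\alpha^1)\leq(p^*,p^*)$ and $\gamma_\alpha\notin F_\alpha$ with $p_\alpha^0\Vdash\gamma_\alpha\in\dot{A}$ and $p_\alpha^1\Vdash\gamma_\alpha\notin\dot{A}$; crucially both coordinates still force $\dot{A}\cap F_\alpha=B$. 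To verify incompatibility with an earlier pair $(p_\beta^0,p_\beta^1)$ for $\beta<\alpha$: if $\gamma_\beta\in B$ then $p_\alpha^1\Vdash\gamma_\beta\in\dot{A}$ but $p_\beta^1\Vdash\gamma_\beta\notin\dot{A}$, so $p_\alpha^1\perp p_\beta^1$; if $\gamma_\beta\notin B$ then $p_\alpha^0\Vdash\gamma_\beta\notin\dot{A}$ but $p_\beta^0\Vdash\gamma_\beta\in\dot{A}$, so $p_\alpha^0\perp p_\beta^0$. Either way $(p_\alpha^0,p_\alpha^1)\perp(p_\beta^0,p_\beta^1)$ in $\mathbb{P}\times\mathbb{P}$, yielding a $\kappa$-sized antichain and contradicting the $\kappa$-c.c of $\mathbb{P}\times\mathbb{P}$.

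The main obstacle is the disagreement lemma, where mutual genericity in the product must be used to propagate a decision about $\dot{A}$ below $q_0$ into a decision made by $q_1$ itself, then combined with the approximation hypothesis to force $\dot{A}$ into $V$ outright. The other essential move is the symmetric freezing of $\dot{A}\cap F_\alpha$ on both coordinates before invoking the disagreement lemma; this is what converts a mere supply of disagreement pairs into a genuine antichain, since it removes any asymmetry between the two coordinates with respect to previously chosen ordinals.
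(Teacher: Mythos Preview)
The paper does not prove this lemma; it merely cites it from \cite{MR3072773}, so there is no proof in the paper to compare against. Your argument is essentially the standard one and is correct.

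One minor imprecision worth noting: your disagreement lemma is stated with a fixed orientation ($q_0'\Vdash\gamma\in\dot{A}$ and $q_1'\Vdash\gamma\notin\dot{A}$), but the contradiction hypothesis you assume, namely $(q_0,q_1)\Vdash\dot{A}_\ell\setminus F=\dot{A}_r\setminus F$, is the negation of the \emph{symmetric} statement (disagreement in some direction). Strictly speaking, the negation of your one-directional lemma only gives that for every $\gamma\notin F$ either $q_0\Vdash\gamma\notin\dot{A}$ or $q_1\Vdash\gamma\in\dot{A}$, which does not by itself force $q_0$ to decide all of $\dot{A}\setminus F$. This is harmless for two reasons: first, you only ever apply the lemma to diagonal pairs $(p^*,p^*)$, where a swap of coordinates recovers the desired orientation; second, your incompatibility verification at the end actually goes through regardless of which coordinate forces ``in'' and which forces ``out'' at each stage, since both $p_\alpha^0$ and $p_\alpha^1$ agree on $\dot{A}\cap F_\alpha$ and the stage-$\beta$ pair disagrees on $\gamma_\beta\in F_\alpha$. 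So the argument is sound as written, but it would read more cleanly either to state the disagreement lemma symmetrically or to restrict it to diagonal pairs from the outset.
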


\begin{lemma}[\cite{MR0313057}, Lemma 3.3]\label{Knaster}
$\mathbb{M}_\kappa$ is $\kappa$-Knaster. 
\end{lemma}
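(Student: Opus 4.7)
The plan is to establish $\kappa$-Knasterness of $\mathbb{M}_\kappa$ via a $\Delta$-system argument, making essential use of the inaccessibility of $\kappa$ (which follows from its strong compactness).

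Given an arbitrary family $\{(p_\xi, f_\xi) : \xi < \kappa\}$ of $\mathbb{M}_\kappa$-conditions, I would thin it in two $\Delta$-system steps. Applied to the finite sets $\mathrm{dom}(p_\xi)$, the standard $\Delta$-system lemma yields a $\kappa$-sized subfamily whose domains form a $\Delta$-system with finite root $r_1$. A second application to the countable supports $\mathrm{supp}(f_\xi)$ produces a further $\kappa$-sized subfamily whose supports form a $\Delta$-system with countable root $r_2$; this step requires $\lambda^{\aleph_0} < \kappa$ for all $\lambda < \kappa$, which is the first use of inaccessibility. Since $r_1$ is finite, a pigeonhole refinement makes $p_\xi \restriction r_1$ constant on a $\kappa$-sized sub-subfamily. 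For the second coordinate, note that $r_2$ is countable and $\kappa$ is regular uncountable, so $\beta := \sup r_2 < \kappa$. Working with nice $\mathrm{Add}(\omega, \alpha)$-names (as one may, after passing to a dense suborder), the number of such names for an element of $\mathrm{Add}(\omega_1, 1)^{V^{\mathrm{Add}(\omega, \alpha)}}$ is bounded by $\beta^{\aleph_1} < \kappa$ by inaccessibility. Hence there are fewer than $\kappa$ possibilities for $f_\xi \restriction r_2$, and a final pigeonhole makes this restriction constant.

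On the resulting $\kappa$-sized subfamily any two conditions $(p_\xi, f_\xi)$ and $(p_\eta, f_\eta)$ are compatible: $p_\xi \cup p_\eta$ is a well-defined Cohen condition (agreeing on the common part $r_1$ and otherwise disjoint), and $h := f_\xi \cup f_\eta$ is a well-defined countably supported function (agreeing on $r_2$ and with disjoint supports otherwise), so $(p_\xi \cup p_\eta, h)$ is a common extension in $\mathbb{M}_\kappa$. The main technical obstacle lies in the bookkeeping for the second refinement: one has to fix a dense suborder consisting of conditions built from nice names and carefully verify the bound of $\beta^{\aleph_1}$ on the number of such names; this is the only genuinely nontrivial use of the large cardinal hypothesis in the argument.
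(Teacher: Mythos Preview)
Your proposal is correct and follows essentially the same approach as the paper's. The paper does not give a self-contained proof but points to Mitchell's original argument and to Claim~\ref{newChain}, noting that one should invoke the $\kappa$-Knasterness of $\mathrm{Add}(\omega,\kappa)$ rather than merely its $\kappa$-c.c.; your two-step $\Delta$-system (on the finite Cohen domains, then on the countable supports) together with pigeonhole refinements is exactly the standard unpacking of that Knaster step combined with the argument of Claim~\ref{newChain}.
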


In fact, Lemma 3.3 of \cite{MR0313057} shows $\mathbb{M}_\kappa$ is $\kappa$-c.c. But essentially the same proof gives that $\mathbb{M}_\kappa$ is $\kappa$-Knaster. The key modification is that the $\kappa$-Knasterness of $\mathrm{Add}(\omega,\kappa)$ will be used in the proof instead of just the $\kappa$-c.c-ness of $\mathrm{Add}(\omega,\kappa)$. Alternatively, the proof can also be adapted from the proof of Claim \ref{newChain} in Section \ref{separation} with the aforementioned modification in mind.

In particular, by Lemma \ref{unger} and Lemma \ref{Knaster}, $\mathbb{M}_\kappa$ satisfies the $\kappa$-approximation property and the $\kappa$-covering property.

We use the following result due to Viale and Weiss \cite{MR2838054}. 

\begin{theorem}[\cite{MR2838054}]\label{VialeWeiss}
Let $\kappa$ be an inaccessible cardinal and $\mathbb{P}$ be a proper poset with the $\kappa$-covering and $\kappa$-approximation properties. If in $V^\mathbb{P}$, the super tree property holds at $\kappa$, then the super tree property must already hold in $V$ at $\kappa$. In particular, $\kappa$ must be supercompact in $V$.
\end{theorem}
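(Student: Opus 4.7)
My plan is to transfer an ineffable branch from $V^{\mathbb{P}}$ down to $V$ using the $\kappa$-covering and $\kappa$-approximation properties, after lifting a hypothetical $V$-counterexample list up to $V^{\mathbb{P}}$. Fix $\lambda \geq \kappa$ in $V$ and a thin $P_\kappa\lambda$-list $D = \langle d_a : a \in (P_\kappa\lambda)^V\rangle$ in $V$; the goal is to produce an ineffable branch for $D$ in $V$, which will show the super tree property at $\kappa$ holds in $V$, whence $\kappa$ is supercompact in $V$ by Magidor's theorem together with the assumed inaccessibility.

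In $V^{\mathbb{P}}$, extend $D$ to a list $\tilde D$ on $(P_\kappa\lambda)^{V^{\mathbb{P}}}$. Using $\kappa$-covering, fix $\phi : (P_\kappa\lambda)^{V^{\mathbb{P}}} \to (P_\kappa\lambda)^V$ with $a \subseteq \phi(a)$, $\phi(a) = a$ whenever $a \in V$, and $\phi(a)$ closed under a fixed $f \in V$ witnessing thinness of $D$. Define $\tilde d_a = d_{\phi(a)} \cap a$. To see $\tilde D$ is thin at the $V^{\mathbb{P}}$-club $Cl_f^{V^{\mathbb{P}}}$: for $c \in Cl_f^{V^{\mathbb{P}}}$, cover $c$ by a $V$-small $V$-set $c^{*} \in Cl_f^V$; then $\tilde d_a \cap c = d_{\phi(a)} \cap c = (d_{\phi(a)} \cap c^{*}) \cap c$, and $\{d_{\phi(a)} \cap c^{*} : c \subseteq a\}$ has fewer than $\kappa$ values by thinness of $D$ in $V$.

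Apply the super tree property in $V^{\mathbb{P}}$ to $\tilde D$, producing an ineffable branch $\tilde d$ with stationary witness $\tilde S \subseteq (P_\kappa\lambda)^{V^{\mathbb{P}}}$. For each $c \in V$ with $|c|^V < \kappa$, by stationarity find $a \in \tilde S$ with $c \subseteq a$; then $\tilde d \cap c = \tilde d_a \cap c = d_{\phi(a)} \cap c \in V$, so $\kappa$-approximation yields $\tilde d \in V$. Set $S' = \{a \in (P_\kappa\lambda)^V : \tilde d \cap a = d_a\}$, which lies in $V$; since $\tilde d_a = d_a$ for $a \in V$, we have $\tilde S \cap V \subseteq S'$. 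The remaining task is to verify $S'$ is stationary in $V$.

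The main obstacle is this final step: a priori the witness $\tilde S$ could lie entirely outside $V$, so one must ensure it meets $V$-clubs in $V$-elements. This is handled by a careful refinement of the construction of $\tilde D$ — for instance, encoding membership in $V$ into $\tilde d_a$ for $a \notin V$ so that coherence of any ineffable branch $\tilde d \in V$ forces its witness to concentrate on $V$-sets. Given any $V$-club $Cl_g^V$, the $V^{\mathbb{P}}$-club $Cl_g^{V^{\mathbb{P}}}$ meets $\tilde S$, and the encoding forces the intersection to land in $V$, providing $a \in Cl_g^V \cap S'$. Once $S'$ is stationary in $V$, $\tilde d$ is the sought ineffable branch for $D$, completing the proof.
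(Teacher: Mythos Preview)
The paper does not prove this theorem; it is quoted from Viale and Weiss and invoked as a black box. So there is no in-paper proof to compare against.

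Your overall strategy is the right one and is essentially the Viale--Weiss approach: lift the $V$-list to a thin $V^{\mathbb{P}}$-list using $\kappa$-covering, pull an ineffable branch back to $V$ using $\kappa$-approximation, and then argue that the witnessing stationary set can be taken in $V$. There is a small gap already in your thinness verification: to invoke thinness of $D$ at $c^{*}$ you need $c^{*} \subseteq \phi(a)$, and nothing in your construction of $\phi$ guarantees this. This is repairable with a more careful choice of $\phi$ or of the club.

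The substantive gap is the final step, which you correctly flag as ``the main obstacle'' but do not actually carry out. The phrase ``encoding membership in $V$ into $\tilde d_a$ for $a \notin V$'' gestures at the right idea, but it is precisely this encoding that constitutes the technical heart of the Viale--Weiss argument, and you have not specified it. One must say exactly what $\tilde d_a$ is for $a \notin V$, check that the resulting list remains thin, verify that the $\kappa$-approximation argument for $\tilde d \in V$ still goes through after the modification, and then show concretely why a branch lying in $V$ cannot cohere with $\tilde d_a$ for stationarily many $a \notin V$. In the original source this is handled via the machinery of slender lists and guessing models, and properness of $\mathbb{P}$ enters essentially at this stage. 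As written, your proposal is a correct outline with the decisive technical step left as a promissory note.
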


\begin{theorem}\label{MitchellRC}
Let $\kappa$ be a strongly compact cardinal that is not supercompact. Then there exists a forcing extension in which $\mathrm{RC} $ and $2^\omega=\kappa=\omega_2$ hold but the super tree property at $\omega_2$ fails.
\end{theorem}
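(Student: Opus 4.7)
The plan is to take $V^{\mathbb{M}_\kappa}$ as the desired extension and check the three properties. In $V^{\mathbb{M}_\kappa}$ we have $\kappa = \omega_2$ by the standard Mitchell analysis, and since $|\mathbb{M}_\kappa| = \kappa$ (using inaccessibility of $\kappa$) while $\mathbb{M}_\kappa$ projects onto $\mathrm{Add}(\omega,\kappa)$, a nice-name count gives $2^{\aleph_0} = \kappa$ in the extension. The validity of $\mathrm{RC}$ in $V^{\mathbb{M}_\kappa}$ is exactly what was established in Section \ref{MitchellRC1}, and I would simply cite that.

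The remaining, and in fact the only substantive task, is to rule out $\mathrm{ITP}(\omega_2, \lambda)$ for some $\lambda$ in the extension. I would argue by contradiction using Theorem \ref{VialeWeiss}. Suppose $V^{\mathbb{M}_\kappa}$ satisfies the super tree property at $\omega_2 = \kappa$. The poset $\mathbb{M}_\kappa$ is proper by the properties listed before Claim \ref{notspecializedclosed}. By Lemma \ref{Knaster}, $\mathbb{M}_\kappa$ is $\kappa$-Knaster; in particular $\mathbb{M}_\kappa \times \mathbb{M}_\kappa$ is $\kappa$-cc, so Lemma \ref{unger} supplies the $\kappa$-approximation property, while $\kappa$-cc alone yields the $\kappa$-covering property by the remark following the definition. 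Theorem \ref{VialeWeiss} then transfers the super tree property at $\kappa$ down to $V$, and since $\kappa$ is inaccessible, Magidor's characterization makes $\kappa$ supercompact in $V$, contradicting the hypothesis that $\kappa$ is strongly compact but not supercompact.

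The main conceptual obstacle is not in the contradiction step, which is essentially assembly, but in ensuring that $V^{\mathbb{M}_\kappa}$ has all the right features simultaneously: preservation of nonspecial trees (handled in Section \ref{MitchellRC1} via the projection of $j(\mathbb{M}_\kappa)/G$ onto $\mathrm{Add}(\omega, j(\kappa)) \times R^*$ and Claims \ref{notspecializedclosed} and \ref{notspecializedcohen}), together with the $\kappa$-cc/Knaster and approximation/covering package needed to invoke Viale--Weiss. Since both packages have been isolated in the preceding subsection, the theorem follows by combining them.
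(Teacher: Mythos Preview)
Your proposal is correct and follows essentially the same approach as the paper: force with $\mathbb{M}_\kappa$, invoke Subsection \ref{MitchellRC1} for $\mathrm{RC}$ and the cardinal arithmetic, and use the Knaster property (Lemma \ref{Knaster}) together with Lemma \ref{unger} to obtain $\kappa$-approximation and $\kappa$-covering so that Theorem \ref{VialeWeiss} yields the contradiction with non-supercompactness. The paper's proof is terser but the logical skeleton is identical.
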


\begin{proof}

Force with $\mathbb{M}_\kappa$ and let $G\subset \mathbb{M}_\kappa$ be generic over $V$. In $V[G]$, $\mathrm{RC} + 2^\omega=\kappa=\omega_2$ hold by the discussion in Subsection \ref{MitchellRC1} but the super tree property at $\kappa=(\omega_2)^{V^{\mathbb{M}_\kappa}}$ fails by Theorem \ref{VialeWeiss} and Lemma \ref{Knaster}. 

\end{proof}

To end this section, we discuss how the Mitchell extension can serve as a simple model to separate $\mathrm{RC}$ from the stationary reflection principle $\mathrm{WRP}$ (recall Definition \ref{WRP}). More precisely, we show that $\mathrm{WRP}$ fails in the model constructed in Theorem \ref{MitchellRC}. 
To see this, let $G\subset \mathbb{M}_\kappa$ be generic over $V$. First notice:
\begin{claim}\label{Cohen}
$V[G] \models \mathrm{MA}_{\omega_1}(\mathrm{Cohen})$.
\end{claim}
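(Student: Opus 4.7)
The plan is to reflect the given dense sets down to a bounded stage $V[G_\mu]$ of the Mitchell iteration and then extract the desired filter from the Cohen-projection of the tail of $G$. Since $\omega_1$ is preserved by $\mathbb{M}_\kappa$, the Cohen poset $\mathbb{C} := \mathrm{Add}(\omega, \omega_1)$ already lives in $V$ and is the same in every intermediate model. Let $\vec{D} = \langle D_\alpha : \alpha < \omega_1 \rangle \in V[G]$ be any sequence of dense subsets of $\mathbb{C}$; the goal is to produce a filter $F \in V[G]$ meeting each $D_\alpha$.

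First, I would find an inaccessible $\mu < \kappa$ such that $\vec{D} \in V[G_\mu]$, where $G_\mu := G \cap \mathbb{M}_\mu$. View $\vec{D}$ as a subset of $\omega_1 \times \mathbb{C}$ and fix a nice $\mathbb{M}_\kappa$-name for it consisting, for each pair $(\alpha, p) \in \omega_1 \times \mathbb{C}$, of an antichain $A_{\alpha, p}$ deciding ``$\check{p} \in \dot{D}_\alpha$''. By Lemma \ref{Knaster}, each $|A_{\alpha,p}| < \kappa$, and since $\mathrm{cf}(\kappa) = \kappa > \aleph_1 = |\omega_1 \times \mathbb{C}|$, the supremum $\lambda := \sup_{\alpha,p} |A_{\alpha,p}|$ is also below $\kappa$; hence the total number of conditions appearing in the nice name is at most $\aleph_1 \cdot \lambda < \kappa$. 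Since every $(p,f) \in \mathbb{M}_\kappa$ has $\mathrm{dom}(p)$ finite and $\mathrm{supp}(f)$ countable, the union of the supports of these conditions is a subset of $\kappa$ of size $<\kappa$, hence bounded below some inaccessible $\mu < \kappa$ by the regularity of $\kappa$. All conditions in the nice name then lie in $\mathbb{M}_\mu$, so $\vec{D} \in V[G_\mu]$.

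Next, I would exploit property (3) of Mitchell forcing listed just before Claim \ref{notspecializedclosed}: in $V[G_\mu]$, $\mathbb{M}_\kappa/G_\mu$ is a projection of $\mathrm{Add}(\omega,\kappa) \times R^*$ for some countably closed $R^*$. In particular, the natural map $(p,f) \mapsto p$ descends to a projection of $\mathbb{M}_\kappa/G_\mu$ onto $\mathrm{Add}(\omega,\kappa)^{V[G_\mu]}$, so $V[G]$ contains a filter $H$ that is $V[G_\mu]$-generic for $\mathrm{Add}(\omega,\kappa)$. Since $\mathbb{C}$ is a complete subforcing of $\mathrm{Add}(\omega,\kappa)$ via any injection $\omega_1 \hookrightarrow \kappa$, the set $F := H \cap \mathbb{C} \in V[G]$ is a $V[G_\mu]$-generic filter on $\mathbb{C}$ and hence meets every $D_\alpha$, as required.

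The only delicate point is the support-counting in the reflection step, which relies on both the $\kappa$-chain condition of $\mathbb{M}_\kappa$ and the finite/countable-support structure of its conditions; everything else is a standard product-style manipulation, exploiting the abundance of Cohen generics over bounded inner models in Mitchell's extension.
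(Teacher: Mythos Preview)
Your argument is correct, though the statement that $\mathbb{M}_\kappa/G_\mu$ projects onto $\mathrm{Add}(\omega,\kappa)^{V[G_\mu]}$ is slightly off: the Cohen part below $\mu$ is already determined by $G_\mu$, so what you actually get is a projection onto $\mathrm{Add}(\omega,[\mu,\kappa))$. Concretely, $(p,f)\mapsto((p\restriction\mu,f\restriction\mu),\,p\restriction[\mu,\kappa))$ is a projection of $\mathbb{M}_\kappa$ onto $\mathbb{M}_\mu\times\mathrm{Add}(\omega,[\mu,\kappa))$, and this is what gives you a $V[G_\mu]$-generic Cohen filter inside $V[G]$. This does not affect the conclusion, since $[\mu,\kappa)$ still contains a copy of $\omega_1$, and your dense sets live in $V[G_\mu]$.

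Your route is genuinely different from the paper's. The paper argues via a generic lifting of a measurable embedding: fix $j:V\to M$ with $\mathrm{crit}(j)=\kappa$ and ${}^\kappa M\subset M$, force with $j(\mathbb{M}_\kappa)/G$, lift $j$ to $j^+:V[G]\to M[G\ast H]$, observe that the tail adds a Cohen-generic over $M[G]\ni\bar D$, and pull the existence of a filter back by elementarity of $j^+$ (using $j^+(\bar D)=\bar D$). Your argument instead reflects the name for $\bar D$ down to a stage $V[G_\mu]$ using only the $\kappa$-chain condition and the regularity of $\kappa$, and then reads off the desired filter directly from the Cohen part of the \emph{actual} tail of $G$. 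Your approach is more elementary in that it never invokes an elementary embedding and would go through assuming only that $\kappa$ is inaccessible and a limit of inaccessibles; the paper's approach, while it uses measurability, is more transferable in that it does not depend on being able to bound supports of individual conditions and would adapt to other iterations collapsing a measurable to $\omega_2$.
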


\begin{proof}[Proof of the Claim]
Since $\kappa$ is measurable in $V$, we can fix an elementary embedding $j: V\to M$ definable in $V$ such that $\mathrm{crit}(j)=\kappa$ and ${}^\kappa M\subset M$. In $V[G]$, fix a collection of dense open subsets of $\mathrm{Add}(\omega,1)$, say $\bar{D}=\{D_\alpha\subset \mathrm{Add}(\omega,1): \alpha<\omega_1\}$. 

Since ${}^\kappa M \subset M$, we know that $\mathbb{M}_\kappa\in M$. It is not hard to see that in $M$, $\mathbb{M}_\kappa$ embeds completely into $j(\mathbb{M}_\kappa)$ (see Section 23 in \cite{MR2768691} for more details). 
Let $H \subset j(\mathbb{M}_\kappa)/G$ be generic over $V[G]$. By a theorem of Silver (see Proposition 9.1 in \cite{MR2768691} for a proof), $j$ lifts to $j^+: V[G]\to M[G*H]$. Since $\mathbb{M}_\kappa$ is $\kappa$-c.c by Theorem \ref{Knaster} and $V\models {}^\kappa M\subset M$, we know by Proposition 8.4 in \cite{MR2768691} that $V[G]\models {}^\kappa M[G]\subset M[G]$. Therefore, $\bar{D}\in M[G]$. Since $j(\mathbb{M}_\kappa)/G$ projects onto $\mathrm{Add}(\omega,1)$ in $M[G]$, we know there exists a filter $h\subset \mathrm{Add}(\omega,1)$ in $M[G*H]$ that is generic over $M[G]$. As a result, $h$ meets $\bar{D}$ in $M[G*H]$. Since $j^+(\bar{D})={j^+} '' \bar{D}=\bar{D}$, by the elementarity of $j^+$, we know $V[G] \models $ ``there exists a filter $h\subset \mathrm{Add}(\omega,1)$ that meets $\bar{D}$''. Thus, we have shown that $V[G] \models ``\mathrm{MA}_{\omega_1}(\mathrm{Cohen})$''.
\end{proof}

Suppose for the sake of contradiction that $\mathrm{WRP}$ holds in $V[G]$. By Theorem 3.1 in \cite{MR3284479} and Claim \ref{Cohen}, we have that $\omega_2$ has the super tree property in $V[G]$. This contradicts Theorem \ref{MitchellRC}.

\begin{remark}
Sakai (\cite{MR2387938}, Section 5) established the consistency of $\mathrm{SSR}+\neg \mathrm{WRP}(\omega_3)$ relative to the consistency of a supercompact cardinal. In fact, in his model, $\mathrm{RC}+\neg \mathrm{WRP}(\omega_3)$ holds. The forcing poset used there is different from the Mitchell poset we use here and the analysis of various intermediate forcing extensions is quite delicate.
The consistency of the existence of a strongly compact cardinal that is not supercompact is due to Menas \cite{MR0357121}. There are also ways of getting extreme examples, for instance it is possible for a strongly compact cardinal to be the least measurable cardinal (see \cite{MR0429566} or Proposition 22.6 in \cite{MR2768691}).
\end{remark}

\section{Separating $s\mathrm{RC}^B$ from $\mathrm{RC}$}\label{separation}

In this section we show $s\mathrm{RC}^B$ does not imply $\mathrm{RC}$ in general. Recall that we mentioned this result in the introduction as a corollary of Theorem \ref{iterationSacks}, which will be proved in Section \ref{iterate}.
However, since the model presented in this section has a certain ``minimal'' flavor and is different from the model constructed in Section \ref{iterate}, we believe it is of independent interest. 

We start off introducing a tree that will be central in the proof.

\begin{definition}
Let $\sigma(\mathbb{R})$ denote the tree consisting of bounded subsets of $\mathbb{R}$ well ordered by the natural order on $\mathbb{R}$. The order in $\sigma(\mathbb{R})$ is end extension.
\end{definition}

We list a few observations about $\sigma(\mathbb{R})$.

\begin{observation}\label{observation}
\begin{enumerate}
\item $\sigma(\mathbb{R})$ is nonspecial (Todor\v{c}evi\'{c} \cite{MR686495}, Example 7);
\item $\sigma(\mathbb{R})$ is not Baire (for each $n\in \omega$, $U_n=_{def} \{t\in \sigma(\mathbb{R}): \exists \alpha\in t \ \alpha>n\}$ is a dense open subset of $\sigma(\mathbb{R})$ but $\bigcap_{n\in \omega} U_n=\emptyset$);
\item For any subset $U\subset \sigma(\mathbb{R})$, in any outer model, $U$ has no uncountable branches.
\end{enumerate}
\end{observation}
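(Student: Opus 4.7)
The plan focuses on item (3); items (1) and (2) come with a citation and a parenthetical sketch respectively. For (2) I would simply spell out the hint: $U_n$ is open because the order of $\sigma(\mathbb{R})$ is end-extension; it is dense because, given $t\in\sigma(\mathbb{R})$, any real $\alpha$ above $n$ and above every element of $t$ yields an extension $t\cup\{\alpha\}\in U_n$; and $\bigcap_n U_n=\emptyset$, since any node in the intersection would be a bounded subset of $\mathbb{R}$ containing reals of arbitrarily large value.

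For (3), the plan is to reduce an uncountable branch through a subset $U\subset\sigma(\mathbb{R})$ in an outer model to an uncountable well-ordered subset of the ground-model reals $\mathbb{R}^V$ in that outer model, and then invoke the classical separability argument, carefully noting that the classical argument is absolute. Concretely, fix any outer model $M\supset V$ and suppose for contradiction that $b=\{t_\alpha:\alpha<\gamma\}\subset U$ is a chain under end-extension in $M$, enumerated so that $t_\alpha\subsetneq t_\beta$ for $\alpha<\beta$, where $\gamma$ is uncountable in $M$. Set $A=\bigcup_{\alpha<\gamma}t_\alpha\subset\mathbb{R}^V$. A routine verification using end-extension shows $A$ is well-ordered by the usual order $<$ on $\mathbb{R}^V$: the minimum of any nonempty $S\subset A$ equals the minimum of $S\cap t_\alpha$ (in the native well-order of $t_\alpha$) for any $\alpha$ with $S\cap t_\alpha\neq\emptyset$, since any competitor $s'\in S$ lying in some $t_\beta\setminus t_\alpha$ with $\beta>\alpha$ automatically sits above every element of $t_\alpha$. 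Moreover, the strict inclusions $t_\alpha\subsetneq t_{\alpha+1}$ produce distinct elements $x_\alpha\in t_{\alpha+1}\setminus t_\alpha$ for each relevant $\alpha$, so $|A|\ge|\gamma|$ and $A$ is uncountable in $M$.

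To finish, I would invoke the absolute form of the classical fact that every well-ordered subset of $\mathbb{R}$ is countable. In $M$, the set $\mathbb{Q}$ is still countable (any $V$-enumeration persists) and still dense in $\mathbb{R}^V$ under the usual order (density is witnessed pointwise by the absolute statements ``$q$ lies strictly between $x$ and $y$''). Then for each non-maximum $a\in A$, letting $a^+$ be its $A$-successor and picking $q_a\in\mathbb{Q}\cap(a,a^+)$ gives an order-preserving, hence injective, map $a\mapsto q_a$ from $A$ (less at most one element) into $\mathbb{Q}$, so $A$ is countable in $M$, contradicting the previous paragraph. The only step requiring any real care is the absoluteness claim, but it is automatic, since the needed facts about $\mathbb{Q}$ and $\mathbb{R}^V$ are captured by specific formulas and bijections already present in $V$.
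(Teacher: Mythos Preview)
Your proposal is correct. The paper states this observation without a separate proof (item (1) is handled by citation, item (2) carries its own parenthetical sketch, and item (3) is left to the reader), so you are supplying exactly the details the paper omits; your argument for (3)---passing to the union of the branch to obtain an uncountable well-ordered subset of $\mathbb{R}^V$ in the outer model and then running the classical $\mathbb{Q}$-density injection with the requisite absoluteness checks---is the standard route and is cleanly executed.
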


Given a tree $T$, let $S(T)$ denote the specializing poset of $T$. More precisely, it contains finite functions $s: T\to \omega$ that are injective on chains. The poset is ordered by the reverse inclusion. We need the following characterization due to Baumgartner.

\begin{theorem}[\cite{MR823775}, \cite{MR0314621}]\label{Baumgartner}
$S(T)$ is c.c.c iff $T$ does not contain an uncountable branch.
\end{theorem}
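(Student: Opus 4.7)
\textbf{Proof proposal for Theorem \ref{Baumgartner}.}

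For the forward direction, if $T$ has an uncountable branch $\{b_\alpha:\alpha<\omega_1\}$, the singletons $s_\alpha=\{(b_\alpha,0)\}\in S(T)$ form an uncountable antichain: for $\alpha\ne\beta$ the union $s_\alpha\cup s_\beta$ assigns the value $0$ to the $<_T$-comparable pair $b_\alpha,b_\beta$, violating injectivity on chains. Hence $S(T)$ is not c.c.c.

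For the nontrivial direction, assume $T$ has no uncountable branch and fix $\{s_\alpha:\alpha<\omega_1\}\subseteq S(T)$; the aim is to find $\alpha\ne\beta$ with $s_\alpha\cup s_\beta\in S(T)$. First, apply the $\Delta$-system lemma to $\{\mathrm{dom}(s_\alpha)\}$ and perform the standard homogenization to an uncountable subfamily (re-indexed by $\omega_1$) with a common root $r$, a common restriction $s_\alpha\restriction r$, constant size $k=|\mathrm{dom}(s_\alpha)\setminus r|$, uniform enumerations $\mathrm{dom}(s_\alpha)\setminus r=\{t_\alpha^1,\dots,t_\alpha^k\}$, and fixed values $s_\alpha(t_\alpha^i)=v_i$ independent of $\alpha$. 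A further thinning by the heights $ht_T(t_\alpha^i)$, with separate subcases according to whether these heights are cofinal in $\omega_1$ or uniformly bounded by some $\gamma<\omega_1$, can be arranged so that any comparability $t_\alpha^i\sim_T t_\beta^j$ with $\alpha<\beta$ is forced to be $t_\alpha^i<_T t_\beta^j$. Compatibility of $s_\alpha$ and $s_\beta$ for $\alpha<\beta$ then reduces to the statement: $t_\alpha^i\not<_T t_\beta^j$ for every $(i,j)$ with $v_i=v_j$.

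Suppose for contradiction that no such pair $\alpha<\beta$ exists, and for each $\alpha<\beta$ pick a witness $c(\alpha,\beta)=(i,j)$ with $v_i=v_j$ and $t_\alpha^i<_T t_\beta^j$. Two successive rounds of pigeonhole on the finite set of admissible colors yield an uncountable $K\subseteq\omega_1$ and a fixed pair $(i^*,j^*)$ with $v_{i^*}=v_{j^*}$ such that, for every $\alpha\in K$, uncountably many $\beta$ in the homogenized subfamily with $\beta>\alpha$ satisfy $t_\alpha^{i^*}<_T t_\beta^{j^*}$. After a further density refinement to ensure the selected $\beta$'s lie in an uncountable $K^*\subseteq K$, I plan to recursively select $\alpha_0<\alpha_1<\cdots$ in $K^*$ with $t_{\alpha_\xi}^{i^*}<_T t_{\alpha_{\xi+1}}^{j^*}$ at each successor step, and thread the resulting sequence into an uncountable $<_T$-chain of $T$, contradicting the no-branch hypothesis.

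The main obstacle will be the mixed case $i^*\ne j^*$: the successive comparabilities $t_{\alpha_\xi}^{i^*}<_T t_{\alpha_{\xi+1}}^{j^*}$ do not immediately concatenate into a single $<_T$-chain, since $t_{\alpha_{\xi+1}}^{i^*}$ and $t_{\alpha_{\xi+1}}^{j^*}$ are $<_T$-incomparable within the injective-on-chains condition $s_{\alpha_{\xi+1}}$ (both carry the value $v_{i^*}=v_{j^*}$). Linking the two coordinates into a genuine chain will require additional bookkeeping: since each predecessor set $\{s<_T t_\beta^{j^*}\}$ is a countable chain and can absorb only countably many distinct $t_\alpha^{i^*}$'s (which are pairwise distinct by the $\Delta$-system), a pressing-down-style refinement should allow reducing the configuration to the homogeneous case $i^*=j^*$, at which point the uncountable chain follows directly. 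Handling this coordinate alignment together with the countable limit stages in the recursion is where I expect to spend the most care.
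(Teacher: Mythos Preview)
The paper does not prove this theorem; it cites Baumgartner \cite{MR823775} and Baumgartner--Malitz--Reinhardt \cite{MR0314621}, so there is no proof here to compare against.

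On the merits: your forward direction is correct, and the $\Delta$-system and value-homogenization steps are standard. The height-thinning you describe is a bit optimistic (for instance, if two coordinates $i,j$ both have constant heights $\gamma_i>\gamma_j$, then comparabilities between them go the ``wrong'' way regardless of the order of $\alpha,\beta$), but this is minor.

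The real gap is in the chain-building. After the pigeonhole you have, in the best case $i^*=j^*$, an uncountable $K^*$ such that each $t_\alpha^{i^*}$ ($\alpha\in K^*$) has uncountably many $t_\beta^{i^*}$ ($\beta\in K^*$) strictly above it in $T$. You then want to thread an uncountable $<_T$-chain through these nodes. But this hypothesis does \emph{not} imply that $T$ has an uncountable branch: if $T$ is a normal Suslin tree and $\{t_\beta^{i^*}:\beta<\omega_1\}$ enumerates $T$, then every node has an uncountable cone above it, so the hypothesis holds with $K^*=\omega_1$, yet $T$ has no uncountable chain. Your transfinite recursion has no mechanism to pass countable limit stages, and the Suslin example shows none can exist from this hypothesis alone. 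The pigeonhole to a single $(i^*,j^*)$ discards precisely the structure that would be needed; the acknowledged difficulty in the mixed case $i^*\neq j^*$ is a symptom of the same underlying problem rather than a separate obstacle to be patched by a pressing-down trick.

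The published arguments proceed differently. A key asymmetry they exploit is that for each fixed $\beta$, the set of $\alpha$ with $t_\alpha^i<_T t_\beta^j$ for some $i,j$ is \emph{countable} (each $t_\beta^j$ has only countably many predecessors and the non-root parts are pairwise disjoint). The proof then runs an induction on $k$ using this observation, rather than attempting to manufacture a branch directly from a single pigeonholed color. See Jech, \emph{Set Theory}, Theorem~16.17, or the original references for a complete argument.
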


Let $\kappa$ be a  supercompact cardinal. Let $\langle P_\alpha, \dot{Q}_\beta: \alpha\leq \kappa, \beta<\kappa\rangle$ be the finite support iteration of c.c.c forcings such that for each $\alpha$, $\Vdash_\alpha \dot{Q}_\alpha = S(\sigma(\mathbb{R})^{V^{P_\alpha}})$.
Observe that $\Vdash_\kappa$ all subsets of $\sigma(\mathbb{R})$ of size $<\kappa$ are special. The reason is that: $\mathbb{P}_\kappa$ is c.c.c and each $<\kappa$-subset of $\sigma(\mathbb{R})$ in $V^{\mathbb{P}_\kappa}$ appears in $V^{\mathbb{P}_\alpha}$ as a subset of $\sigma(\mathbb{R})^{V^{\mathbb{P}_\alpha}}$ for some $\alpha<\kappa$.

\begin{lemma}\label{indestructible}
For any Baire $T$, $T\Vdash P_\kappa$ is ccc. Hence, by Remark \ref{remarkProper} and Lemma \ref{capturing}, $P_\kappa\Vdash T$ is Baire.
\end{lemma}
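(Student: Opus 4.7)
The plan is to prove the first assertion $T \Vdash P_\kappa$ is ccc by induction on $\alpha \leq \kappa$, with the second assertion following immediately from Lemma \ref{capturing}. Fix a Baire tree $T$. The base case $\alpha = 0$ is trivial, so the two substantive cases are the successor and the limit steps.

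For the successor step $\alpha = \beta + 1$, assume inductively that $T \Vdash P_\beta$ is ccc. By the standard factorization of two-step iterations, it suffices to fix $G \subseteq T$ generic over $V$ and $H \subseteq P_\beta$ generic over $V[G]$, and show that $\dot{Q}_\beta^H = S(\sigma(\mathbb{R})^{V[H]})$ is ccc in $V[G][H]$. Now $H$ is in particular $V$-generic, so $V[H] \subseteq V[G][H]$ is an outer model. By Observation \ref{observation}(3), $\sigma(\mathbb{R})^{V[H]}$ has no uncountable branch in $V[G][H]$, and so Theorem \ref{Baumgartner} gives that $S(\sigma(\mathbb{R})^{V[H]})$ is ccc in $V[G][H]$. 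For the limit step, work in $V[G]$ (noting $\omega_1^{V[G]} = \omega_1^V$ since $T$ is $\omega$-distributive) and suppose toward contradiction that $\{p_\xi : \xi < \omega_1\} \subseteq P_\alpha$ is an antichain. If $\mathrm{cf}(\alpha) = \omega$, fix $\alpha_n \nearrow \alpha$ and by pigeonhole obtain uncountably many $\xi$ with $\mathrm{supp}(p_\xi) \subseteq \alpha_n$ for a single $n$; these form an antichain in $P_{\alpha_n}$, contradicting the inductive hypothesis. If $\mathrm{cf}(\alpha) > \omega$, apply the $\Delta$-system lemma to the finite supports to extract an uncountable subfamily whose supports form a $\Delta$-system with root $r$ bounded below $\alpha$ by some $\beta$; by the inductive hypothesis two of the restrictions $p_\xi \restriction \beta$ are compatible, which combined with disjointness of supports outside $r$ yields a common extension in $P_\alpha$, contradicting the antichain property.

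For the second assertion, $T \Vdash P_\kappa$ is ccc, hence proper, hence (by Remark \ref{remarkProper}) countably capturing. Applying Lemma \ref{capturing} with $\mathbb{P} := P_\kappa$ (which is ccc and hence countably capturing in $V$ by Remark \ref{remarkProper}) and $\mathbb{Q} := T$ (which is $\omega$-distributive by hypothesis) yields $P_\kappa \Vdash T$ is $\omega$-distributive, i.e., Baire. The main obstacle I anticipate is the successor step: one must carefully identify that $\dot{Q}_\beta^H$ is literally the specializing poset for $\sigma(\mathbb{R})^{V[H]}$ and recognize that the ``any outer model'' clause of Observation \ref{observation}(3) applies to $V[G][H] \supseteq V[H]$, so that no uncountable branch is added when passing from $V[H]$ to the ambient model $V[G][H]$.
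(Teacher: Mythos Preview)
Your proof is correct and follows essentially the same inductive approach as the paper. Two minor differences worth noting: in the successor step, the paper first establishes (via the inductive hypothesis and Lemma~\ref{capturing}) that $T$ remains Baire in the $P_\beta$-extension, then uses this to identify $\sigma(\mathbb{R})^{V[H]}$ with $\sigma(\mathbb{R})^{V[G][H]}$ before invoking Baumgartner; your route via Observation~\ref{observation}(3) directly is slightly more economical, since the absoluteness of the finite-condition poset $S(\cdot)$ makes the identification unnecessary. In the limit step, the paper simply appeals to ``the usual argument via the $\Delta$-System Lemma'' without a cofinality split; your case division is harmless but not needed, as the $\Delta$-system argument handles all limit $\alpha$ uniformly.
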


\begin{proof}
We induct on $\alpha\leq \kappa$. When $\alpha$ is a limit ordinal, let $H\subset T$ be generic over $V$. Then in $V[H]$, $P_\alpha$ is the direct limit of $\langle P_\beta: \beta<\alpha\rangle$ and each $P_\beta$ is c.c.c by the induction hypothesis. So by the usual argument via the $\Delta$-System Lemma, we know that $P_\alpha$ is also c.c.c in $V[H]$. When $\alpha=\beta+1$, let $H\times G\subset T\times P_\beta$ be generic over $V$. We examine $Q_\beta=(\dot{Q}_\beta)^{G}$ in $V[H\times G]$. By induction hypothesis, $\Vdash_T \check{P}_\beta$ is c.c.c. By Lemma \ref{capturing}, $\Vdash_{P_{\beta}} T$ is Baire. By our definition of the iteration, $Q_\beta$ lives in $V[G]$, and is a specializing poset for $(\sigma(\mathbb{R}))^{V[G]}$. Note that $(\sigma(\mathbb{R}))^{V[G]}=(\sigma(\mathbb{R}))^{V[G\times H]}$ since $T$ is Baire in $V[G]$ and $(\sigma(\mathbb{R}))^{V[G]}$ does not have any uncountable branch in $V[G\times H]$. Therefore, $Q_\beta$ is the same as the Baumgartner specializing poset for $\sigma(\mathbb{R})$ as defined in $V[G\times H]$. By Theorem \ref{Baumgartner}, $Q_\beta$ is c.c.c. in $V[G\times H]$. 
\end{proof}

\begin{remark}
Lemma \ref{indestructible} remains valid if we replace the Baire tree $T$ with any $\omega$-distributive forcing $\mathbb{P}$.
\end{remark}

We define our main forcing as a variant of Mitchell's forcing for the tree property.

\begin{definition}
$Q$ is a poset consisting of $(p,f)$ where $p\in P_\kappa$ and $f$ is a countably supported function on $\kappa$ and for each $\alpha\in \mathrm{dom}(f)$, $f(\alpha)$ is a $P_\alpha$-name for a condition in $(\mathrm{Add}(\omega_1, 1))^{V^{P_\alpha}}$. The order in $Q$ is given by $(p_1,f_1)\leq (p_2, f_2)$ iff $p_1\leq_{P_\kappa} p_2$, $\mathrm{supp}(f_1)\supset \mathrm{supp}(f_2)$ and for each $\alpha\in \mathrm{supp}(f_2)$, $p_1\restriction \alpha \Vdash_{P_\alpha} f_1(\alpha)\leq f_2(\alpha)$.
\end{definition}

\begin{claim}\label{newChain}
$Q$ is $\kappa$-c.c.
\end{claim}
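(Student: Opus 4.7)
The plan is to run a classical Mitchell-style $\Delta$-system argument adapted to the present setting. The main ingredients are: $\kappa$ is inaccessible (in fact supercompact, hence Mahlo), $P_\alpha$ is c.c.c.\ of size $<\kappa$ for every $\alpha<\kappa$, and the second coordinate of conditions in $Q$ uses countable support.

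Suppose for contradiction that $\{(p_\xi,f_\xi):\xi<\kappa\}$ is an antichain. First I would apply the $\Delta$-system lemma twice. Since $\kappa$ is regular uncountable and $\mu^{\aleph_0}<\kappa$ for every $\mu<\kappa$ by inaccessibility, I may thin the index set to size $\kappa$ so that the countable sets $\mathrm{supp}(f_\xi)$ form a $\Delta$-system with root $r$, while the finite sets $\mathrm{supp}(p_\xi)$ simultaneously form a $\Delta$-system with root $s$ (and have fixed size $n$).

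Next I would pick an inaccessible $\alpha^*<\kappa$ with $\sup(r)\cup\sup(s)<\alpha^*$, using that the inaccessibles are unbounded in the Mahlo cardinal $\kappa$. Further thinning lets me assume that for every remaining $\xi$, $\mathrm{supp}(p_\xi)\cap\alpha^* = s$ and $\mathrm{supp}(f_\xi)\cap\alpha^* = r$. The key counting point is that the truncated forcing $Q_{\alpha^*}$ has size $<\kappa$: the first coordinate lies in $P_{\alpha^*}$, which has size $<\kappa$ by induction and inaccessibility; the countably supported second coordinate is determined by countably many $P_\beta$-names (for $\beta<\alpha^*$) for elements of $(\mathrm{Add}(\omega_1,1))^{V^{P_\beta}}$, and since $P_\beta$ is c.c.c.\ together with inaccessibility of $\alpha^*$, the total number of such nice names stays below $\kappa$. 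By pigeonhole I may thin further and assume $(p_\xi\restriction\alpha^*,\, f_\xi\restriction\alpha^*)=(p^*,f^*)$ is independent of $\xi$.

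Finally, for any two remaining $\xi\neq\eta$, the conditions $p_\xi,p_\eta$ agree on $s\subset\alpha^*$ and have supports disjoint above $\alpha^*$, and similarly $f_\xi,f_\eta$ agree on $r$ and have disjoint supports above $\alpha^*$. Thus $(p_\xi\cup p_\eta,\ f_\xi\cup f_\eta)$ is a well-defined condition in $Q$ extending both, contradicting incompatibility. The main obstacle is the bookkeeping in the middle step: verifying that the number of possible truncations below $\alpha^*$ is genuinely $<\kappa$, which requires careful counting of nice $P_\beta$-names for conditions in $(\mathrm{Add}(\omega_1,1))^{V^{P_\beta}}$ and the choice of $\alpha^*$ inaccessible so that the counting goes through uniformly.
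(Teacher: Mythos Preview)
Your argument is correct, but it does more work than necessary and takes a genuinely different route from the paper. The paper's proof applies the $\Delta$-system lemma only once, to the countable supports $\mathrm{supp}(f_\xi)$, obtaining a root $h$ and (after a pigeonhole on the $<\kappa$ many possible values of $f_\xi\restriction h$) a set $A\in[\kappa]^\kappa$ on which $f_\xi\restriction h$ is constant. Then, instead of running a second $\Delta$-system on the finite supports of the $p_\xi$, the paper simply invokes the fact that $P_\kappa$ is c.c.c.: among the $\kappa$-many conditions $\{p_\xi:\xi\in A\}$, two must be compatible in $P_\kappa$, say with common extension $r$. The pair $(r,\,f_\xi+f_\eta)$ is then the desired common extension in $Q$.

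Your approach recovers the compatibility of $p_\xi$ and $p_\eta$ ``by hand'' via a $\Delta$-system on finite supports, a choice of an inaccessible $\alpha^*<\kappa$ (using that $\kappa$ is Mahlo), and a pigeonhole on the truncated poset $Q\restriction\alpha^*$. This is the classical Mitchell argument as one would run it when the first coordinate is a product (e.g.\ $\mathrm{Add}(\omega,\kappa)$); it transfers to the present iterated setting because conditions in $P_\kappa$ that agree on their common support can be amalgamated coordinatewise. What the paper's route buys is brevity: the c.c.c.\ of $P_\kappa$ is already established, so there is no need for the inaccessible $\alpha^*$ or the second $\Delta$-system. What your route buys is that it is self-contained and would go through even if one only knew $P_\kappa$ is $\kappa$-c.c.\ rather than c.c.c.; but here that extra generality is not needed.
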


\begin{proof}
Let $\langle (p_\alpha,f_\alpha): \alpha<\kappa \rangle\subset Q$ be given.
Apply the $\Delta$-System Lemma (see Lemma III.6.15 in \cite{MR2905394} for a proof) to get $A\in [\kappa]^\kappa$ such that $\{\mathrm{dom}(f_\alpha): \alpha\in A\}$ forms a $\Delta$-system with root $h\in [\kappa]^{<\omega_1}$ and for all $\alpha,\beta\in A$, $f_\alpha\restriction h = f_\beta\restriction h$. This is possible since for any $\omega\leq \beta<\kappa$, the collection of nice $P_\beta$-names for $\mathrm{Add}(\omega_1, 1)$ is contained in $V_{(2^{|P_\beta|})^+}$ and $h$ is countable.

Since $P_\kappa$ is c.c.c, we may find $\alpha<\beta\in A$ such that $p_\alpha$ and $p_\beta$ are compatible. Fix some $r\leq p_\alpha, p_\beta$. Let $f_\alpha + f_\beta$ be the function defined as follows: 
\begin{equation*}
f_\alpha + f_\beta(\gamma)=\begin{cases}
f_\alpha(\gamma) & \gamma\in h,\\
f_\alpha(\gamma) & \gamma\in \mathrm{supp}(f_\alpha)-h,\\
f_\beta(\gamma) & \gamma\in \mathrm{supp}(f_\beta)-h,\\
\emptyset^{V^{\mathrm{Add}(\omega,\gamma)}} & \text{otherwise}.
\end{cases}
\end{equation*}
It can be easily checked that $(r, f_\alpha+f_\beta)$ is an element in $Q$ extending both $(p_\alpha,f_\alpha)$ and $(p_\beta, f_\beta)$.
\end{proof}

We will recall some standard analysis of this poset.
Let $R$ be the poset consisting of functions $f$ with domain $\kappa$ of countable support such that for each $\alpha\in \kappa$, $f(\alpha)$ is a $P_\alpha$-name for an element in $\mathrm{Add}(\omega_1, 1)^{V^{P_\alpha}}$ and for any $f, g\in R$, $f\leq_R g$ iff $\mathrm{supp}(f)\supset \mathrm{supp}(g)$ and for each $\beta\in \mathrm{supp}(g)$, $\Vdash_{P_\beta} f(\beta)\leq g(\beta)$. Notice $R$ is countably closed.

\begin{claim}\label{firstCoordinate}
$Q$ projects onto $P_\kappa$.
\end{claim}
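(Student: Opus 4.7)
The natural candidate for the projection is the map
\[
\pi : Q \longrightarrow P_\kappa, \qquad \pi\bigl((p,f)\bigr) = p.
\]
So the plan is simply to verify that $\pi$ satisfies the three defining properties of a forcing projection: it is order-preserving, sends the maximum to the maximum, and has the ``refinement'' property that for every $(p,f) \in Q$ and every $q \leq_{P_\kappa} p$ there is $(p',f') \leq_Q (p,f)$ with $\pi((p',f')) \leq_{P_\kappa} q$.

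Order-preservation and preservation of the maximum element are both immediate from the definition of $\leq_Q$: the very first clause of $(p_1,f_1) \leq_Q (p_2,f_2)$ reads $p_1 \leq_{P_\kappa} p_2$. For the refinement property, given $(p,f) \in Q$ and $q \leq_{P_\kappa} p$, I will simply take the witness $(q,f)$. To see $(q,f) \in Q$, note the second coordinate is unchanged, so it is still a countably supported function sending each $\alpha$ to a $P_\alpha$-name in $\mathrm{Add}(\omega_1,1)^{V^{P_\alpha}}$. To see $(q,f) \leq_Q (p,f)$, the three conditions are: $q \leq_{P_\kappa} p$ (by hypothesis), $\mathrm{supp}(f) \supseteq \mathrm{supp}(f)$ (trivial), and for each $\alpha \in \mathrm{supp}(f)$, $q \restriction \alpha \Vdash_{P_\alpha} f(\alpha) \leq f(\alpha)$ (trivial). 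Since $\pi((q,f)) = q$, we are done.

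There is no real obstacle here; the point of recording the lemma is to set up the factorization $Q \simeq P_\kappa \ast (\dot Q / \dot G_{P_\kappa})$ that will be combined with the countably closed quotient $R$ in the subsequent analysis (exactly as in the standard treatment of Mitchell-style forcing, where the second coordinate behaves like a ``term'' forcing relative to the first). The only thing worth being careful about is that conditions in $Q$ are \emph{pairs}, and the projection involves no absorption of the second coordinate into the first, so the three projection clauses reduce to the corresponding clauses in $P_\kappa$ with $f$ held fixed.
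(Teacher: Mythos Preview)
Your proposal is correct and takes exactly the same approach as the paper, which simply states ``The projection onto the first coordinate works.'' You have merely spelled out the routine verification that the paper omits.
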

\begin{proof}
The projection onto the first coordinate works.
\end{proof}

\begin{claim}\label{projection}
$P_\kappa\times R$ projects onto $Q$.
\end{claim}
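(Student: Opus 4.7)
The plan is to verify that the identity-on-underlying-sets map $\pi: P_\kappa \times R \to Q$, sending $(p,f)$ to $(p,f)$, is the projection. Note that $Q$ and $P_\kappa\times R$ have literally the same underlying set of pairs, but different orders: the $R$-coordinate requires \emph{unconditional} refinement on the second coordinate, while the $Q$-order only requires refinement below $p'\restriction \alpha$. In particular, the $Q$-order is weaker, so $\pi$ is automatically order-preserving and sends the top condition to the top condition.

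The substantive content is the lifting property. Given $(p,f)\in P_\kappa\times R$ and $(p',f')\leq_Q (p,f)$, I will set $p''=p'$ and construct $f''$ by \emph{mixing} $f$ and $f'$ at each coordinate: for $\alpha\in \mathrm{supp}(f)\cup \mathrm{supp}(f')$, define $f''(\alpha)$ to be the $P_\alpha$-name such that below $p'\restriction \alpha$ it equals $f'(\alpha)$, and on any condition incompatible with $p'\restriction \alpha$ it equals $f(\alpha)$ (and put the trivial name elsewhere). This uses only the standard mixing lemma in $P_\alpha$.

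Two inequalities then need checking. First, $f''\leq_R f$: for $\alpha\in \mathrm{supp}(f)$, below $p'\restriction \alpha$ we have $f''(\alpha)=f'(\alpha)\leq f(\alpha)$ by the hypothesis that $(p',f')\leq_Q (p,f)$, while on conditions incompatible with $p'\restriction \alpha$ we have $f''(\alpha)=f(\alpha)$; hence $\Vdash_{P_\alpha} f''(\alpha)\leq f(\alpha)$ unconditionally, as $R$ demands. Second, $\pi(p'',f'')=(p'',f'')\leq_Q (p',f')$: for $\alpha\in \mathrm{supp}(f')$, by construction $p'\restriction \alpha\Vdash_{P_\alpha} f''(\alpha)=f'(\alpha)$, which is exactly the conditional refinement $Q$ requires. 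The support $\mathrm{supp}(f'')\subseteq \mathrm{supp}(f)\cup \mathrm{supp}(f')$ is countable and contains both $\mathrm{supp}(f)$ and $\mathrm{supp}(f')$.

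There is no real obstacle; the whole point is the slack built into the definition of $Q$. Because $Q$ only requires the refinement of the second coordinate below $p'\restriction \alpha$ and imposes no constraint on the name off $p'\restriction \alpha$, a single mixing name simultaneously serves as an \emph{unconditional} $R$-refinement of $f$ and a below-$p'$ $Q$-refinement of $f'$. This is why the natural product forcing projects onto the Mitchell-style iteration.
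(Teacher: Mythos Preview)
Your proof is correct and follows essentially the same approach as the paper: both take the identity as the projection map, set the first coordinate of the lift to be $p'$, and obtain the second coordinate by mixing $f'(\alpha)$ below $p'\restriction\alpha$ with $f(\alpha)$ elsewhere. The only difference is cosmetic: the paper compresses your explicit mixing construction into a one-line appeal to the maximality principle of forcing.
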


\begin{proof}
Consider the identity map. To see that it is a projection map, for each $(p,f)\in P_\kappa\times R$, and $(q,g)\leq_Q (p,f)$, we need to find $(p',f')\in P_\kappa\times R$ such that $(p',f')\leq_{P_\kappa\times R} (p,f)$ and  $(p',f')\leq_Q (q,g)$. Let $p'=q$. Let $f'$ be a function with support $\mathrm{supp}(g)$ and for each $\beta\in \mathrm{supp}(g)$, $q\restriction \beta \Vdash_{P_\beta} f'(\beta)=g(\beta)$ and $\Vdash_{P_\beta} f'(\beta)\leq f(\beta)$. We can find such a function by the maximality principle of forcing.
\end{proof}

\begin{claim}\label{countablyClosed}
For any countably closed $\mathbb{D}$ and any Baire tree $T$, $\mathbb{D}\times T$ is $\omega$-distributive.
\end{claim}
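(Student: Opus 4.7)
The plan is to apply Lemma \ref{capturing} with $\mathbb{P} := \mathbb{D}$ and $\mathbb{Q} := T$. Being countably closed, $\mathbb{D}$ is proper and hence countably capturing by Remark \ref{remarkProper}; and $T$ is $\omega$-distributive by the definition of Baire. So the hypotheses of Lemma \ref{capturing} are met, and the lemma tells us that $\Vdash_{\mathbb{D}} \check{T}$ is $\omega$-distributive if and only if $\Vdash_{T} \check{\mathbb{D}}$ is countably capturing.

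I would first verify the countably-capturing condition, which should be the easier direction to check directly. Since $T$ is Baire, forcing with $T$ adds no new $\omega$-sequences of ordinals; in particular, any countable decreasing sequence of conditions of $\mathbb{D}$ living in $V^{T}$ already belongs to $V$, and hence has a lower bound by countable closure of $\mathbb{D}$ in the ground model. So $\mathbb{D}$ remains countably closed, and therefore proper, and therefore countably capturing (again by Remark \ref{remarkProper}), after forcing with $T$. Lemma \ref{capturing} then transports this to give $\Vdash_{\mathbb{D}} \check{T}$ is $\omega$-distributive.

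To conclude that $\mathbb{D}\times T$ itself is $\omega$-distributive, I would take a generic $G\times H$ for $\mathbb{D}\times T$ over $V$ and a $(\mathbb{D}\times T)$-name $\dot{\tau}$ for an $\omega$-sequence of ordinals, factor $V[G\times H]=V[G][H]$ via the Product Lemma, and apply $\omega$-distributivity in two stages: first $(\dot{\tau})^{G\times H}\in V[G]$ because $T$ is $\omega$-distributive in $V[G]$ by the previous step, then $(\dot{\tau})^{G\times H}\in V$ because $\mathbb{D}$ is countably closed in $V$ and therefore adds no new $\omega$-sequences.

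I do not expect a serious obstacle here; the main conceptual content is the symmetric formulation of Lemma \ref{capturing}, which lets us verify $\omega$-distributivity of $T$ after forcing with $\mathbb{D}$ by checking the (easier) countably-capturing condition with the roles of the two posets swapped. Everything after that reduces to the standard two-step analysis of a product extension.
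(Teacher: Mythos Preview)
Your proof is correct, and its key observation---that $\mathbb{D}$ remains countably closed after forcing with $T$ because $T$ adds no new $\omega$-sequences---is exactly the paper's argument. However, you take an unnecessary detour through Lemma~\ref{capturing}. The paper's proof is a single line: once you know $\Vdash_T \mathbb{D}$ is countably closed (hence $\omega$-distributive), factor the product the \emph{other} way, as $V[H][G]$ with $H\subset T$ generic first. Then any $\omega$-sequence in $V[H][G]$ lies in $V[H]$ (since $\mathbb{D}$ is $\omega$-distributive in $V[H]$), and hence in $V$ (since $T$ is Baire). There is no need to swap the order via Lemma~\ref{capturing} and then swap it back in the two-stage analysis; you already had what you needed at the end of your second paragraph.
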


\begin{proof}
It immediately follows from the fact that $\Vdash_T \mathbb{D}$ is countably closed, as $T$ is Baire.
\end{proof}

We need similar product analysis of the quotient forcing. Let $\delta<\kappa$ be inaccessible. We can truncate $Q$ to $Q\restriction \delta$ in the obvious way. Let $G_\delta$ be generic for $Q\restriction \delta$. Let $H_\delta$ be the projection of $G_\delta$ to the first coordinate, which is $V$-generic for $P_\delta$.

\begin{claim}\label{quotient}
Let $T\in V[G_\delta]$ be a Baire tree. Then in $V[G_\delta]$, $\Vdash_{Q/G_\delta} T$ is a Baire tree.
\end{claim}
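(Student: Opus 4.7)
The strategy is to mimic the product-decomposition analysis used for $Q$ itself in Claims \ref{firstCoordinate}--\ref{projection}, now at the level of the quotient, and then combine it with Claim \ref{countablyClosed} and a relativized version of Lemma \ref{indestructible}.

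First, working in $V[G_\delta]$, I would produce a decomposition of $Q/G_\delta$ as a projection of $(P_\kappa/H_\delta) \times R^*$, where $P_\kappa/H_\delta$ is the natural tail of the c.c.c iteration $P_\kappa$ past $\delta$, and $R^*$ is the ``tail'' of the term forcing $R$ consisting of countably-supported functions on $[\delta,\kappa)$ extending the $R$-component already frozen by $G_\delta$. The argument is a direct adaptation of Claim \ref{projection} via the maximality principle of forcing, and $R^*$ is countably closed in a suitable sense (in $V$, which is enough for the Easton-style considerations below).

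Second, it suffices to show $T$ remains Baire after forcing with $(P_\kappa/H_\delta) \times R^*$ over $V[G_\delta]$; I would factor this product. Forcing with $R^*$ first, Claim \ref{countablyClosed} applied with $\mathbb{D} = R^*$ and the Baire tree $T$ yields that $R^* \times T$ is $\omega$-distributive in $V[G_\delta]$, so $T$ remains Baire in $V[G_\delta][R^*]$. Subsequently, in $V[G_\delta][R^*]$ I would force with $P_\kappa/H_\delta$: by Lemma \ref{capturing} combined with Remark \ref{remarkProper}, preservation of Baireness of $T$ reduces to showing $T \Vdash P_\kappa/H_\delta$ is c.c.c. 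This is done by rerunning the induction in the proof of Lemma \ref{indestructible} for the tail iteration: at limit stages the $\Delta$-system argument carries over (the Baire tree $T$ preserves $\omega_1$), and at successor stages $\alpha=\beta+1$ one uses Theorem \ref{Baumgartner} together with the absoluteness of $\sigma(\mathbb{R})^{V[H_\beta]}$ across the relevant $T$-extensions (Baireness of $T$ adds no reals) and Observation \ref{observation}(3).

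The main obstacle is the careful tracking of intermediate models needed to justify the application of Claim \ref{countablyClosed} in $V[G_\delta]$ and to verify that at each stage of the inductive argument the reals used to define the specializing poset are computed consistently; both points boil down to Easton's-lemma-style interactions between c.c.c and countably closed factors, and they mirror the ground-model analysis of $Q$ relativized to the quotient.
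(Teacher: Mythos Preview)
Your overall architecture matches the paper's: decompose $Q/G_\delta$ as a projection of a c.c.c tail $E=(P_{[\delta,\kappa)})^{V[H_\delta]}$ times a countably closed term forcing $R^*$, then combine Claim~\ref{countablyClosed} with (a relativized form of) Lemma~\ref{indestructible}. However, there is a real gap in your handling of $R^*$.

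You assert that $R^*$ is ``countably closed in a suitable sense (in $V$, which is enough\ldots)'' and then apply Claim~\ref{countablyClosed} in $V[G_\delta]$. This is not justified: $Q\restriction\delta$ is not countably closed (it has a c.c.c part adding reals), so $V[G_\delta]$ has countable sequences not in $V$, and countable closure in $V$ does not automatically propagate. What is actually needed is that $R^*$ is countably closed in $V[G_\delta]$. The paper obtains this by observing that the quotient $D=(Q\restriction\delta)/H_\delta$ is $\omega$-distributive over $V[H_\delta]$ (this follows from the product analysis in Claim~\ref{projection}, since $D$ is covered by the countably closed factor $R_\delta$, which is $\omega$-distributive over $V[H_\delta]$ by Easton). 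Hence $V[G_\delta]\models {}^\omega V[H_\delta]\subset V[H_\delta]$, and since $R^*\in V[H_\delta]$ is countably closed there, it remains so in $V[G_\delta]$. You flag this as ``the main obstacle'' but do not supply this argument; it is the essential missing step.

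A second, smaller point: rather than ``rerunning'' the induction of Lemma~\ref{indestructible} in $V[G_\delta][R^*]$---which forces you to check at each stage that $\sigma(\mathbb{R})$ is computed consistently across $V[H_\delta][H_{[\delta,\beta)}]$ and $V[G_\delta][R^*][H_{[\delta,\beta)}]$---the paper applies Lemma~\ref{indestructible} (via its Remark, valid for any $\omega$-distributive forcing) once, in $V[H_\delta]$, with the single $\omega$-distributive forcing $D*(\dot T\times\dot R^*)$. This yields $\Vdash_{D*(\dot T\times\dot R^*)} E$ is c.c.c directly, i.e., $\Vdash_{T\times R^*} E$ is c.c.c in $V[G_\delta]$; then Lemma~\ref{capturing} and the projection finish the proof. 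Your reals-consistency checks would eventually reduce to this same statement, so the paper's route is both shorter and avoids the bookkeeping you anticipate.
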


\begin{proof}

In $V[G_\delta]$, we will show that, similarly as in Claim \ref{projection}, $Q/G_\delta$ is a projection of $(P_{[\delta, \kappa)})^{V[H_\delta]}\times R^*$, where $R^*$ is some countably closed poset in $V[G_\delta]$. Let $E=(P_{[\delta, \kappa)})^{V[H_\delta]}$. Let us give a more detailed description of what $R^*$ is and what the projection is. The elements in $R^*$  are countably supported functions $f$ with domain $[\delta,\kappa)$ such that for each $\beta\in \mathrm{dom}(f)$, $f(\beta)\in V[H_\delta]$ is a $(P_{[\delta,\beta)})^{V[H_\delta]}$-name for an element in $\mathrm{Add}(\omega_1,1)$.
In $V[G_\delta]$, $f\leq_{R^*}g$ if $\mathrm{supp}(f)\supset \mathrm{supp}(g)$ and for each $\gamma\in \mathrm{supp}(g)$, $\Vdash_{E} f(\gamma)\leq_{\mathrm{Add}(\omega_1,1)} g(\gamma)$.

To see that $R^*$ is countably closed in $V[G_\delta]$, it is sufficient to notice that the quotient forcing $D=(Q\restriction \delta)/H_\delta$ is $\omega$-distributive, which is due to our product analysis in Claim \ref{projection}. In particular, $V[G_\delta]\models {}^\omega V[H_\delta]\subset V[H_\delta]$, $\mathbb{R}^*\in V[H_\delta]$ and $R^*$ is countably closed in $V[G_\delta]$. 

Notice that in $V[G_\delta]$, $Q/G_\delta $ is forcing equivalent to the poset $B$ such that $(s,f)\in B$ iff $s\in E$ and $f$ is a countably supported function with domain $[\delta,\kappa)$ and $range(f)\subset V[H_\delta]$ such that for any $\alpha\in \mathrm{supp}(f)$, $\Vdash_{E\restriction \alpha} f(\alpha)\in \mathrm{Add}(\omega_1,1)$. The order in $B$ is given by $(s',f')\leq (s,f)$ iff $s'\leq_{E} s$, $\mathrm{supp}(f')\supset \mathrm{supp}(f)$ and for each $\alpha\in \mathrm{supp}(f)$, $s'\restriction \alpha\Vdash_{E\restriction \alpha} f'(\alpha)\leq_{\mathrm{Add}(\omega_1,1)} f(\alpha)$. By a similar argument as in Claim \ref{projection}, we check that $id: E\times R^* \to B$ is a projection in $V[G_\delta]$. To see this, for each $(s,f)\in E\times R^*$, and each $(s',f')\leq_B (s,f)$, we need to find $(s'',f'')\leq_{E\times R^*} (s,f)$ such that $(s'',f'')\leq_B (s',f')$. Let $s''=s'$ and $\mathrm{supp}(f'')=\mathrm{supp}(f')$. For each $\alpha\in \mathrm{supp}(f'')$, define $f''(\alpha)\in V[H_\delta]$ such that $\Vdash_{E\restriction \alpha} f''(\alpha)\leq f'(\alpha)$ and $s'\restriction \alpha\Vdash_{E\restriction \alpha} f''(\alpha)\leq f'(\alpha)$. This can be achieved by applying the maximality principle in $V[H_\delta]$ to $E\restriction \alpha$.

In $V[H_\delta]$, let $\dot{R^*}$ be the $D$-name for the countably closed poset as above. In $V[H_\delta]$, let $\dot{T}$ be a $D$-name for a Baire tree. Then $D*(\dot{T}\times \dot{R^*})$ is $\omega$-distributive by Claim \ref{countablyClosed}. By Lemma \ref{indestructible}, in $V[H_\delta]$ we have $\Vdash_{D*(\dot{T}\times \dot{R^*})} E$ is c.c.c. This means in $V[G_\delta]$, $\Vdash_{T\times R^*} E$ is c.c.c. In particular, we know that in $V[G_\delta]$, $T\times R^*$ is Baire and $E$ is c.c.c. By Lemma \ref{capturing} and Remark \ref{remarkProper}, $\Vdash_E R^*\times T$ is $\omega$-distributive, hence $\Vdash_E \Vdash_{R^*} T$ is Baire. Since in $V[G_\delta]$, $E\times R^*$ projects onto $Q/G_\delta$, we know $\Vdash_{Q/G_\delta} T$ is Baire.

\end{proof}

\begin{proof}[Proof of Corollary \ref{separate}]
In fact, we show that in the forcing extension by $Q$, $s\mathrm{RC}^B$ holds and any $\aleph_1$-subtree of $\sigma(\mathbb{R})$ is special. The latter clearly implies the failure of $\mathrm{RC}$ by Observation \ref{observation}. Claim $\ref{projection}$ implies that $\omega_1$ is preserved.

Let $G$ be generic for $Q$. As $Q$ projects onto $P_\kappa$ (Claim \ref{firstCoordinate}), we can find $H\in V[G]$ that is $V$-generic for $P_\kappa$. Let $T\in V[G]$ be a Baire tree of size $\theta$. Without loss of generality, we may assume $T=(\theta, <_T)$. Let $\dot{T}$ be a $Q$-name for $T$. Let $j: V\to M$ witness that $\kappa$ is $\lambda$-supercompact for some sufficiently large regular cardinal $\lambda>\theta$. We may choose $\lambda$ large enough so that it is larger than the cardinality of any nice $Q$-name of a subset of $\theta$. Since $Q\subset V_\kappa$ is $\kappa$-c.c., we see that $j\restriction Q=id\restriction Q$ is a complete embedding. Hence we can view $Q$ as an initial segment of $j(Q)$. In fact, $Q=j(Q)\restriction \kappa$.

By the choice of $\lambda$, we know $\dot{T}\in M$. Hence $T\in M[G]$. By Claim \ref{quotient}, we know that in $M[G]$, $\Vdash_{j(Q)/G} T$ is Baire. Let $K\subset j(Q)/G$ be generic over $M[G]$, then we can lift $j$ to an elementary embedding $j^+: V[G]\to M[G*K]$. In $M[G*K]$, $|T|=\theta<j^+(\kappa)=(\omega_2)^{M[G*K]}$.  Since $j''\theta\in M[G*K]$ and $(T, <_T)$ is isomorphic to $(j'' \theta,<_{j(T)})$, we know that $M[G*K]\models$ there exists a subtree $A\subset j(T)$ such that $|A|\leq \aleph_1$ and $A$ is Baire. By the elementarity of $j^+$, the same statement is true in $V[G]$. We have then shown that $V[G]\models s\mathrm{RC}^B$.

Finally we show that any $\aleph_1$-subset of $\sigma(\mathbb{R})$ is special. Let $A\subset \sigma(\mathbb{R})$ be a $\aleph_1$-subset in $V[G]$. Note that $Q/H$ is $\omega$-distributive, so $(\sigma(\mathbb{R}))^{V[G]}=(\sigma(\mathbb{R}))^{V[H]}$. Since $Q$ is $\kappa$-c.c. and so is $P_\kappa$, $Q/H$ is $\kappa$-c.c in $V[H]$. Thus there exists $A'\in V[H]$ and $A'\subset \sigma(\mathbb{R})$ of size $<\kappa$ such that in $V[H]$, $\Vdash_{Q/H} \dot{A}\subset A'$, where $\dot{A}\in V[H]$ is a $Q/H$-name for $A$. As $A'$ is special in $V[H]$ (hence also in $V[G]$) and $A\subset A'$ in $V[G]$, $A$ is also special in $V[G]$.

\end{proof}

\section{Consistency of $s\mathrm{RC}^B + \mathrm{MA}_{\omega_1}(\mathrm{BIP})$}\label{iterate}

\begin{definition}
Fix a poset $P$, a sufficiently large cardinal $\lambda$, a countable $M\prec H(\lambda)$ containing $P$ and a countable sequence $\langle D_n: n\in \omega\rangle$ of dense subsets of $P\cap M$. 
We say $P$ is \emph{semi-strongly proper for $M$ and $\langle D_n: n\in \omega\rangle$} if for any $p\in P\cap M$, there exists $q\leq p$, such that for all $n\in \omega$, $q\Vdash D_n\cap \dot{G}\neq \emptyset$, where $\dot{G}$ is the canonical name of a generic filter. We say such $q$ is \emph{semi-strongly generic for $M$ and
$\langle D_n: n\in \omega\rangle$} (or just $\langle D_n: n\in \omega\rangle$ if $M$ is clear from the context).
\end{definition}

Note that in the definition above we do not require $D_n = D\cap M$ for some $D\in M$.

\begin{remark}
In the following, when the model $M$ is clear from the context, we will just say $P$ is semi-strongly proper for $\langle D_n: n\in \omega\rangle$.
\end{remark}

\begin{definition}\label{semi}
For any poset $P$, $P$ is \emph{semi-strongly proper} if for any sufficiently large $\lambda$, any $M\prec H(\lambda)$ countable containing $P$ and any countable sequence $\langle D_n: n\in \omega\rangle$ of dense subsets of $P\cap M$, $P$ is semi-strongly proper for $M$ and $\langle D_n: n\in \omega\rangle$.
\end{definition}

\begin{remark}
Note that the class of semi-strongly proper forcings here properly contains the class of strongly proper forcings in the sense of Mitchell \cite{MR2452816}. Strongly proper forcings always add Cohen reals (see the remark after Definition 1.8 in \cite{sidecon}) while any countably closed forcing will be semi-strongly proper. To see the latter, suppose $P$ is countably closed and $M, \langle D_n: n\in \omega\rangle, p$ are given as in Definition \ref{semi}. We can build a decreasing sequence $\langle p_n\in P\cap M: n\in \omega\rangle$ such that $p_0=p$ and $p_{n+1}\in D_n$ for all $n\in \omega$. The reason why we can do this is that each $D_n$ is a dense subset of $P\cap M$ for any $n\in \omega$. Any lower bound $p^*$ for $\langle p_n: n\in \omega\rangle$ will be the desired semi-strongly generic condition for $M$ and $\langle D_n: n\in \omega\rangle$. Shelah (\cite{MR1623206}, Chapter IX, Definition 2.6 in page 441) used the name ``strongly proper forcings'' to refer to what we call ``semi-strongly proper forcings'' here. We do this to avoid confusion.
\end{remark}

In general, Baire trees are preserved when forcing with semi-strongly proper posets.

\begin{lemma}\label{BaireAndStronglyProper}
Let $T$ be a Baire tree and $P$ be a semi-strongly proper poset. Then $\Vdash_T P$ is semi-strongly proper. In particular, after Remark \ref{remarkProper} and Lemma \ref{capturing}, $\Vdash_P T$ is Baire. 
\end{lemma}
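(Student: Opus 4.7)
The approach is to verify semi-strong properness of $P$ in $V[G]$ for any $T$-generic $G$ by pulling all the relevant data back to $V$ and invoking semi-strong properness of $P$ there. First, in $V[G]$ fix a countable $M\prec H(\lambda)^{V[G]}$ with $P\in M$, a countable sequence $\langle D_n:n\in\omega\rangle$ of dense subsets of $P\cap M$, and $p\in P\cap M$. Since $T$ is $\omega$-distributive, after fixing in $V$ a well-ordering of some $V_\alpha$ containing $P$, any countable subset of $V_\alpha$ in $V[G]$ is coded by a countable sequence of ordinals and must lie in $V$; in particular $X:=P\cap M$, the sequence $\langle D_n:n\in\omega\rangle$, and $p$ all lie in $V$, and $X$ is countable in $V$ since $T$ preserves $\omega_1$. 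The statement ``$D_n$ is dense in $X$'' is absolute.

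Next, I would work entirely in $V$: choose a sufficiently large regular $\theta$ and a countable $N\prec H(\theta)^V$ containing $P,X,p$ and $\langle D_n:n\in\omega\rangle$, arranged so that $X\subseteq N$ (possible since $X$ is countable in $V$). A naive use of $D_n$ itself as a dense set in $P\cap N$ fails because $P\cap N$ can contain conditions with no $D_n$-extension, so instead I would introduce the auxiliary set
\[
D_n^{\sharp}:=\{r\in P\cap N:\exists d\in D_n,\ r\leq_P d\}\ \cup\ \{r\in P\cap N:r\perp t\text{ for every }t\in X\}.
\]
Density of $D_n^{\sharp}$ in $P\cap N$ follows from elementarity of $N$: given $s\in P\cap N$, if $s$ is incompatible with every element of $X$ then $s$ lies in the second clause; otherwise some $t\in X$ is compatible with $s$, a common extension $r_1\in P\cap N$ of $s,t$ exists by elementarity, and $r_1$ in turn admits a common extension with some $d\in D_n$ satisfying $d\leq t$ (using density of $D_n$ in $X$, and noting $d\in N$ since $D_n\subseteq X\subseteq N$), producing a witness in the first clause.

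Semi-strong properness of $P$ in $V$ applied to $N$ and $\langle D_n^{\sharp}:n\in\omega\rangle$ then furnishes $q\leq p$ in $V$ with $q\Vdash_P D_n^{\sharp}\cap\dot{H}\neq\emptyset$ for every $n$. The key point is that $q$ actually rules out the second clause: if $r\in D_n^{\sharp}\cap\dot{H}$ were incompatible with every element of $X$, then in particular $r\perp p$, yet $q\leq p$ forces $p\in\dot{H}$, so $r$ and $p$ would both lie in the generic filter, a contradiction. Hence any witness $r$ comes from the first clause, placing some $d\in D_n$ with $r\leq d$ into $\dot{H}$, so $q\Vdash_P D_n\cap\dot{H}\neq\emptyset$. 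Absoluteness of the forcing relation for the ground-model poset $P$ transfers this to $V[G]$, yielding the desired semi-strongly generic condition. The ``in particular'' clause then follows because semi-strong properness of $P$ in $V[G]$ implies properness, hence countable capturing via Remark \ref{remarkProper}, after which Lemma \ref{capturing} applied with $\mathbb{P}=P$ and $\mathbb{Q}=T$ yields $\Vdash_P T$ is $\omega$-distributive.

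The main obstacle is the mismatch between density of each $D_n$ inside $X=P\cap M$ (the data inherited from $V[G]$) and density of a corresponding sequence inside the possibly larger $P\cap N$ (where semi-strong properness in $V$ must be invoked); the auxiliary set $D_n^{\sharp}$ resolves this by padding with an ``incompatible with all of $X$'' clause whose use is then automatically excluded by the constraint $q\leq p\in X$. Every other step -- the descent to $V$ via Baireness, the construction of $N$, and the concluding appeal to Lemma \ref{capturing} -- is routine.
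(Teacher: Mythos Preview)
Your density argument for $D_n^\sharp$ has a genuine gap. You assert that the common extension $r_1$ of $s$ and $t$ ``in turn admits a common extension with some $d\in D_n$ satisfying $d\leq t$,'' but this is not justified: $r_1\leq t$ and $d\leq t$ only tells you both extend $t$, and two such extensions may well be incompatible. Concretely, let $P$ be a normal Suslin tree (which \emph{is} semi-strongly proper: given $p\in P\cap M=P_{<\delta}$ and dense $D_n\subseteq P_{<\delta}$, build $p\leq_T p_1\leq_T p_2\leq_T\cdots$ with $p_{n+1}\in D_n$ and take $q$ at level $\delta$ above the sequence). Then $X=P\cap M=P_{<\delta}$. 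Fix a cofinal branch $b$ through $P_{<\delta}$ and set $D_0=P_{<\delta}\setminus b$, which is dense in $X$ by splitting. Now choose $N\prec H(\theta)^V$ with $X\subseteq N$; necessarily $\delta<N\cap\omega_1$, so $P\cap N\supsetneq X$ contains the node $s\in P_\delta$ sitting above $b$. Every $r\leq_P s$ has exactly $b$ as its set of predecessors inside $P_{<\delta}$, so $r$ extends no element of $D_0$ (first clause fails), yet $r\leq_P b_0$ for every $b_0\in b\subseteq X$ (second clause fails). Thus $D_0^\sharp$ is not dense below $s$ in $P\cap N$.

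The paper sidesteps this entirely. Rather than picking an arbitrary $M\prec H(\lambda)^{V[G]}$ and then manufacturing a separate $N$ in $V$, it restricts at the outset to those $M$ satisfying $M':=M\cap H(\lambda)^V\prec H(\lambda)^V$ (a club condition). The point is that then $P\cap M=P\cap M'$ \emph{exactly}, since $P\subseteq H(\lambda)^V$; so the given $D_n$ are already dense subsets of $P\cap M'$, and one applies semi-strong properness in $V$ directly for $M'$ and $\langle D_n\rangle$ with no auxiliary padding. Your construction of $N$ loses this equality $P\cap N=X$, which is precisely what makes the padding necessary and, as the example shows, ineffective. If you want to repair your route, you would need to arrange $P\cap N=X$, but that forces $N\cap\omega_1=M\cap\omega_1$ together with $\langle D_n\rangle\in N$, and carrying this out amounts to rediscovering $M'$.
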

\begin{proof}
Let $H\subset T$ be generic over $V$. In $V[H]$, let $\lambda$ be large enough and let countable $M\prec H(\lambda)$ be such that $M\cap H(\lambda)^V \prec H(\lambda)^V$ contains $P$. Let $p\in M\cap P$ and a sequence of dense subsets of $M\cap P$, say $\bar{D}=\langle D_n: n\in \omega\rangle$ be given. For any $n\in \omega$, since $D_n\subset M\cap P = (M \cap H(\lambda)^V)\cap P$ and is countable, by the countable distributivity of $T$, we know that $D_n \in V$. Applying the countable distributivity of $T$ one more time, we get that $\langle D_n: n\in \omega\rangle\in V$. Since in $V$, $P$ is semi-strongly proper for $M\cap H(\lambda)^V$, there exists $q\leq p$ that is semi-strongly generic for $M \cap H(\lambda)^V$ and $\bar{D}$, namely $q\Vdash_P D_n \cap \dot{G}\neq \emptyset$ for all $n\in \omega$. But this property persists to $V[H]$ by the absoluteness of the definability of forcing.
 
Notice what we have shown is that in $V[H]$, for sufficiently large $\lambda$, there exists a club subset of $[H(\lambda)]^\omega$ witnessing the semi-strong properness of $P$. By the standard trick (see for example Theorem 2.13 in \cite{MR2768684}), we can eliminate the club in the statement.
 
\end{proof}

We will use the theorem due to Feng in the following remark.

\begin{theorem}[Feng \cite{MR1760586}, Theorem 2.3]\label{feng}
Fix a poset $\mathbb{P}$.
Then $\mathbb{P}$ is $\omega$-distributive iff for any $p\in \mathbb{P}$, for any sufficiently large regular cardinal $\lambda$, the following set is stationary in $P_{\omega_1}(H(\lambda))$: 
$$S_p=\{M \prec H(\lambda): |M|=\aleph_0, \{\mathbb{P},p\}\in M, \exists p^*\leq p \ \forall \text{dense} \  D \subset \mathbb{P} $$ $$(D\in M)\rightarrow (\exists p'\in D\cap M \ p^*\leq_{\mathbb{P}} p')\}.$$
\end{theorem}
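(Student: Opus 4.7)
The plan is to prove both directions separately. For $(\Leftarrow)$, assume each $S_p$ is stationary and fix a countable family $\{U_n: n<\omega\}$ of dense open subsets of $\mathbb{P}$ and $p\in \mathbb{P}$. By stationarity of $S_p$, pick $M\in S_p$ with $\{U_n: n<\omega\}\in M$; the witness $p^*\leq p$ supplied by $M\in S_p$ satisfies $p^*\leq p'_n$ for some $p'_n\in U_n\cap M$, and openness of $U_n$ then forces $p^*\in U_n$. Hence $p^*\in \bigcap_n U_n$ lies below $p$, showing $\bigcap_n U_n$ is dense below $p$.

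For $(\Rightarrow)$, fix $p\in \mathbb{P}$, a sufficiently large regular $\lambda$, and an arbitrary $F:[H(\lambda)]^{<\omega}\to H(\lambda)$; the goal is to produce an $F$-closed $M\in S_p$. The strategy is to descend into a generic extension, carve out an elementary submodel whose trace on $V$ is automatically ``semi-strongly generic,'' and extract a witness $p^*$ from the Boolean completion. Force with $\mathbb{P}$ below $p$ to obtain a $V$-generic $G$; by $\omega$-distributivity no new $\omega$-sequences of elements of $V$ are added. In $V[G]$ fix a regular $\theta\gg \lambda$ and pick a countable $N\prec H(\theta)^{V[G]}$ closed under $F$ and containing $F,\mathbb{P},p,G,\lambda,H(\lambda)^V$. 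Set $M:=N\cap H(\lambda)^V$; elementarity yields $M\prec H(\lambda)^V$, $F$-closure of $N$ transfers to $M$ since $F$ maps $[H(\lambda)^V]^{<\omega}$ into $H(\lambda)^V$, and $\mathbb{P},p,F\in M$. By $\omega$-distributivity, $M$ is countable in $V$ and lies in $V$.

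To build $p^*$, replace each relevant dense $D\in M$ by its downward closure to assume without loss of generality that each $D$ is dense open, and enumerate these as $\langle D_n: n<\omega\rangle$. Recursively pick $q_n\in G\cap D_n\cap M$ with $q_n\leq q_{n-1}$: by genericity and openness of $D_n$, the set $G\cap D_n\cap\{q: q\leq q_{n-1}\}$ is nonempty, and since $G,D_n,q_{n-1}\in N$, elementarity of $N$ delivers such a $q_n\in N\cap \mathbb{P}\subseteq N\cap H(\lambda)^V=M$. By $\omega$-distributivity, $\langle q_n: n<\omega\rangle\in V$. In the Boolean completion $B(\mathbb{P})$ computed in $V$, the Boolean value of ``$\forall n\ q_n\in \dot{G}$'' is exactly $\bigwedge_n q_n$, and this is necessarily nonzero, for otherwise $V[G]$ would satisfy $\exists n\ q_n\notin G$, contradicting our choice of $q_n$. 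Density of $\mathbb{P}$ in $B(\mathbb{P})^+$ produces $p^*\in \mathbb{P}$ with $p^*\leq \bigwedge_n q_n$, so $p^*\leq q_n\leq p$ for all $n$; a final elementarity step inside $M$ then lifts each $q_n$ in the downward closure of the original dense set $D$ to an honest element $p'\in D\cap M$ with $p^*\leq q_n\leq p'$, verifying $M\in S_p$. The main delicate point is the Boolean-value computation: it hinges on invoking $\omega$-distributivity to return the $\omega$-sequence $\langle q_n\rangle$ to $V$ so that the infimum is taken in $B(\mathbb{P})^V$ and a genuine $\mathbb{P}$-condition $p^*$ below it can be extracted.
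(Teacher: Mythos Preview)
The paper does not prove this theorem; it is quoted from Feng \cite{MR1760586} and only used (in the remark immediately following it) as a black box. So there is no in-paper argument to compare against.

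That said, your proof is sound in both directions, with one caveat worth flagging. In the $(\Rightarrow)$ direction you pass from $i(p^*)\le \bigwedge_n i(q_n)$ in $B(\mathbb P)$ to $p^*\le_{\mathbb P} q_n$. This inference is valid only when $\mathbb P$ is separative (so that $i$ is order-reflecting). For a non-separative $\mathbb P$ the step fails, and indeed the statement as literally written can break: take $\mathbb P=(\omega,\ge)$, which adds nothing and is thus $\omega$-distributive, yet for any $p^*\in\omega$ the dense set $[p^*+1,\omega)$ lies in every relevant $M$ and contains no $p'$ with $p^*\le_{\mathbb P} p'$, so $S_p=\emptyset$. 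This is a trivial (atomic) poset, and Feng's result is of course intended for separative/non-trivial $\mathbb P$; under that implicit hypothesis your Boolean-completion step is exactly right. The applications in the paper are to trees of the form $T(S)$, which are separative, so the issue is moot there.

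The overall architecture of your $(\Rightarrow)$ argument---force below $p$, take a countable $N\prec H(\theta)^{V[G]}$, set $M=N\cap H(\lambda)^V$, use elementarity of $N$ to place witnesses from $G\cap D_n$ inside $M$, and use $\omega$-distributivity to pull the sequence $\langle q_n\rangle$ and $M$ itself back into $V$---is the natural one and matches how this result is typically proved.
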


\begin{remark}
The reader may notice that we restrict our attention to proper forcings. This is natural since any forcing that preserves all Baire trees is necessarily proper. To see this, let $P$ be a given forcing that preserves all Baire trees. Let $\lambda\geq \omega_1$ be a given cardinal and $S\subset [\lambda]^\omega$ be a stationary subset. We may assume for any $x\in S$, $x\cap \omega_1\in \omega_1$. We need to show $P$ preserves the stationarity of $S$. Suppose for the sake of contradiction that there are a $P$-name  $\dot{f}: \lambda^{<\omega}\to \lambda$ and $p\in P$ such that $p\Vdash ``\forall x\in S$, $x$ is not closed under $\dot{f}$''.
For any $A, B\in [\lambda]^\omega$, define $A\lesssim B$ iff $A\subset B$, $A\cap \omega_1<B\cap \omega_1$ and $\sup (A) < \sup (B)$.

Consider the tree $T(S)$ defined as follows: $t\in T(S)$ iff there exists $\gamma<\omega_1$ such that $t: \gamma+1\to S$ is a continuous function satisfying that for any $\alpha<\beta\leq \gamma$, $t(\alpha)\lesssim t(\beta)$. The order in $T(S)$ is end extension.
For each $t\in T(S)$, let $\max(t)$ denote the $\lesssim$-maximal element in the image of $t$.

First note that in $V$, $T(S)$ is Baire (Todor\v{c}evi\'{c} \cite{MR657114}). Let $G\subset P$ containing $p$ be generic over $V$. By the hypothesis, $(T(S))^V$ is Baire in $V[G]$. We work in $V[G]$. Let $\theta$ be a sufficiently large regular cardinal. By Theorem \ref{feng}, $L=_{def}\{M\in [H(\theta)]^\omega: M\prec H(\theta), \exists t'\in (T(S))^V, \forall \text{dense }D\subset (T(S))^V (D\in M\rightarrow \exists t\in D\cap M \ t\leq_{(T(S))^V} t')\}$ is a stationary subset of $[H(\theta)]^\omega$. Fix $M\in L$ containing $\lambda, (T(S))^V$ such that $M\cap \lambda$ is closed under $f=(\dot{f})^G$. Let $\delta=_{def} M\cap \omega_1$.
For each $\alpha\in M\cap \lambda$ and $\beta\in \delta$, consider $D_{\alpha,\beta}=\{t\in (T(S))^V: \alpha\in \max (t), \mathrm{dom}(t)\geq \beta+1\}$ and notice that each $D_{\alpha,\beta}$ is a dense subset of $(T(S))^V$ in $M$. Let $t'\in (T(S))^V$ be such that for any $(\alpha,\beta)\in (M\cap \lambda) \times \delta$, there exists some $t_{\alpha,\beta}\in D_{\alpha,\beta}\cap M$ such that $t_{\alpha,\beta} \leq_{(T(S))^V} t'$. In particular, this implies that for any $\beta\in \delta$, $t'(\beta)\in M$ hence $t'(\beta)\subset M$ and for any $\alpha\in M\cap \lambda$, there is some $\beta\in \delta$ such that $\alpha\in t'(\beta)$. Consequently, by continuity, $t'(\delta)=\bigcup_{\beta<\delta} t'(\beta) = M\cap \lambda$. Since $t'\in (T(S))^V$, we know that $M\cap \lambda\in S$. Hence in $V[G]$, we have found an element in $S$ that is closed under $f$, which is a contradiction.
\end{remark}

\begin{remark}
It is a theorem of Shelah (Theorem 2.7 and Remark 2.7A in \cite{MR1623206}, Chapter IX, Page 441) that countable support iteration of semi-strongly proper forcings is semi-strongly proper. As a result, we can get the consistency of $s\mathrm{RC}^B + \mathrm{MA}_{\omega_1}(\text{semi-strongly proper})$ rather easily. To see this, start with a ground model with a supercompact cardinal $\kappa$. We can perform a countable support iteration of semi-strongly proper forcings guided by a Laver function of length $\kappa$. It follows as in the standard proof of the consistency of $\mathrm{PFA}$ (see Section 24 in \cite{MR2768691} for a proof) that the final model satisfies $\mathrm{MA}_{\omega_1}(\text{semi-strongly proper})$. The reason why the final model satisfies $s\mathrm{RC}^B$ is because any tail of the iteration is semi-strongly proper by the aforementioned theorem of Shelah, which in turn implies that it preserves Baire trees by Lemma \ref{BaireAndStronglyProper}. A standard reflection argument, which can be easily adapted from the proof of Theorem \ref{iterationSacks} presented at the end of this section, shows that $\mathrm{RC}^B$ also holds in this model.

However, it is not clear if $\mathrm{MA}_{\omega_1}(\text{semi-strongly proper})$ is strong enough to ensure the failure of $\mathrm{RC}$ because of the following restriction:  
\begin{claim}
The Baumgartner specializing forcing $P=_{def}S(\sigma(\mathbb{R}))$, as defined in Section \ref{separation} before Theorem \ref{Baumgartner}, is not semi-strongly proper.
\end{claim}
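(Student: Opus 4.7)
The plan is to exhibit explicit witnesses to the failure of semi-strong properness: a countable elementary submodel $M \prec H(\lambda)$ containing $P$, a condition $p_0 \in P \cap M$, and a countable sequence $\langle D_n : n \in \omega \rangle$ of dense subsets of $P \cap M$ such that no extension $q \leq p_0$ in $P$ forces $\dot{G} \cap D_n \neq \emptyset$ for every $n$. I take $p_0 = \emptyset$.

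The setup exploits a real outside $M$. First I would pick $r^* \in \mathbb{R} \setminus M$ and a strictly increasing sequence $(q_n) \subseteq M \cap \mathbb{R}$ with $q_n \to r^*$, and define $t_n = \{q_0, \ldots, q_n\} \in \sigma(\mathbb{R}) \cap M$ together with $t_\infty = \{q_n : n \in \omega\} \in \sigma(\mathbb{R})$. The structural input driving the whole argument is that no $s \in M \cap \sigma(\mathbb{R})$ satisfies $s \supseteq t_\infty$: any such $s$ would make $r^* = \sup t_\infty$ definable in $M$, contradicting $r^* \notin M$. In particular $t_\infty \notin M$, and the elements of $M \cap \sigma(\mathbb{R})$ comparable to $t_\infty$ are exactly $\{\emptyset, t_0, t_1, \ldots\}$.

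The obstruction is encoded by
\[
D_n = \{p \in P \cap M : \exists t \in \mathrm{dom}(p), \ t \text{ is comparable to } t_\infty \text{ in } \sigma(\mathbb{R}) \text{ and } p(t) = n\}.
\]
Two things would need to be verified. First, each $D_n$ is dense in $P \cap M$: for $p \notin D_n$, I would choose $k$ large enough so that $t_k \notin \mathrm{dom}(p)$ and no $s \in \mathrm{dom}(p)$ extends $t_k$ (possible by the structural input combined with finiteness of $\mathrm{dom}(p)$); then the finite set of values blocked at $t_k$ by $p$ coincides with the values of $p$ along the chain through $t_\infty$, which omits $n$, so $p \cup \{(t_k, n)\}$ extends $p$ into $D_n$. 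Second, given any $q \in P$, I would find an $n$ with $D_n^\uparrow$ not dense below $q$: if $t_\infty \in \mathrm{dom}(q)$, take $n = q(t_\infty)$, since any extension of $q$ meeting $D_n$ would place value $n$ on two distinct points of the chain through $t_\infty$, violating injectivity on chains; if $t_\infty \notin \mathrm{dom}(q)$, choose $n$ outside the finite blocking set at $t_\infty$ in $q$ and pass to $r = q \cup \{(t_\infty, n)\} \leq q$, reducing to the first case.

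The main obstacle I foresee is the density verification, specifically the stabilization of the blocking set at $t_k$ for large $k$; this is entirely governed by the structural observation that $M$ contains no upper bound for $t_\infty$ in $\sigma(\mathbb{R})$. Once density is in hand, the dichotomy on whether $t_\infty \in \mathrm{dom}(q)$ is a short argument using only the injectivity-on-chains requirement in $P$, and from this the failure of semi-strong properness at $p_0 = \emptyset$ follows immediately.
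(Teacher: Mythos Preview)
Your proof is correct and follows the same overall strategy as the paper---build a node $t^*$ whose proper initial segments are all in $M$ but which has no extension in $M$, then define $D_n$ to say ``some predecessor of $t^*$ in $M$ gets color $n$,'' so that any semi-strongly generic condition would force a color collision with $t^*$ itself. The execution, however, is different and simpler. The paper constructs a node $t'$ of height $\delta = M\cap\omega_1$ by building an $\omega$-tower of countable elementary submodels $M_i$ together with nodes $t_i$ of height $M_i\cap\omega_1$, all bounded by a fixed real; the union $M$ and $t'$ then have the required property. You instead take an arbitrary countable $M\prec H(\lambda)$, pick any $r^*\in\mathbb{R}\setminus M$, and let $t_\infty$ be a rational sequence converging to $r^*$; this gives a node of height $\omega$ with the same key property, since any $s\in M$ extending $t_\infty$ would make $r^* = \sup(\text{first }\omega\text{ elements of }s)$ definable in $M$. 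Your route avoids the tower-of-models construction entirely. A minor additional virtue: your $D_n$ quantifies over \emph{all} nodes comparable to $t_\infty$ (equivalently, within $M$, over all $t_k$), whereas the paper's $D_n$ restricts to a fixed cofinal sequence $t'\restriction\delta_i$; your formulation makes the density verification slightly cleaner, since one need not worry about initial segments of $t'$ at heights off the chosen cofinal sequence.
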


\begin{proof}
Suppose for the sake of contradiction that $P$ is semi-strongly proper and let $\dot{g}$ be the $P$-name for the generic specializing function. Let $\lambda$ be a sufficiently large regular cardinal such that $P, \dot{g}$ and all relevant parameters belong to $H(\lambda)$.

First we will find a countable $M\prec H(\lambda)$ containing $\dot{g}, P$ with $\delta=_{def} M\cap \omega_1$ and $t'\in \sigma(\mathbb{R})$ of height $\delta$ such that for any $\alpha<\delta$, $t'\restriction \alpha \in M$. Notice that for any $t\in \sigma(\mathbb{R})$, $r>\sup(t)$ and $\alpha>\mathrm{ht}(t)$, there exists an extension $t'$ of $t$ such that $r> \sup(t')$ and $\alpha=\mathrm{ht}(t')$. The reason is that for any non-empty open interval $(a,b)$ of the reals with the usual ordering embeds any countable ordinal. Fix some $r\in \mathbb{R}$. Construct recursively $\langle M_i \prec H(\lambda): i\in \omega\rangle$ and $\langle t_i: i\in \omega\rangle$ such that $\dot{g}, P\in M_0$ and
\begin{enumerate}
\item $M_i$ is countable,
\item $M_i\subset M_{i+1}$, $t_i\leq_{\sigma(\mathbb{R})} t_{i+1}$ and $t_i\in M_{i+1}$,
\item $\mathrm{ht}(t_i)=\delta_i$ where $\delta_i=_{def} M_i\cap \omega_1$,
\item $\sup (t_i)<r$.
\end{enumerate}
To see how the construction is carried out, suppose we have defined $M_i, t_i$ satisfying the above. Let $M_{i+1}\supset M_i$ be any countable elementary submodel of $H(\lambda)$ containing $t_i$. Let $t_{i+1}$ be an extension of $t_i$ such that $\mathrm{ht}(t_{i+1})=\delta_{i+1}$ and $\sup(t_{i+1})<r$. Let $M=\bigcup_{i\in \omega} M_i$ and $t'=\bigcup_{i\in\omega} t_i$. It is easy to check that $M$ and $t'$ are as desired.

Fix an increasing sequence of countable ordinals $\langle\delta_i: i\in \omega\rangle$ converging to $\delta$. For each $n\in \omega$, consider $D_n=\{p\in P\cap M: \exists i\in \omega\ p\Vdash \dot{g}(t'\restriction \delta_i)=n\}$. Each $D_n$ is a dense subset of $P\cap M$, since given $p\in P\cap M$, if it is not already true that there exists $i\in \omega$ such that  $p(t'\restriction \delta_i)=n$, then we can always find an extension of $p$ in $P\cap M$ that satisfies the property as the domain of $p$ is finite. By the assumption that $P$ is semi-strongly proper, we can find $q\in P$ such that $q\Vdash D_n\cap \dot{G}\neq \emptyset$ for all $n\in \omega$. Let $q'\leq q$ and $m\in \omega$ such that $q'\Vdash \dot{g}(t')=m$. Since $q'\Vdash D_m\cap \dot{G}\neq \emptyset$, there exists $q''\leq q'$ and $i\in \omega$ such that $q''\Vdash \dot{g}(t'\restriction \delta_i)=m$. This contradicts the fact that $\dot{g}$ is forced to be a specializing function.

\end{proof}
There are other natural examples of BIP forcings that are not semi-strongly proper, like the Laver forcing (\cite{MR2023448}, Corollary 4.1.7).
\end{remark}

\begin{remark}\label{restriction}
It may be tempting to conjecture that for a Baire tree $T$, any countable support iteration of proper forcings that preserve the Baireness of $T$ preserves the Baireness of $T$. However, in general this is false. In fact, it is consistent that there exists a Baire tree $T$ and a countable support iteration of proper forcings $\langle P_i, \dot{Q}_j: i\leq \omega, j<\omega\rangle$ such that $\Vdash_{P_i} T$ is Baire for all $i<\omega$, but $\Vdash_{P_\omega} T$ is special. For one such example, see the proofs of Lemma 4.6 and Theorem 4.7 in Shelah \cite{MR1623206}, Chapter IX, Pages 455 - 457.

\end{remark}

In light of Remark \ref{restriction}, we need to consider a stronger property that implies Baire-preserving so that this property is also preserved under countable support iteration. This class should also include semi-strongly proper forcings. The class $\mathrm{BIP}$ (see Definition \ref{BIP}) turns out to be as desired.

\begin{definition}\label{operations}
Fix $M\prec H(\lambda)$ countable containing all relevant objects, including a countable support iteration of proper forcings $\langle P_i, \dot{Q}_j: i\leq \alpha,j<\alpha\rangle$. Let $C$ be a countable collection of dense subsets of $P_i\cap M$ for possibly several $i\in M\cap \alpha+1$. We say $C$ is \emph{closed under operations} with respect to $M$ (if $M$ is clear from the context we will just say $C$ is \emph{closed under operations}) if for any $D\in C$, $\gamma<\gamma'\in M\cap \alpha+1$ such that $D$ is a dense subset of $P_{\gamma'}\cap M$ and any $(p,\dot{q})\in M\cap (P_{\gamma}*P_{[\gamma, \gamma')})$, the set $A_{D,\gamma, (p,\dot{q})}=\{r\in P_\gamma\cap M: r\perp p \vee \exists \dot{q}' \ (r'=_{def}(r,\dot{q}')\in D\cap M, r'\leq (p,\dot{q}))\}$ is also in $C$. We let $C_\gamma$ to denote the collection of $D\in C$ such that $D$ is a dense subset of $P_\gamma\cap M$. 
\end{definition}

\begin{remark}
In order for the definition above to make sense, we need to verify $A_{D,\gamma, (p,\dot{q})}$ as defined is dense in $P_\gamma\cap M$. But this is clear. 
\end{remark}

\begin{claim}\label{DenseCarried}
Let $M$ and $\langle P_i, \dot{Q}_j: i\leq \alpha,j<\alpha\rangle$ be as in Definition \ref{operations}. For any $C$ closed under operations, and $\gamma\in M\cap \alpha+1$, suppose $G\subset P_\gamma$ is generic over $V$ such that $G$ meets all the dense sets in $C_\gamma$, then in $V[G]$, for any $D\in C_{\gamma+1}$, the set $(D)^G=_{def}\{(\dot{q})^G: \exists p\in G \ (p,\dot{q})\in D\}$ is dense in $M[G]\cap Q_\gamma$.
\end{claim}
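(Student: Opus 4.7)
The plan is a direct reduction to the closure-under-operations hypothesis applied to a suitably modified name. Fix $D \in C_{\gamma+1}$ and an arbitrary element $q \in M[G] \cap Q_\gamma$, written as $q = (\dot{q}_0)^G$ for some $P_\gamma$-name $\dot{q}_0 \in M$. The goal is to produce $(r, \dot{q}') \in D$ with $r \in G$ and $(\dot{q}')^G \le_{Q_\gamma} q$.

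The first step is a cosmetic move to make $\dot{q}_0$ admissible as the second coordinate of a condition in $P_{\gamma+1}$. Using the mixing lemma, define $\dot{q}_0^* \in M$ so that any condition forcing $\dot{q}_0 \in \dot{Q}_\gamma$ forces $\dot{q}_0^* = \dot{q}_0$, and any condition forcing the negation forces $\dot{q}_0^* = \check{1}_{\dot{Q}_\gamma}$. Then $1_{P_\gamma} \Vdash \dot{q}_0^* \in \dot{Q}_\gamma$, so $(1_{P_\gamma}, \dot{q}_0^*) \in M \cap P_{\gamma+1}$. Moreover, since $(\dot{q}_0)^G = q \in Q_\gamma$ lives in $V[G]$, the $V$-generic $G$ contains a condition deciding ``$\dot{q}_0 \in \dot{Q}_\gamma$'' on the positive side, so $(\dot{q}_0^*)^G = (\dot{q}_0)^G = q$.

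Now apply the closure-under-operations hypothesis to $D$, $\gamma$, and $(1_{P_\gamma}, \dot{q}_0^*)$: the resulting set $A = A_{D, \gamma, (1_{P_\gamma}, \dot{q}_0^*)}$ belongs to $C_\gamma$. Unpacking the definition (the disjunct $r \perp 1_{P_\gamma}$ is vacuous), $A$ consists of all $r \in P_\gamma \cap M$ for which there exists $\dot{q}'$ with $(r, \dot{q}') \in D \cap M$ and $(r, \dot{q}') \le_{P_{\gamma+1}} (1_{P_\gamma}, \dot{q}_0^*)$, the latter just saying $r \Vdash \dot{q}' \le_{\dot{Q}_\gamma} \dot{q}_0^*$. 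By assumption $G$ meets every dense set in $C_\gamma$, so pick some $r \in G \cap A$ with a witnessing name $\dot{q}'$.

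Reading off the conclusion: $(r, \dot{q}') \in D$ with $r \in G$ shows $(\dot{q}')^G \in (D)^G$, while $r \in G$ together with $r \Vdash \dot{q}' \le \dot{q}_0^*$ yields $(\dot{q}')^G \le_{Q_\gamma} (\dot{q}_0^*)^G = q$ in $V[G]$, completing the density argument. The one subtle point, and the only real obstacle, is the passage to $\dot{q}_0^*$: without this trick the pair $(1_{P_\gamma}, \dot{q}_0)$ is not literally an element of $P_{\gamma+1}$ and closure under operations cannot be invoked. Once the modified name is in place the rest is unwinding definitions.
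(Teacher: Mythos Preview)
Your proof is correct and follows essentially the same strategy as the paper: produce a pair $(p,\dot{q})\in M\cap P_{\gamma+1}$ with $(\dot{q})^G=q$, invoke closure under operations to get $A_{D,\gamma,(p,\dot{q})}\in C_\gamma$, pick $r\in G$ in this set, and read off the witness from the second disjunct. The only difference is in how the pair is obtained: the paper finds $p\in G\cap M$ with $p\Vdash \dot{t}\in\dot{Q}_\gamma$ by appealing to $M[G]\prec H(\lambda)^{V[G]}$, whereas you use the mixing lemma inside $M$ to manufacture $\dot{q}_0^*$ so that the trivial condition $1_{P_\gamma}$ works as the first coordinate. Your route has the small advantage of not invoking elementarity of $M[G]$, which the hypotheses of the claim (meeting only the dense sets in $C_\gamma$, not necessarily all dense sets in $M$) do not by themselves guarantee; in the paper's actual applications $C_\gamma$ does contain $D\cap M$ for every dense $D\in M$, so the issue does not arise there, but your argument is more self-contained as a proof of the claim as stated.
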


\begin{proof}
In $V[G]$, let $t\in M[G]\cap Q_\gamma$. Let $\dot{t}\in M$ be its name. Since $M[G]\prec H(\lambda)^{V[G]}$, we know that there exists $p\in G\cap M[G]\cap P_\gamma\subset M\cap P_\gamma$ such that $p\Vdash \dot{t}\in \dot{Q}_\gamma$. Consider $A_{D, \gamma, (p,\dot{t})}$, which is a set in $C_\gamma$ by the closure assumption. Since $G\cap A_{D, \gamma, (p,\dot{t})}\neq \emptyset$, we can pick $r$ in the intersection. By the definition of $A_{D, \gamma, (p,\dot{t})}$ and the fact that $r, p\in G$ which implies they are compatible, there exists $\dot{\tau}\in M$ such that $(r,\dot{\tau})\in D$ and is below $(p,\dot{t})$. But then since $r\in G$, $(\dot{\tau})^G \in (D)^G\subset M[G]\cap Q_\gamma$ and is stronger than $t$.
\end{proof}

Before we proceed with our iteration lemma, we need an extension lemma due to Shelah (\cite{MR1623206}, Theorem 2.7 and Remark 2.7A) about iteration of semi-strongly proper posets. Since the presentation in \cite{MR1623206} is in terms of $\aleph_1$-free limit and is relatively dense, we include a proof as a service to the reader.

\begin{lemma}[Shelah]\label{StronglyProperPreservation}
Let $\langle P_i, \dot{Q}_j: i\leq \alpha, j<\alpha\rangle$ be a countable support iteration of proper forcings and let $M\prec H(\lambda)$ be countable and contain all relevant objects including $P_\alpha$. Fix $\alpha_0\in M\cap \alpha$. Suppose $C$ is a countable collection of dense subsets of $P_\gamma\cap M$ for possibly several $\gamma\in M\cap (\alpha+1)$ closed under operations. 

Suppose for each $\gamma\in M\cap \alpha$ and $q\in P_\gamma$ that is semi-strongly generic for $M$ and $C_\gamma$, $q\Vdash_{P_\gamma} ``\dot{Q}_\gamma$ is semi-strongly proper for $M[\dot{G}_\gamma]$ and $(C_{\gamma+1})^{\dot{G}_\gamma}=_{def} \{(D)^{\dot{G}_\gamma}: D\in C_{\gamma+1}\}$''.

If $q\in P_{\alpha_0}$ is semi-strongly generic for $C_{\alpha_0}$ and $\dot{p}$ is a $P_{\alpha_0}$-name such that 

$$q\Vdash_{P_{\alpha_0}} \dot{p}\in P_{\alpha}\cap M, \dot{p}\restriction \alpha_0 \in \dot{G}_{\alpha_0},$$
then there exists $q'\in P_\alpha, q'\restriction \alpha_0 = q$ and $q'$ is semi-strongly generic for $C_{\alpha}$ and 

$$q'\Vdash_{P_\alpha} \dot{p}\in \dot{G}_\alpha.$$

\end{lemma}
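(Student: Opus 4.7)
The natural approach is induction on $\alpha$, with the base case $\alpha = \alpha_0$ handled by setting $q' = q$. For the successor step, write $\alpha = \beta+1$ with $\beta \in M$ and apply the inductive hypothesis to $\beta$ and the name $\dot{p}\restriction\beta$ to obtain $q'' \in P_\beta$ that is semi-strongly generic for $C_\beta$, with $q''\restriction\alpha_0 = q$ and $q'' \Vdash_{P_\beta} \dot{p}\restriction\beta \in \dot{G}_\beta$. In any generic $G_\beta \ni q''$, Claim \ref{DenseCarried} ensures that each $D \in C_{\beta+1}$ yields a dense subset $(D)^{G_\beta}$ of $M[G_\beta] \cap Q_\beta$, and the interpretation of $\dot{p}(\beta)$ lies in $M[G_\beta] \cap Q_\beta$. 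The standing hypothesis on $\dot{Q}_\beta$ then furnishes a semi-strongly generic refinement of this interpretation for the collection $(C_{\beta+1})^{G_\beta}$; pulling back produces a $P_\beta$-name $\dot{r}$ such that the condition $q' \in P_{\beta+1}$ defined by $q'\restriction\beta = q''$ and $q'(\beta) = \dot{r}$ is as required.

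For the limit step, assume $\alpha \in M$ is a limit. Fix an increasing cofinal enumeration $\langle \alpha_n : n \in \omega\rangle \subset M \cap \alpha$ starting at $\alpha_0$ and enumerate $C_\alpha = \{D_n : n \in \omega\}$. We recursively build a coherent sequence $\langle q_n \in P_{\alpha_n} : n \in \omega\rangle$ together with names $\langle \dot{p}_n : n \in \omega\rangle$ such that each $q_n$ is semi-strongly generic for $C_{\alpha_n}$, $q_n\restriction\alpha_m = q_m$ for $m < n$, each $\dot{p}_n$ is a $P_{\alpha_n}$-name for a condition in $P_\alpha \cap M$ refining $\dot{p}$, and $q_n \Vdash \dot{p}_n\restriction\alpha_n \in \dot{G}_{\alpha_n}$. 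At step $n+1$, applying the inductive hypothesis from $P_{\alpha_n}$ to $P_{\alpha_{n+1}}$ with the name $\dot{p}_n\restriction\alpha_{n+1}$ yields $q_{n+1}$ semi-strongly generic for $C_{\alpha_{n+1}}$. The closure of $C$ under operations places the set $A_{D_n, \alpha_{n+1}, (\dot{p}_n\restriction\alpha_{n+1}, \dot{p}_n\restriction[\alpha_{n+1},\alpha))}$ into $C_{\alpha_{n+1}}$; meeting this set in the $P_{\alpha_{n+1}}$-generic determined by $q_{n+1}$ supplies a refinement of $\dot{p}_n$ landing in $D_n$, which we take to be $\dot{p}_{n+1}$.

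Finally, let $q' \in P_\alpha$ be the condition with support $\bigcup_n \operatorname{supp}(q_n) \subset M \cap \alpha$ (countable, hence admissible in the countable-support iteration) and $q'\restriction\alpha_n = q_n$ for all $n$. One verifies: (i) $q'\restriction\alpha_0 = q$ by construction; (ii) $q' \Vdash \dot{p} \in \dot{G}_\alpha$, since $q' \leq q_n$ for every $n$, $\dot{p}_n \leq \dot{p}$, and $\operatorname{supp}(\dot{p}) \subset \bigcup_n \alpha_n$; (iii) $q'$ is semi-strongly generic for $C_\alpha$, as the construction at each step $n+1$ ensured that $D_n$ is hit. The main technical obstacle is the limit step: lacking a genuine $M$-genericity, the closure of $C$ under the operations $A_{D, \gamma, (p, \dot{q})}$ must be used to reduce the task of hitting a dense set $D \in C_\alpha$ to that of hitting a dense set in $C_{\alpha_{n+1}}$ at an approximating stage, which is precisely why the closure property of Definition \ref{operations} is built into the hypothesis.
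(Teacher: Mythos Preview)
Your proposal is correct and follows essentially the same inductive scheme as the paper: the successor step is identical, and in the limit step both you and the paper build coherent sequences $\langle q_n\rangle$, $\langle \dot{p}_n\rangle$ using the closure of $C$ under operations to hit each $D_n$. The only cosmetic difference is the order of operations at stage $n+1$: the paper first uses the semi-strong genericity of $q_i$ for $C_{\alpha_i}$ (via $A_{D_i,\alpha_i,p_i}\in C_{\alpha_i}$) to produce $\dot{p}_{i+1}$ and then invokes the induction hypothesis to extend to $q_{i+1}$, whereas you first extend to $q_{n+1}$ and then use its genericity for $C_{\alpha_{n+1}}$ to find $\dot{p}_{n+1}$. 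One small point of hygiene: when you write $A_{D_n,\alpha_{n+1},(\dot{p}_n\restriction\alpha_{n+1},\dot{p}_n\restriction[\alpha_{n+1},\alpha))}$, the pair in the subscript must be a fixed element of $M$, not a name; you should say that in any $G_{\alpha_{n+1}}$ containing $q_{n+1}$ the interpretation $p_n\in P_\alpha\cap M$ determines the relevant set $A_{D_n,\alpha_{n+1},p_n}\in C_{\alpha_{n+1}}$, exactly as the paper does at level $\alpha_i$.
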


\begin{proof}

We proceed by induction on $\alpha$. If $\alpha=\beta+1$, fix such $M\prec H(\lambda)$ countable containing the iteration and $\alpha_0\in M\cap \alpha$. Note that $\beta\in M$. Let $q\in P_{\alpha_0}$, and $\dot{p}\in V^{P_{\alpha_0}}$ be as given.
Apply the induction hypothesis, we get $q'\leq q, q'\in P_\beta, q'\restriction \alpha_0 =q$ such that $q'$ is semi-strongly generic for $C_{\beta}$ and $q'\Vdash_{P_\beta} ``\dot{p}\restriction \beta\in \dot{G}_\beta$ and $\dot{p}\in P_{\alpha}\cap M$''. By the hypothesis and Claim \ref{DenseCarried}, we have $q'\Vdash_{P_\beta} ``\dot{Q}_\beta$ is semi-strongly proper for $M[\dot{G}_\beta]$ and $(C_{\beta+1})^{\dot{G}_\beta}$''.
Let $G_\beta\subset P_\beta$ be generic over $V$ containing $q'$. Then in $V[G_\beta]$, $(\dot{p})^{G_\beta}=p\in P_\alpha\cap M$. Since in $V[G_\beta]$, $Q_\beta$ is semi-strongly proper for $M[G_\beta]$ and $(C_{\beta+1})^{G_\beta}$, there exists $t\leq_{Q_\beta} (p(\beta))^{G_\beta}$ that is semi-strongly generic for $(C_{\beta+1})^{G_\beta}$. Let $\dot{t}$ be a $P_\beta$-name for $t$ such that $q'$ forces it satisfies all the properties above, which exists by the maximality principle of forcing. Hence $(q',\dot{t})$ is the desired extension. Indeed, $(q',\dot{t})\Vdash_{P_\alpha} ``\dot{p}\in \dot{G}_\alpha$'', $(q',\dot{t})\restriction \alpha_0 = q'\restriction \alpha_0 = q$ and $(q',\dot{t})$ is semi-strongly generic for $C_{\alpha}$ (which is easily implied by the fact that $q'$ is semi-strongly generic for $C_\beta$ and $q'\Vdash ``\dot{t}$ is semi-strongly generic for $(C_{\beta+1})^{\dot{G}_\beta}$'').

When $\alpha$ is a limit, list $\{ D_n: n\in \omega\}$ in $C_\alpha$ and fix $\alpha_0\in M\cap \alpha, q\in P_{\alpha_0}$ and $\dot{p}$ as in the hypothesis. 
Fix an increasing $\langle \alpha_i\in M\cap \alpha: i\in \omega\rangle$ cofinal in $\sup (M\cap \alpha)$.

We build the following sequences: 
$\langle q_i: i<\omega\rangle$, $\langle \dot{p}_i: i<\omega\rangle$ such that 

\begin{itemize}
\item $\dot{p}_0 = \dot{p}, q_0 = q$,
\item $q_i\in P_{\alpha_i}$ is semi-strongly generic for $C_{\alpha_i}$,
\item $\dot{p}_i$ is a $P_{\alpha_i}$-name,
\item $q_{i+1}\Vdash_{P_{\alpha_{i+1}}} ``\dot{p}_{i+1}\in P_\alpha\cap M, \dot{p}_{i+1}\restriction \alpha_{i+1} \in \dot{G}_{\alpha_{i+1}}, \dot{p}_{i+1}\leq \dot{p}_{i}, \dot{p}_{i+1} \in D_{i}$'',
\item $q_{i+1}\restriction \alpha_i = q_i$.
\end{itemize}

If the construction is successful, then by the standard argument as in the properness preservation theorem (see for example \cite{MR2768684}, Lemma 2.8), we will be done.

Now we demonstrate for given $q_i, \dot{p}_i$, how to find $q_{i+1}, \dot{p}_{i+1}$ satisfying the requirements. Let $G_{\alpha_i}\subset P_{\alpha_i}$ be generic over $V$ containing $q_i$. In $V[G_{\alpha_i}]$, we have $p_i = (\dot{p}_i)^{G_{\alpha_i}} \in P_\alpha\cap M$ and $p_i\restriction \alpha_i \in G_{\alpha_i}$. Consider the set $A_{D_i, \alpha_i, p_i}\in C_{\alpha_i}$. By the semi-strong genericity of $q_i$ with respect to $C_{\alpha_i}$, $G_{\alpha_i}\cap A_{D_i, \alpha_i, p_i}\neq \emptyset$. Fix some $r\in G_{\alpha_i}\cap A_{D_i, \alpha_i, p_i}$. Then there exists $(r,\dot{q})\in D_i$ with $(r,\dot{q})\leq p_i$. 

By the maximality principle of forcing we can find a $P_{\alpha_i}$-name $\dot{p}_{i+1}$ such that $q_i\Vdash_{P_{\alpha_i}} ``\dot{p}_{i+1}\leq \dot{p}_i, \dot{p}_{i+1}\in D_i\subset M\cap P_\alpha$ and $ \dot{p}_{i+1}\restriction \alpha_i\in \dot{G}_{\alpha_i}$''.
Apply the induction hypothesis, we can get $q_{i+1} \in P_{\alpha_{i+1}}$ with $q_{i+1}\restriction \alpha_i =q_i$, $q_{i+1}$ is semi-strongly generic for $C_{\alpha_{i+1}}$ and $q_{i+1}\Vdash_{P_{\alpha_{i+1}}} ``\dot{p}_{i+1}\restriction \alpha_{i+1}\in \dot{G}_{\alpha_{i+1}}$''.

\end{proof}

\begin{cor}\label{DirectApplication}
Let $\langle P_i, \dot{Q}_j: i\leq \alpha, j<\alpha\rangle$ be a countable support iteration of proper forcing. Let $M\prec H(\lambda)$ be countable and contain $P_\alpha$. Let $C$ be a countable collection of dense subsets of $P_\gamma\cap M$ for possibly several $\gamma\in M\cap (\alpha+1)$ closed under operations. 

Suppose for each $\gamma\in M\cap \alpha$ and $q\in P_\gamma$ that is semi-strongly generic for $M$ and $C_\gamma$, $q\Vdash_{P_\gamma} \dot{Q}_\gamma$ is semi-strongly proper for $M[\dot{G}_\gamma]$ and $(C_{\gamma+1})^{\dot{G}_\gamma}$.

Then for each $p\in M\cap P_\alpha$, there exists $q\leq p$ that is semi-strongly generic for $C_\alpha$.
\end{cor}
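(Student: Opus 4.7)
The plan is to derive this corollary as a direct application of Lemma \ref{StronglyProperPreservation} with the smallest possible starting ordinal, namely $\alpha_0 = 0$. The extension lemma is already tailored for precisely this kind of task --- it does all the inductive heavy lifting across successor and limit stages --- so the corollary should reduce to supplying a trivial initial condition and reading off the conclusion.

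Concretely, first dispose of the degenerate case $\alpha = 0$, where $P_\alpha$ is trivial and there is nothing to prove. So assume $\alpha > 0$; then $0 \in M \cap \alpha$ since $0$ is definable in $M$. Set $\alpha_0 = 0$ and let $q = \mathbf{1}_{P_0}$. Since $P_0 \cap M = \{\mathbf{1}_{P_0}\}$, the only dense subset of $P_0 \cap M$ is itself, so $q$ is trivially semi-strongly generic for $M$ and $C_0$: it forces the unique element of any such $D \in C_0$ into the generic filter. Given any $p \in M \cap P_\alpha$, take $\dot{p}$ to be the canonical check-name $\check{p}$. Then $q \Vdash_{P_0} \dot{p} \in P_\alpha \cap M$ holds because $p \in M \cap P_\alpha$, and $q \Vdash_{P_0} \dot{p} \restriction 0 \in \dot{G}_0$ is vacuous.

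Now invoke Lemma \ref{StronglyProperPreservation} with these choices. Its hypotheses on the iteration and on the dense collection $C$ are exactly those assumed in the corollary. The conclusion produces $q' \in P_\alpha$ with $q' \restriction 0 = q$, such that $q'$ is semi-strongly generic for $C_\alpha$, and $q' \Vdash_{P_\alpha} \dot{p} \in \dot{G}_\alpha$. The last condition unpacks to $q' \leq_{P_\alpha} p$, yielding the desired extension.

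The main obstacle --- in the sense of where all the combinatorial work actually resides --- sits inside Lemma \ref{StronglyProperPreservation} itself: the successor step, where the assumed semi-strong properness of $\dot{Q}_\gamma$ for $M[\dot{G}_\gamma]$ and $(C_{\gamma+1})^{\dot{G}_\gamma}$ gets activated via Claim \ref{DenseCarried}, and the limit step, where an $\omega$-sequence of semi-strongly generic conditions is threaded together along a cofinal sequence in $\sup(M \cap \alpha)$, using closure of $C$ under the operations $A_{D,\gamma,(p,\dot{q})}$ to reduce genericity questions about $D \in C_\alpha$ to genericity questions at earlier stages. For the corollary itself, no new difficulty arises beyond verifying that the trivial forcing $P_0$ meets the hypotheses of the lemma, which it does for entirely trivial reasons.
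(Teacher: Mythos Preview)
Your proposal is correct and follows exactly the paper's approach: the paper's proof is the single line ``Apply Lemma \ref{StronglyProperPreservation} with $\alpha_0=0$,'' and you have simply spelled out the trivial verifications needed to invoke that lemma at $\alpha_0=0$.
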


\begin{proof}
Apply Lemma \ref{StronglyProperPreservation} with $\alpha_0=0$.
\end{proof}

Our main idea is that in order to prove properness of a poset in a countably distributive extension, it suffices to prove semi-strong properness in the ground model with respect to the relevant collection of dense sets.

The following iteration lemma is key to the proof of the main theorem.

\begin{lemma}[Key Lemma]\label{KeyLemma}
Let $T$ be a Baire tree and $\langle P_i, \dot{Q}_j: i\leq \alpha, j<\alpha\rangle$ be a countable support iteration of proper forcings such that for each $i<\alpha$, $\Vdash_{T\times P_i} \dot{Q}_i$ is proper. Then $\Vdash_{T} P_\alpha$ is proper.
\end{lemma}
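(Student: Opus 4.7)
The approach is to reduce directly to Shelah's theorem on countable support iteration of proper forcings, applied inside $V[H]$ for a $T$-generic filter $H$.

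The crucial preliminary observation is that since $T$ is Baire, the iteration $\langle P_i, \dot{Q}_j : i \le \alpha, j < \alpha\rangle$ is unchanged in $V[H]$: by induction on $i \le \alpha$, $P_i^{V[H]} = P_i^V$ as forcing notions with identical ordering and the same $P_j$-names $\dot{Q}_j$ at each stage. The only non-trivial case is a limit $i$, where $P_i$ consists of countable-support functions; since $T$ adds no new countable sequences of ordinals, no new supports (and hence no new iteration conditions) arise in $V[H]$. In particular, at each $i$ the set of nice $P_i$-names and the maximal antichains of $P_i$ are literally the same in $V$ and $V[H]$.

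Next, the hypothesis $\Vdash_{T \times P_j} \dot{Q}_j$ is proper translates, via the product lemma, to the assertion that in $V[H]$, $\dot{Q}_j$ is forced proper by $P_j$: if $G_j$ is $V[H]$-generic for $P_j$, then $H \times G_j$ is $V$-generic for $T \times P_j$ by mutual genericity, so by hypothesis $\dot{Q}_j^{G_j}$ is proper in $V[H][G_j]$. Consequently, in $V[H]$ we have a countable-support iteration whose every iterand is forced to be proper, and Shelah's preservation theorem for countable support iterations of proper forcings, applied inside $V[H]$, yields that $P_\alpha$ is proper in $V[H]$. Since $H$ was an arbitrary $T$-generic filter, $\Vdash_T P_\alpha$ is proper.

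The main subtlety is the preservation of the iteration structure under the Baire extension, which rests essentially on the fact that $T$ preserves countable sequences of ordinals; once this is established, classical iteration theory does the work. A more self-contained alternative, better suited to the framework developed earlier in this section, would be to prove the lemma by induction on $\alpha$ while mimicking the fusion construction in the proof of Lemma \ref{StronglyProperPreservation}: in $V$, given a countable $M \prec H(\lambda)$ containing $T, P_\alpha$, and $p \in P_\alpha \cap M$, construct a pair $(t^*, \dot{q}^*) \in T * \dot{P}_\alpha$ such that $t^* \Vdash \dot{q}^*$ is $(M[\dot{H}], P_\alpha)$-generic by an inductive construction that meets, at successor stages of the iteration, the relevant dense subsets of $P_{\alpha_{n+1}}$ using the proper-ness hypothesis on $\dot{Q}_{\alpha_n}$ over $T \times P_{\alpha_n}$, and locates the final master condition $t^* \in T$ at the limit via the countable intersection of the induced dense open subsets of $T$ (guaranteed dense by Baire-ness).
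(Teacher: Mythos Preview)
Your primary approach contains a genuine gap. The assertion that the iteration is ``unchanged in $V[H]$'' --- i.e., that $P_i^{V[H]} = P_i^V$ --- is false. The paper makes this explicit already in the successor case: $P_{\beta+1}^V$ is only a \emph{dense subset} of the two-step iteration $P_\beta * \dot{Q}_\beta$ as computed in $V[H]$, precisely because $V[H]$ contains new $P_\beta$-names $\dot{q}$. Your justification (``no new countable sequences of ordinals, hence no new supports, hence no new iteration conditions'') conflates supports with conditions: the coordinates $p(j)$ of an iteration condition are \emph{names}, and a Baire tree --- which adds a new $\omega_1$-branch --- certainly adds new subsets of each $P_j$ and therefore new $P_j$-names. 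So you cannot simply invoke Shelah's preservation theorem in $V[H]$, since $P_\alpha^V$ is not a countable-support iteration computed there.

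One could try to repair this by showing inductively that $P_i^V$ is \emph{dense} in the $V[H]$-recomputed iteration $P_i^*$ and then applying Shelah's theorem to $P_\alpha^*$; the successor step is exactly the paper's argument, but the limit step would require a coherent fusion replacing each $V[H]$-name $p(j)$ by a $V$-name along the entire support --- essentially the content of a preservation theorem, not a consequence of $\omega$-distributivity alone. The paper sidesteps this comparison altogether. For limit $\alpha$ it works in $V[H]$ only long enough to choose a countable $M\prec H(\lambda)^{V[H]}$ with $M'=M\cap H(\lambda)^V\prec H(\lambda)^V$, then observes that the collections $\bar{D}^\gamma\restriction M=\{D\cap M: D\in M\text{ dense in }P_\gamma\}$ lie in $V$ by distributivity and are closed under the operations of Definition~\ref{operations} (Claim~\ref{three}), verifies the iterand hypothesis of Corollary~\ref{DirectApplication} (Claim~\ref{two}), and finally applies that corollary \emph{in $V$} to obtain a condition semi-strongly generic for $\bar{D}^\alpha\restriction M$, which is $(M,P_\alpha)$-generic back in $V[H]$. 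Your final paragraph gestures in this direction; Lemma~\ref{StronglyProperPreservation} together with Claims~\ref{three} and~\ref{two} is exactly the machinery needed to make that sketch precise.
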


\begin{remark}
Notice in Lemma \ref{KeyLemma} there is some abuse of notation, in that $\dot{Q}_i$ is actually a $P_i$-name, but it can be canonically identified as a $(T\times P_i)$-name, say $\dot{Q}'_i$. So here we really mean $\Vdash_{T\times P_i} \dot{Q}'_i$ is proper. But in general, there is only one way of interpretation based on the context, so we confuse $\dot{Q}_i$ with $\dot{Q}'_i$. We extend this practice to other similar situations.
\end{remark}

\begin{proof}
We proceed by induction on $\alpha$. If $\alpha=\beta+1$, then by the hypothesis, $\Vdash_{T\times P_\beta} \dot{Q}_\beta$ is proper. Let $H\subset T$ be generic over $V$. We need to show that $V[H] \models P_\alpha$ is proper. We will be done once we realize that in $V[H]$, $P_{\alpha}$ is a dense subset of $P_\beta * \dot{Q}_\beta$, since by the hypothesis, $V[H] \models P_\beta * \dot{Q}_\beta$ is proper.
\textbf{The difference between these two sets is }$P_\alpha$ is the two-step iteration defined in $V$, so $(p,\dot{q})\in P_\alpha\rightarrow (p,\dot{q})\in V$ while $P_\beta * \dot{Q}_\beta$ is the iteration defined in $V[H]$ which may contain $(p,\dot{q})$ where $\dot{q}\not\in V$. Given $(p,\dot{q})\in P_\beta * \dot{Q}_\beta$, we have $p\Vdash \dot{q}\in \dot{Q}_\beta$. Since $\dot{Q}_\beta$ is a $P_\beta$-name living in $V$, we know $p\Vdash \exists \dot{t}\in V \ \dot{t}=\dot{q}$. Let $p'\leq p$ and $\dot{t}\in V$ be such that $p'\Vdash \dot{t}=\dot{q}$. Then $(p', \dot{t})\leq (p,\dot{q})$ and $(p', \dot{t})\in P_{\alpha}$.

When $\alpha$ is a limit ordinal, let $H\subset T$ be generic over $V$. In $V[H]$, let $\lambda$ be a large enough regular cardinal, and $M\prec H(\lambda)$ be countable containing relevant objects including $P_\alpha$ such that $M\cap H(\lambda)^V\prec H(\lambda)^V$.
Note since $V[H]$ is a countably distributive extension of $V$, we have that $M'=_{def}M \cap H(\lambda)^V\in V$. 

For each $\gamma\in M\cap (\alpha+1)$, enumerate the dense subsets of $P_\gamma$ in $M$ as $\bar{D}^\gamma =\{D^\gamma_n: n\in \omega\}$. Let $\bar{D}^{\gamma}\restriction M = \{D^\gamma_n\cap M: n\in \omega\}$. As each $D^\gamma_n\cap M\subset P_\gamma\cap M =P_\gamma \cap M'\subset V$, by countable distributivity, we have $\bar{D}^\gamma\restriction M\in V$.

\begin{claim}\label{three}
$\bigcup_{\gamma\in M\cap (\alpha+1)} \bar{D}^\gamma\restriction M \in V$ is closed under operations.
\end{claim}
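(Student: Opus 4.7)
The plan is as follows. By construction, every element of $C := \bigcup_{\gamma\in M\cap (\alpha+1)} \bar{D}^\gamma\restriction M$ has the form $D^{\gamma'}_n\cap M$ for some $n\in\omega$ and $\gamma'\in M\cap (\alpha+1)$, where $D^{\gamma'}_n\in M$ is dense in $P_{\gamma'}$ (in $V$). Fix such a $D=D^{\gamma'}_n\cap M$ together with $\gamma<\gamma'$ in $M\cap (\alpha+1)$ and $(p,\dot{q})\in M\cap (P_\gamma\ast P_{[\gamma,\gamma')})$. I want to exhibit $A_{D,\gamma,(p,\dot{q})}$ as $D^\gamma_m\cap M$ for some $m\in\omega$.

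The key trick is to lift the definition of $A_{D,\gamma,(p,\dot{q})}$ to a global version living inside $V$. Working in $V$, define
\[
A^* = \{r\in P_\gamma : r\perp p \;\vee\; \exists \dot{q}'\,((r,\dot{q}')\in D^{\gamma'}_n \wedge (r,\dot{q}')\leq (p,\dot{q}))\}.
\]
Exactly the density argument alluded to in the remark following Definition~\ref{operations} shows that $A^*$ is dense in $P_\gamma$: given $r_0\in P_\gamma$ with $r_0\not\perp p$, pick $s\leq r_0, p$ in $P_\gamma$ and apply the density of $D^{\gamma'}_n$ in $P_{\gamma'}$ to $(s,\dot{q})$ to obtain $(r,\dot{q}')\in D^{\gamma'}_n$ with $r\leq s\leq r_0$ and $(r,\dot{q}')\leq (p,\dot{q})$, placing $r\in A^*$. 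Since $D^{\gamma'}_n, \gamma, (p,\dot{q})\in M$, elementarity of $M\prec H(\lambda)$ gives $A^*\in M$, so $A^*=D^\gamma_m$ for some $m\in\omega$.

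Finally, I would verify $A^*\cap M = A_{D,\gamma,(p,\dot{q})}$. The inclusion $\supseteq$ is immediate: any witness $\dot{q}'\in M$ with $(r,\dot{q}')\in D\cap M$ automatically satisfies $(r,\dot{q}')\in D^{\gamma'}_n$, so $r\in A^*$. For $\subseteq$, suppose $r\in A^*\cap M$ and $r\not\perp p$; then $H(\lambda)$ sees the existential statement ``$\exists \dot{q}'\,((r,\dot{q}')\in D^{\gamma'}_n\wedge(r,\dot{q}')\leq (p,\dot{q}))$'' with all parameters in $M$, so by elementarity a witness $\dot{q}'\in M$ exists, giving $(r,\dot{q}')\in D^{\gamma'}_n\cap M = D$ and hence $r\in A_{D,\gamma,(p,\dot{q})}$. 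Thus $A_{D,\gamma,(p,\dot{q})} = D^\gamma_m\cap M\in C$.

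I do not expect a genuine obstacle here; the content is purely a bookkeeping exercise in elementarity and absoluteness, with the only subtle point being the explicit identification of $A_{D,\gamma,(p,\dot{q})}$ as the $M$-trace of a set definable from parameters already inside $M$.
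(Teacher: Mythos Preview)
Your proposal is correct and follows essentially the same approach as the paper: both define the global set $A^*$ (called $A'$ in the paper) using the dense set $D^{\gamma'}_n\in M$ in place of its trace $D^{\gamma'}_n\cap M$, observe it is dense in $P_\gamma$ and lies in $M$ since all defining parameters do, and then identify $A_{D,\gamma,(p,\dot{q})}$ with $A^*\cap M$ by elementarity. You spell out the density argument and the two inclusions explicitly, while the paper simply declares these steps immediate.
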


\begin{proof}[Proof of the Claim]
For any $\gamma'\leq \alpha$, $\gamma<\gamma' \in M$, $p\in P_{\gamma'}\cap M$ and any dense $D\subset P_{\gamma'}$ with $D\in M$, we need to show that $A_{D\cap M, \gamma, p}\in \bar{D}^\gamma\restriction M$. But this is immediate from the fact that $A'=\{r\in P_{\gamma}: r\perp p\restriction \gamma \vee \exists \dot{q}' \ r'=_{def}(r,\dot{q}')\in D, r'\leq p\}$ is a dense subset of $P_{\gamma}$ living in $M$ and $A_{D\cap M, \gamma, p}=A'\cap M \in \bar{D}^\gamma\restriction M$ by elementarity.
\end{proof}

\begin{claim}\label{two}
In $V$, for each $\gamma\in M\cap \alpha$ and any $q\in P_{\gamma}$ that is semi-strongly generic for $\bar{D}^\gamma\restriction M$, $q\Vdash_{P_{\gamma}} \dot{Q}_{\gamma}$ is semi-strongly proper for $M'[\dot{G}_\gamma]$ and $(\bar{D}^{\gamma+1}\restriction M)^{\dot{G}_\gamma}$.
\end{claim}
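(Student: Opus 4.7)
The plan is to deduce the desired semi-strong properness in $V[G_\gamma]$ by passing to the larger extension $V[H][G_\gamma] = V[G_\gamma][H']$ (where $H'$ is $T$-generic over $V[G_\gamma]$), invoking the Key Lemma hypothesis that $\Vdash_{T\times P_\gamma} \dot Q_\gamma$ is proper there, and transferring the witnessing condition back via absoluteness of the forcing relation.

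First I would observe that the semi-strong genericity of $q$ for $\bar D^\gamma\restriction M$ in $V$ is exactly $(M, P_\gamma)$-genericity of $q$ in the properness sense for $V[H]$, since $\bar D^\gamma$ enumerates all dense subsets of $P_\gamma$ lying in $M$. The key consequence is the containment $M[G_\gamma]\cap V\subseteq M$ for any $V$-generic $G_\gamma\ni q$: for $x = (\dot x)^{G_\gamma}\in M[G_\gamma]\cap V$ with $\dot x\in M$, the set $F := \{p\in P_\gamma : \exists y\in V,\ p\Vdash_{P_\gamma} \dot x = \check y\}$ belongs to $M$ (it is first-order definable from $\dot x, P_\gamma$, and the ground model $V$, all recognizable by $M$) and is dense below any condition forcing $\dot x\in V$; by $(M,P_\gamma)$-genericity we pick $p\in G_\gamma\cap F\cap M$, and since the witness $y$ is uniquely determined by $p$, elementarity of $M$ forces it to lie in $M\cap V = M'$, so $x = y\in M$.

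Next, $T$ remains Baire over $V[G_\gamma]$ by the induction hypothesis of the Key Lemma (giving $\Vdash_T P_\gamma$ proper, hence countably capturing) combined with Lemma \ref{capturing}. Fix a $T$-generic $H'$ over $V[G_\gamma]$; in $V[H][G_\gamma]$ the Key Lemma hypothesis yields that $Q_\gamma$ is proper, and $M[G_\gamma]\prec H(\lambda)^{V[H][G_\gamma]}$. For any $p\in M'[G_\gamma]\cap Q_\gamma \subseteq M[G_\gamma]\cap Q_\gamma$, properness supplies $r\leq p$ that is $(M[G_\gamma], Q_\gamma)$-generic. The set $(D^{\gamma+1}_n)^{G_\gamma}$ is dense in $Q_\gamma$ in $V[H][G_\gamma]$ (by the standard two-step iteration density argument) and lies in $M[G_\gamma]$, so $r$ forces $(D^{\gamma+1}_n)^{G_\gamma}\cap \dot G\cap M[G_\gamma]\neq\emptyset$. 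For any $e$ in this intersection, elementarity of $M[G_\gamma]$ furnishes a witness pair $(p',\dot q')\in D^{\gamma+1}_n\cap M[G_\gamma]$ with $p'\in G_\gamma$ and $(\dot q')^{G_\gamma}=e$; since $D^{\gamma+1}_n\subseteq V$, the pair lies in $M[G_\gamma]\cap V\subseteq M$ by the first paragraph, so $e\in (D^{\gamma+1}_n\cap M)^{G_\gamma}$. Because $r\in Q_\gamma\subseteq V[G_\gamma]$ and each $(D^{\gamma+1}_n\cap M)^{G_\gamma}$ is in $V[G_\gamma]$, absoluteness of the forcing relation between $V[G_\gamma]$ and $V[G_\gamma][H']$ transfers the witnessing genericity back, completing the proof.

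The main obstacle is the containment $M[G_\gamma]\cap V\subseteq M$ in the first paragraph: this is precisely where the specific semi-strong genericity of $q$ (as opposed to mere membership in $P_\gamma$) is essential, for without it the elementarity argument in the third paragraph would only land witness pairs in $M[G_\gamma]$, which need not lie in the target set $D^{\gamma+1}_n\cap M$ that the semi-strong properness is required to meet.
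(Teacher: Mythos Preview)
Your approach is essentially the paper's: pass to $V[H\times G_\gamma]$, use the hypothesis that $Q_\gamma$ is proper there to get an $(M[G_\gamma], Q_\gamma)$-generic $r$, argue that $r$ forces the generic to meet each $(D^{\gamma+1}_n\cap M)^{G_\gamma}$ (using that the witness pair found by elementarity lands in $M[G_\gamma]\cap V\subseteq M$), and transfer back to $V[G_\gamma]$ by absoluteness of the forcing relation. The identification of the key obstacle is exactly right.

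Two small wrinkles in your presentation. First, you take $G_\gamma$ generic over $V$ and then a fresh $T$-generic $H'$ over $V[G_\gamma]$, yet immediately invoke $M[G_\gamma]\prec H(\lambda)^{V[H][G_\gamma]}$. Since $M$ was chosen inside $V[H]$ for the \emph{fixed} $H$, you need $H'=H$, which forces $G_\gamma$ to be generic over $V[H]$, not merely over $V$. The paper simply takes $G_\gamma$ generic over $V[H]$ from the outset; this still verifies the $V$-forcing statement, since any $r\leq q$ in $V$ lies in some $V[H]$-generic filter. Second, your direct argument for $M[G_\gamma]\cap V\subseteq M$ is a bit rough: the set $F$ you define is open but not dense, so $(M,P_\gamma)$-genericity does not apply to it directly. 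The paper instead invokes the standard consequence $M[G_\gamma]\cap V[H]=M$ of $(M,P_\gamma)$-genericity (computed in $V[H]$), which immediately gives what you need since $D^{\gamma+1}_n\subseteq P_{\gamma+1}\subseteq V\subseteq V[H]$.
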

\begin{proof}[Proof of the claim]
Fix $\dot{r}\in M$ a $P_{\gamma}$-name for a condition in $\dot{Q}_\gamma$. 
 Let $G_\gamma\subset P_{\gamma}$ be generic over $V[H]$ containing $q$, then $V[H\times G_\gamma]\models Q_\gamma=(\dot{Q}_\gamma)^{G_\gamma}$ is proper with respect to $M[G_\gamma]$. Let $r\leq (\dot{r})^{G_\gamma}$ be a $(M[G_\gamma],  Q_\gamma)$-generic condition. We claim that in $V[G_\gamma]$, $r$ is semi-strongly generic for $(\bar{D}^{j+1}\restriction M)^{G_\gamma}$. Then the claim follows immediately.

The fact that, in $V$, $q$ is semi-strongly generic for $\bar{D}^\gamma\restriction M$ implies that in $V[H]$, $q$ is $(M, P_{\gamma})$-generic. Therefore, for each $D^{\gamma+1}_n\in \bar{D}^{\gamma+1}$,  $(D^{\gamma+1}_n)^{G_\gamma}$ is a dense subset of $Q_\gamma=(\dot{Q}_\gamma)^{G_\gamma}$ living in $M[G_\gamma]$. Fix $n\in \omega$.
Since in $V[H][G_\gamma]$, $r$ is $(M[G_\gamma], Q_\gamma)$-generic, we know $r\Vdash_{Q_\gamma} ``(D^{\gamma+1}_n)^{G_\gamma}\cap M[G_\gamma]\cap \dot{W}\neq \emptyset$'', where $\dot{W}$ is the canonical name for the generic filter. To see that this implies $r\Vdash_{Q_\gamma} ``(D^{\gamma+1}_n\cap M)^{G_\gamma}\cap \dot{W}\neq \emptyset$'', let $W\subset Q_\gamma$ be generic over $V[H][G_\gamma]$ containing $r$. In $V[H][G_\gamma][W]$, there exists $\dot{t}\in M$ such that $(\dot{t})^{G_\gamma}\in (D^{\gamma+1}_n)^{G_\gamma}$ and $(\dot{t})^{G_\gamma} \in W$. Since both $(\dot{t})^{G_\gamma}$ and $(D^{\gamma+1}_n)^{G_\gamma}$ are in $M[G_\gamma]\prec (H(\lambda))^{V[H][G_\gamma]}$, $M[G_\gamma]\models \exists p\in G_\gamma \exists \dot{l} \ (p,\dot{l})\in D_n^{\gamma+1}$ and $(\dot{t})^{G_\gamma} = (\dot{l})^{G_\gamma}$. Find $(p,\dot{l})\in D^{\gamma+1}_n\cap M[G_\gamma]$ witnessing the statement above. Since $q$ is $(M, P_\gamma)$-generic and $G_\gamma$ contains $q$, we know $M[G_\gamma]\cap V[H] = M$. So $(p,\dot{l})\in M$. Hence in $V[H][G_\gamma][W]$, 
we have found $(\dot{l})^{G_\gamma}\in (D^{\gamma+1}_n \cap M)^{G_\gamma}\cap W$.

We claim that $V[G_\gamma]$ models the same statement, namely $$r\Vdash_{Q_\gamma} (D^{\gamma+1}_n\cap M)^{G_\gamma}\cap \dot{W}\neq \emptyset.$$ Suppose not for the sake of contradiction. In $V[G_\gamma]$ we can extend $r$ to $r'$ to force the negation of the statement. Let $W\subset Q_\gamma$ containing $r'$ be generic over $V[H][G_\gamma]$. Then $V[G_\gamma * W] \models (D^{\gamma+1}_n\cap M)^{G_\gamma}\cap W = \emptyset$ but $V[H][G_\gamma * W]\models (D^{\gamma+1}_n\cap M)^{G_\gamma}\cap W \neq \emptyset$. By the product lemma, $V[G_\gamma * W]$ is a submodel of $V[H][G_\gamma * W]$ and the statement is absolute between these two models. We thus get a contradiction.

\end{proof}

Work in $V$. By Claim \ref{three}, Claim \ref{two} and Corollary \ref{DirectApplication}, we can conclude that $P_\alpha$ is semi-strongly proper for $M'$ and $\bar{D}^\alpha\restriction M$. 
Using the same argument as in Lemma \ref{BaireAndStronglyProper}, we conclude that in $V[H]$, $P_{\alpha}$ is proper with respect to $M$.

\end{proof}

\begin{theorem}\label{BIPpre}
Countable support iteration of $\mathrm{BIP}$ forcings is $\mathrm{BIP}$.
\end{theorem}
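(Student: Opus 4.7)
The plan is to induct on $\alpha$, the length of the iteration. At each stage the task splits into showing that $P_\alpha$ is proper in $V$ and that $\Vdash_T P_\alpha$ is proper for every Baire tree $T$; the first is immediate from Shelah's classical preservation theorem for countable support iterations of proper forcings, so the substantive work is the second.

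Fix such a $T$. I would apply the Key Lemma (Lemma \ref{KeyLemma}) to the iteration $\langle P_i, \dot{Q}_j : i \leq \alpha, j < \alpha\rangle$, which reduces the task to verifying $\Vdash_{T \times P_i} \dot{Q}_i$ is proper for each $i < \alpha$. For this, the induction hypothesis supplies that $P_i$ is $\mathrm{BIP}$: in particular $\Vdash_T P_i$ is proper, hence countably capturing by Remark \ref{remarkProper}. Since $P_i$ is itself proper (hence countably capturing) and $T$ is $\omega$-distributive, Lemma \ref{capturing} applied with $\mathbb{P} = P_i$ and $\mathbb{Q} = T$ yields $\Vdash_{P_i} T$ is $\omega$-distributive, so $T$ remains a Baire tree in $V^{P_i}$. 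The $\mathrm{BIP}$ hypothesis on $\dot{Q}_i$ inside $V^{P_i}$, applied to this Baire tree $T$, then gives $\Vdash_T \dot{Q}_i$ is proper in $V^{P_i}$, which after the standard identification of $P_i * T$ with $T \times P_i$ (valid because $T$ is a ground-model poset and the resulting generics are mutually generic) is exactly $\Vdash_{T \times P_i} \dot{Q}_i$ is proper.

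Most of the genuine work has already been absorbed into the Key Lemma, so what remains is largely packaging. The one piece that needs real care is the bidirectional use of Lemma \ref{capturing}: it is precisely the mechanism that converts properness of $P_i$ in $V^T$ (supplied by the $\mathrm{BIP}$ induction hypothesis) into Baireness of $T$ in $V^{P_i}$ (needed so that $T$ may be fed into the $\mathrm{BIP}$ property of $\dot{Q}_i$). Beyond that, the only subtlety is keeping track of how $\dot{Q}_i$ is interpreted simultaneously as a $P_i$-name and as a $(T \times P_i)$-name, which is the convention fixed in the remark following Lemma \ref{KeyLemma}.
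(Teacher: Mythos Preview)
Your proposal is correct and follows essentially the same approach as the paper: verify the hypothesis of the Key Lemma by using the induction hypothesis that $P_i$ is $\mathrm{BIP}$, pass through Lemma~\ref{capturing} to see that $T$ stays Baire in $V^{P_i}$, and then invoke the $\mathrm{BIP}$ property of $\dot{Q}_i$ there. The only cosmetic difference is that the paper splits off the successor case and argues it directly (via the density of $(P_{\beta+1})^V$ in $P_\beta * \dot{Q}_\beta$) before calling the Key Lemma at limits, whereas you invoke the Key Lemma uniformly for all $\alpha$; this is harmless since the Key Lemma already treats the successor step internally.
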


\begin{proof}
Let $\langle P_i, \dot{Q}_j: i\leq \alpha, j<\alpha\rangle$ be the iteration and $T$ be a given Baire tree. We show this by induction. If $\alpha=\beta+1$, then by the induction hypothesis, $\Vdash_T P_\beta$ is proper. In particular by Lemma \ref{capturing} and Remark \ref{remarkProper}, $\Vdash_{P_\beta} T$ is Baire. Since $\Vdash_{P_\beta} \dot{Q}_\beta$ is Baire indestructible, we have $\Vdash_{P_\beta} \Vdash_T \dot{Q}_\beta$ is proper. Hence by the product lemma, $\Vdash_T ``P_\beta \text{ is proper and }  \Vdash_{P_\beta} \dot{Q}_\beta$ is proper''. Since $\Vdash_T `` (P_{\beta+1})^V$ is a dense subset of $P_\beta * \dot{Q}_\beta$'', we see that $\Vdash_T `` P_{\beta+1}$ is proper''. If $\alpha$ is a limit, we check the hypothesis in the Key Lemma \ref{KeyLemma} is satisfied. For each $\beta<\alpha$, by the induction hypothesis, $\Vdash_T P_\beta$ is proper. Arguing as above, we have that $\Vdash_{T\times P_\beta} \dot{Q}_\beta$ is proper. Hence the hypothesis of Key Lemma \ref{KeyLemma} is satisfied, so we can conclude that $\Vdash_T P_\alpha$ is proper.
\end{proof}

\begin{cor}\label{BIPBaire}
Countable support iteration of $\mathrm{BIP}$ forcings preserve Baire trees.
\end{cor}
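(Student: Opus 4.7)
The plan is to derive this corollary almost immediately by combining Theorem \ref{BIPpre} with Lemma \ref{capturing} and Remark \ref{remarkProper}. Let $\langle P_i, \dot{Q}_j : i \leq \alpha, j < \alpha \rangle$ be a countable support iteration of $\mathrm{BIP}$ forcings and let $T$ be a Baire tree; the goal is to show $\Vdash_{P_\alpha} T$ remains $\omega$-distributive.

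First I would apply Theorem \ref{BIPpre} to conclude that $P_\alpha$ is itself $\mathrm{BIP}$. By the very definition of $\mathrm{BIP}$ (Definition \ref{BIP}), this yields both that $P_\alpha$ is proper in $V$ and that $\Vdash_T \check{P}_\alpha$ is proper. By Remark \ref{remarkProper}, properness implies countably capturing, and this implication is absolute (the proof via countable elementary submodels goes through verbatim in any model of $\mathrm{ZFC}$). Therefore $P_\alpha$ is countably capturing in $V$, and $\Vdash_T \check{P}_\alpha$ is countably capturing in $V^T$.

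Now I would set $\mathbb{P} = P_\alpha$ and $\mathbb{Q} = T$ in Lemma \ref{capturing}: the hypotheses of the lemma are satisfied since $P_\alpha$ is countably capturing and $T$ is $\omega$-distributive (being Baire). Clause (2) of the lemma, namely that $\Vdash_T \check{P}_\alpha$ is countably capturing, was just verified in the previous paragraph. The lemma then delivers clause (1), which is exactly the statement that $\Vdash_{P_\alpha} \check{T}$ is $\omega$-distributive, i.e., $T$ remains Baire in the $P_\alpha$-extension.

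There is no real obstacle here since all the work has already been done in Theorem \ref{BIPpre}; the corollary is essentially a repackaging of that theorem through the two-way equivalence of Lemma \ref{capturing}. The only subtlety worth flagging is the routine absoluteness of Remark \ref{remarkProper}, which allows us to use properness to extract countably capturing inside the $T$-generic extension without any extra argument.
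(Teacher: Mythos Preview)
Your proposal is correct and takes essentially the same approach as the paper, which simply cites Lemma~\ref{capturing}, Remark~\ref{remarkProper}, and Theorem~\ref{BIPpre} without further elaboration. Your write-up just unpacks that one-line citation in the expected way.
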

\begin{proof}
Immediately from Lemma \ref{capturing}, Remark \ref{remarkProper} and Theorem \ref{BIPpre}.
\end{proof}

\begin{proof}[Proof of Theorem \ref{iterationSacks}]\label{BIPproof}
Let $\kappa$ be a supercompact cardinal. Let $\langle P_\alpha, \dot{Q}_\alpha: \alpha<\kappa\rangle$ be the countable support iteration of BIP forcings guided by a Laver function of length $\kappa$. In this model, $\mathrm{MA}_{\omega_1}(\mathrm{BIP})$ holds and $\kappa=\omega_2$ (see Section 24 in \cite{MR2768691} for more details). We claim that $s\mathrm{RC}^B$ holds in this model. Let $G$ be a generic for $P_\kappa$. Let $T\in V[G]$ be a Baire tree of height $\omega_1$. Let the size of $T$ be $\theta$. Furthermore, we may assume $T$ is of the form $(\theta, <_T)$. Let $\dot{T}$ be a $P_\kappa$-name for $T$. Let $\lambda>> \max\{|\dot{T}|,\kappa,\theta\}$ be a sufficiently large regular cardinal.

Fix an elementary embedding $j: V\to M$ witnessing the $\lambda$-supercompactness of $\kappa$, namely, ${}^\lambda M\subset M$, $\mathrm{crit}(j)=\kappa$ and $j(\kappa)>\lambda$. Let $H\subset j(P_\kappa)/G$ be generic over $V[G]$. Then we can lift $j$ to an elementary embedding from $V[G]$ to $M[G][H]$. We will slightly abuse the notation by still using $j$ to refer to the lifted embedding in $V[G][H]$. Notice by the closure assumption, we have $j'' \theta, T \in M[G][H]$.

 By Corollary \ref{BIPBaire}, $T$ remains Baire in $M[G][H]$. Since $M[G][H]\models  (T, <_T)\simeq (j'' \theta, <_{j(T)})$, we know that $M[G][H]\models j(T)$ has a Baire subtree of size $<j(\kappa)$. By the elementarity of $j$, in $V[G]$, $T$ has a Baire subtree of size $<\kappa=\omega_2$.
\end{proof}

The following lemma gives yet another model separating $\mathrm{RC}^B$ from $\mathrm{RC}$.

\begin{lemma}\label{BIPimpliesNotRC}
$\mathrm{MA}_{\omega_1}(\mathrm{BIP})$ implies all $\aleph_1$ subtrees of $\sigma(\mathbb{R})$ are special.
\end{lemma}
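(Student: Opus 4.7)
The plan is to show that for any $\aleph_1$-subtree $T \subset \sigma(\mathbb{R})$, the Baumgartner specializing poset $S(T)$ is in fact BIP, and then apply $\mathrm{MA}_{\omega_1}(\mathrm{BIP})$ to $S(T)$ with the natural collection of $\aleph_1$-many dense sets to extract a specializing function.

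First, I would verify BIP. By Observation \ref{observation}(3), $\sigma(\mathbb{R})$ has no uncountable branches in any outer model. Let $U$ be any Baire tree and $G \subset U$ generic over $V$. Since $U$ adds no new countable sequences of ordinals, in particular no new reals, $\mathbb{R}^V = \mathbb{R}^{V[G]}$ and no new bounded countable well-ordered subsets of $\mathbb{R}$ are added; hence $\sigma(\mathbb{R})^V = \sigma(\mathbb{R})^{V[G]}$ and $T$ remains a subtree of $\sigma(\mathbb{R})$ in $V[G]$. By the third clause of Observation \ref{observation}, $T$ still has no uncountable branches in $V[G]$. Applying Theorem \ref{Baumgartner} inside $V[G]$ (the definition of $S(T)$ as the poset of finite partial functions $T \to \omega$ injective on chains is absolute, and $T$ is the same set), we conclude that $S(T)$ is c.c.c., hence proper, in $V[G]$. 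Thus $S(T)$ is BIP. Note the same argument in $V$ itself shows $S(T)$ is c.c.c., hence proper, in $V$.

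Next, I would produce the specializing function. For each $t \in T$, the set $D_t = \{p \in S(T) : t \in \mathrm{dom}(p)\}$ is dense in $S(T)$: given any $p \in S(T)$, since $\mathrm{dom}(p)$ is finite, we can choose a value in $\omega$ distinct from the values $p$ takes on the finitely many elements of $\mathrm{dom}(p)$ that are $<_T$-comparable with $t$, and extend. Since $|T| = \aleph_1$, the collection $\{D_t : t \in T\}$ has cardinality $\aleph_1$. Applying $\mathrm{MA}_{\omega_1}(\mathrm{BIP})$ to the BIP forcing $S(T)$ and this family yields a filter $F \subset S(T)$ meeting every $D_t$. Then $g = \bigcup F$ is a well-defined total function $g: T \to \omega$ (well-defined because $F$ is a filter, hence its elements are pairwise compatible partial functions) which is injective on chains (since each $p \in F$ has this property and compatibility in $S(T)$ is exactly finite-function compatibility together with chain-injectivity of the union). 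This $g$ is a specialization of $T$, completing the proof.

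The main obstacle is really the BIP verification; once that is in place, the application of $\mathrm{MA}_{\omega_1}(\mathrm{BIP})$ is completely standard. The essential content is the ``indestructibility'' of the absence of uncountable branches of $\sigma(\mathbb{R})$ under Baire forcing, which is precisely what Observation \ref{observation}(3) is designed to provide.
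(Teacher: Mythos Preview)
Your proposal is correct and follows exactly the approach of the paper: the paper's proof is a one-line observation that the Baumgartner specializing poset for any $\aleph_1$-subtree of $\sigma(\mathbb{R})$ is BIP (citing Theorem \ref{Baumgartner}), and you have simply unpacked that observation in full detail, including the absoluteness of $S(T)$ and the use of Observation \ref{observation}(3) to guarantee no uncountable branches appear in the Baire-tree extension.
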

\begin{proof}
This just follows from the observation that for any $\aleph_1$ subset $T'$ of $\sigma(\mathbb{R})$, the Baumgartner specializing forcing for $T'$ is BIP (see Theorem \ref{Baumgartner}).
\end{proof}

\section{The strength and limitations of the Baire Rado's Conjecture}\label{strength}

\begin{definition}
For any regular cardinal $\lambda$ and stationary $S\subset \lambda$, we say $S$ \emph{reflects} if there exists $\beta\in \lambda\cap \mathrm{cof}(>\omega)$ such that $S\cap \beta$ is stationary in $\beta$. Given a family $\mathcal{S}$ of stationary subsets of $\lambda$, we say $\mathcal{S}$ reflects simultaneously if there exists $\beta\in \lambda\cap \mathrm{cof}(>\omega)$ such that for each $S\in \mathcal{S}$, $S\cap \beta$ is stationary. 
\end{definition}

Todor\v{c}evi\'{c} (\cite{MR686495}, Theorem 8) showed that $\mathrm{RC}$ implies any stationary subset of $\lambda\cap \mathrm{cof}(\omega)$ reflects for any regular $\lambda\geq \omega_2$. The proof there uses some equivalent characterizations of $\mathrm{RC}$. We include a short argument here (from $\mathrm{RC}^B$ actually) using directly the tree formulation of $\mathrm{RC}$ as in Definition \ref{RC} and \ref{baireRC}. It is worth noting that Sakai derives the same conclusion from the Semistationary Reflection Principle (see \cite{MR2387938}), which is a consequence of $\mathrm{RC}^B$ by Theorem \ref{Doebler}.

\begin{theorem}[Todor\v{c}evi\'{c}]\label{ORefl}
$\mathrm{RC}^B$ implies any stationary subset of $\lambda\cap \mathrm{cof}(\omega)$ reflects for regular cardinal $\lambda\geq \omega_2$.
\end{theorem}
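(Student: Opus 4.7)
The plan is to argue by contradiction. Suppose $S\subseteq \lambda\cap \mathrm{cof}(\omega)$ is stationary and does not reflect, so for every $\beta \in \lambda \cap \mathrm{cof}(>\omega)$ we may fix a club $C_\beta \subseteq \beta \setminus S$. I will produce a Baire tree $T$, apply $\mathrm{RC}^B$ to extract a nonspecial subtree of size $\aleph_1$, and then derive a contradiction from its structure combined with some $C_\beta$.

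Let $T$ consist of the closed bounded subsets $t \subset \lambda$ with $t \cap S = \emptyset$, ordered by end-extension. To see $T$ is Baire, given a countable family $\{U_n : n<\omega\}$ of dense open subsets of $T$ and a node $t \in T$, I will fix any $\beta \in \lambda \cap \mathrm{cof}(\omega_1)$ with $\beta>\max t$ and build an end-extending sequence $t = t_0 \leq t_1 \leq \cdots$ with $t_n \in U_n$ and $\max t_n \in C_\beta$ strictly increasing. This is possible because each $U_n$ is closed under end-extension and $C_\beta$ is unbounded in $\beta$. Since $C_\beta$ is closed in $\beta$ and $\mathrm{cf}(\beta)=\omega_1$, the supremum $\sigma := \sup_n \max t_n$ lies in $C_\beta$, hence $\sigma \notin S$, and $\bar{t} := \bigcup_n t_n \cup \{\sigma\}$ is a node of $T$ in $\bigcap_n U_n$ above $t$.

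Applying $\mathrm{RC}^B$ yields a nonspecial subtree $T_0 \subseteq T$ with $|T_0|=\aleph_1$. For each $m \in \lambda \setminus S$ the set $T_0^m := \{t \in T_0 : \max t = m\}$ is an antichain, because any two comparable nodes sharing a max must coincide. Thus if the image $M := \{\max t : t \in T_0\}$ were countable, $T_0 = \bigsqcup_{m \in M} T_0^m$ would decompose into countably many antichains and be special, contradicting the choice of $T_0$. Hence $|M| = \aleph_1$, and letting $\beta$ be the least ordinal with $|M \cap \beta| = \aleph_1$, minimality forces $\mathrm{cf}(\beta) = \omega_1$ with $M\cap\beta$ cofinal in $\beta$ of order type $\omega_1$.

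The main obstacle, which I view as the heart of the argument, is to use $C_\beta$ to manufacture an explicit specialization of $T_0$, thereby contradicting its nonspecialness. My plan is to enumerate $C_\beta$ continuously as $\{c_\alpha : \alpha<\omega_1\}$ and to apply Theorem \ref{PressingDownTree} to the regressive map on $T_0 \restriction \beta := \{t\in T_0 : \max t<\beta\}$ that sends $t$ to its longest initial segment whose top lies in $C_\beta$ (choosing an arbitrary regressive value when no such segment exists). Stabilizing this map on a nonspecial subtree should pin the tops into a fixed interval $(c_{\alpha-1},c_\alpha]$ of $\beta \setminus S$; since this interval has countable cofinality, iterating the pressing-down along a chosen cofinal $\omega$-sequence within it ought to yield a countable partition of $T_0 \restriction \beta$ into antichains. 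The part $\{t \in T_0 : \max t \geq \beta\}$ can be partitioned separately by the unique successor level at which a node first exceeds $\beta$ --- uniqueness here uses that $\mathrm{cf}(\beta)=\omega_1$ prohibits continuous crossings of $\beta$ from below. Gluing these two specializations together would specialize $T_0$ and complete the contradiction.
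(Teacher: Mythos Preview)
Your Baireness argument for $T$ (closed bounded subsets of $\lambda \setminus S$) is correct, and the overall shape---apply $\mathrm{RC}^B$, then press down using a club---is right. But the pressing-down step does not work as written, and the defect is structural rather than cosmetic.

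After stabilizing your map at some $s$ with $\max(s) = c_\alpha \in C_\beta$, all you learn is that each $t$ in the resulting nonspecial subtree has no elements of $C_\beta$ strictly above $c_\alpha$. This does \emph{not} pin $\max(t)$ into any bounded interval: a closed bounded subset of $\lambda \setminus S$ can simply jump over elements of $C_\beta$ without containing them. For instance, if $c_{\alpha+17}+1 \notin S$ then $t = s \cup \{c_{\alpha+17}+1\}$ satisfies $\max(t \cap C_\beta) = c_\alpha$ while $\max(t) > c_{\alpha+17}$. So no bound on $\max(t)$ follows, and the intended specialization collapses. The underlying problem is that $C_\beta$ and the nodes of $T$ both live inside $\lambda \setminus S$; there is no tension between them for the pressing down to exploit. (There are smaller issues as well: your $\beta$, chosen as the least ordinal with $|M \cap \beta| = \aleph_1$, need not make $T_0 \restriction \beta$ nonspecial, which you need before invoking Theorem~\ref{PressingDownTree}; and your ``first successor level exceeding $\beta$'' map on $\{t \in T_0 : \max t \geq \beta\}$ is constant on chains, not injective, so it does not specialize that piece.)

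The paper works with the dual tree $T(S)$ of continuous increasing maps $\gamma+1 \to S$ (equivalently, closed bounded subsets of $S$), which is Baire simply because $S$ is stationary. One takes a nonspecial subtree $T'$ of size $\aleph_1$ with \emph{minimal} $\delta = \sup\{\max t : t \in T'\}$; minimality forces $\mathrm{cf}(\delta) = \omega_1$, and one shows directly that $\delta$ is a reflection point. If $S \cap \delta$ were nonstationary, fix a club $C \subseteq \delta$ disjoint from $S$. Now the regressive map sending $t$ of limit height to the least $s <_T t$ with $(\max s, \max t) \cap C = \emptyset$ is well-defined precisely because $\max(t) \in S$ while $C \cap S = \emptyset$, so $\max(t)$ is genuinely trapped below the next $C$-point. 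Stabilizing at $s$ gives $\max(t) \leq \min(C \setminus (\max(s)+1)) < \delta$, contradicting minimality of $\delta$. The asymmetry---tops of nodes lie in $S$, the club avoids $S$---is exactly what your dual tree lacks.
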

\begin{proof}
Let $S\subset \lambda\cap \mathrm{cof}(\omega)$ be given. Let $T(S)$ be the tree consisting of functions $t: \gamma+1\to S$ for some $\gamma<\omega_1$, such that $t$ is increasing and continuous, ordered by end extension. For $t\in T(S)$, let $\max (t)$ denote the maximal element in the image of $t$. The stationarity of $S$ implies $T(S)$ is Baire by the standard argument (see Theorem 23.8 in \cite{MR1940513}). For any subtree $T\subset  T(S)$, let $\sup T$ be $\sup \{\max (t): t\in T\}$. 

Apply $\mathrm{RC}^B$ to $T(S)$ and pick some $\aleph_1$-sized nonspecial subtree $T\subseteq T(S)$ with the least supremum, say $\delta$. The ordinal $\delta$ must have cofinality $\omega_1$. To see this, suppose otherwise there exists an increasing sequence $\langle \delta_i: i\in \omega\rangle$ converging to $\delta$. By the minimality of $\delta$, for each $i\in \omega$, $T_i=_{def} \{t\in T: \max(t)\leq \delta_i: i\in \omega\}$ is special. Notice that $T'=\{t\in T: \max (t)=\delta\}$ is also special since it is an antichain. Hence $T=\bigcup_{i\in \omega} T_i \cup T'$ is a countable union of special trees, which implies that $T$ is also special. This contradicts our assumption.

We claim that $S\cap \delta$ is stationary. Suppose not for the sake of contradiction, then there exists a club $C\subset \delta$ that is disjoint from $S\cap \delta$. Without loss of generality, we can assume $T$ is downward closed. Define a pressing down function $f: T\backslash \{\emptyset\}\to T$ where for each $t\in T$ of limit height, $f(t)=s$ if $s$ is the  $<_T$-least predecessor of $t$ such that $(\max (s), \max (t))\cap C =\emptyset$. By the Pressing Down Lemma for trees, there exists a nonspecial subtree $T'\subset T$ such that $f$ gets constant value $s$ with $\gamma=\max (s)$. Then for each $t\in T'$, $\max (t)\leq \min (C\backslash \gamma)<\delta$. Hence $\sup (T') <\delta$. By the minimality of $\delta$, $T'$ is special. This is a contradiction.
\end{proof}

\begin{remark}
The theorem above shows that $\mathrm{RC}^B$ implies the ordinary stationary reflection at ordinals of cofinality $\omega_1$. In general, $\mathrm{RC}$ does not imply stationary reflection at ordinals of cofinality $>\omega_1$. In fact $\mathrm{RC}$ is compatible with the fact that $\aleph_{\omega+1}\cap \mathrm{cof}(\geq \omega_2)$ carries a partial square (see the remark proceeding the Claim in the section 5 of \cite{MR1450520}, page 192), which in turn implies the failure of the ordinal stationary reflection at ordinals of cofinality $\geq \omega_2$ (see Theorem 4.2 in \cite{MR2768694}).
\end{remark}

Similar to the argument above, we are able to present an alternative argument that $\mathrm{RC}^B$ implies Semi-stationary Reflection (SSR) due to Doebler \cite{MR3065118}.

\begin{theorem}[Doebler, \cite{MR3065118}]\label{Doebler}
$\mathrm{RC}^B$ implies $\mathrm{SSR}$, where the latter means that for any regular cardinal $\lambda\geq \omega_2$ and any stationary $S\subset [\lambda]^\omega$ upward closed under $\sqsubset$ ($x\sqsubset y$ iff $x\subset y$ and $x\cap \omega_1 = y\cap \omega_1$), there exists $W\in [\lambda]^{\omega_1}$ containing $\omega_1$ such that $S\cap [W]^\omega$ is stationary.
\end{theorem}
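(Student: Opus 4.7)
The plan is to mirror the tree argument of Theorem~\ref{ORefl}, replacing the setting $S\subset\lambda\cap\mathrm{cof}(\omega)$ with $S\subset[\lambda]^\omega$. Define the tree $T(S)$ whose nodes are continuous $\subset$-increasing sequences $\vec{x}=\langle x_\alpha:\alpha\leq\gamma\rangle$ with $\gamma<\omega_1$, each $x_\alpha\in S$, and $x_{\alpha+1}\cap\omega_1>x_\alpha\cap\omega_1$ at successors, ordered by end extension. Strict growth of the $\omega_1$-intersection forces $x_\alpha\cap\omega_1\geq\alpha$ at every level and is the direct analogue of strictly increasing $\lambda$-valued functions in Theorem~\ref{ORefl}. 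I would first show $T(S)$ is Baire via stationarity and an elementary-submodel argument: given dense opens $\langle U_n:n<\omega\rangle$ and a node $\vec{x}$, pick a countable $M\prec H(\theta)$ for sufficiently large $\theta$ with $M\cap\lambda\in S$ and the relevant parameters in $M$, and build $\vec{x}_n\in M\cap U_n$ extending $\vec{x}$ so the union of their images exhausts $M\cap\lambda$; the continuous extension then terminates at $M\cap\lambda\in S$ and lies in $T(S)$.

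Applying $\mathrm{RC}^B$ produces a nonspecial subtree $T'\subset T(S)$ of size $\aleph_1$. Since $T'$ has height $\omega_1$ and $x_\alpha\cap\omega_1\geq\alpha$, the $\omega_1$-intersections of elements of $T'$ are cofinal in $\omega_1$. Setting $W=\bigcup\{x_\alpha:\vec{x}\in T',\,\alpha\in\mathrm{dom}(\vec{x})\}$, each $x_\alpha$ contains $x_\alpha\cap\omega_1$ as an initial segment of $\omega_1$, giving $\omega_1\subset W$ and $|W|=\aleph_1$, so $W$ is a reasonable candidate reflecting set.

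The main step, and main obstacle, is proving $S\cap[W]^\omega$ is stationary in $[W]^\omega$. Suppose for contradiction $F:W^{<\omega}\to W$ witnesses nonstationarity. For each $\vec{x}\in T'$ of limit height $\gamma$, continuity gives $x_\gamma\in S\cap[W]^\omega$, hence not closed under $F$; pick (in a fixed well-ordering) a least witness $s_{\vec{x}}\in x_\gamma^{<\omega}$ with $F(s_{\vec{x}})\notin x_\gamma$ and the least $\beta_{\vec{x}}<\gamma$ with $s_{\vec{x}}\subset x_{\beta_{\vec{x}}}$. Apply Theorem~\ref{PressingDownTree} to the regressive map $\vec{x}\mapsto\vec{x}\restriction(\beta_{\vec{x}}+1)$, extended to all of $T'$ by sending any remaining node to its parent (or to the root for a length-one node), to obtain a nonspecial $T''\subset T'$ constantly pressed to some $\vec{y}^*$. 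Partitioning the limit-height nodes of $T''$ by the countably many possible values of $s_{\vec{x}}\subset y^*_{\max}$ gives a nonspecial $T'''$ on which $s_{\vec{x}}=s_0$ is fixed, so $F(s_0)\notin x_\gamma$ for every limit-height $\vec{x}\in T'''$.

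When $F(s_0)<\omega_1$ the contradiction is immediate: any $\vec{x}\in T'''$ of height $\gamma>F(s_0)$ has $x_\gamma\supset x_\gamma\cap\omega_1\geq\gamma>F(s_0)$, so $F(s_0)\in x_\gamma$, a contradiction. When $F(s_0)\geq\omega_1$, I would use the upward closure of $S$ under $\sqsubset$ together with a minimality choice for $T'$: $F(s_0)$ lies in some $z_{\alpha_0}$ for $\vec{z}\in T'$, and by $\sqsubset$-grafting $F(s_0)$ onto an extension of $\vec{y}^*$ at a fresh level (upward closure keeps the grafted set in $S$ since $F(s_0)\geq\omega_1$ does not perturb the $\omega_1$-intersection), one produces a limit-height $\vec{x}\in T'''$ with $F(s_0)\in x_\alpha$ for some $\alpha<\gamma$, contradicting the constant witness. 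The hardest part is calibrating the grafting step so the modified node lies inside $T'''$; this is precisely where the upward closure of $S$ under $\sqsubset$ plays the role that the club $C$ played in the pressing-down step of Theorem~\ref{ORefl}.
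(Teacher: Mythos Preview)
Your overall framework matches the paper's: build the tree $T(S)$ of continuous increasing $S$-valued sequences, use stationarity to see it is Baire, apply $\mathrm{RC}^B$ to get a nonspecial $T'$ of size $\aleph_1$, set $W$ to be the union, and argue by pressing down on $T'$ that $S\cap[W]^\omega$ is stationary. The Baireness argument and the construction of $W$ are fine.

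The genuine gap is in your handling of the case $F(s_0)\geq\omega_1$. Once $T'''$ has been fixed by two applications of pressing down, it is a specific nonspecial subtree; you cannot ``graft'' new nodes into it, and there is no reason a node you manufacture in $T(S)$ (or even in $T'$) containing $F(s_0)$ should lie in $T'''$. The minimality you allude to was never set up, and in any case no minimality hypothesis on $T'$ will force a grafted node into a subtree produced after the fact by pressing down. So this case is not handled, and the argument as written does not close.

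The paper's fix is to use the $\sqsubset$-upward closure of $S$ \emph{before} pressing down, at the witness-selection step, so that the bad case never arises. After arranging that $F''y^{<\omega}=\mathrm{cl}_F(y)$ for every countable $y$, observe that for $t\in T'$ the set $\mathrm{cl}_F(\max(t))\supset\max(t)$ is not in $S$, while $\max(t)\in S$. If $\mathrm{cl}_F(\max(t))\cap\omega_1=\max(t)\cap\omega_1$ then $\max(t)\sqsubset\mathrm{cl}_F(\max(t))$, forcing $\mathrm{cl}_F(\max(t))\in S$ by upward closure, a contradiction. Hence $\mathrm{cl}_F(\max(t))\cap\omega_1>\max(t)\cap\omega_1$, so there is $\bar a_t\in\max(t)^{<\omega}$ with $F(\bar a_t)\in\omega_1$ and $F(\bar a_t)\geq\max(t)\cap\omega_1$. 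Now press down on $t\mapsto$ (least predecessor whose top contains $\bar a_t$), then thin to fix $\bar a_t=\bar a$; since $F(\bar a)\in\omega_1$ bounds $\max(t)\cap\omega_1$ for every $t$ in the resulting nonspecial subtree, its height is bounded by $F(\bar a)$, which is the desired contradiction. In short: invoke $\sqsubset$-closure to choose the witness inside $\omega_1$, and your ``easy case'' is the only case.
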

\begin{proof}

Fix $\lambda, S$ as above. We may assume for any $x\in S$, $x\cap \omega_1\in \omega_1$.

Build the tree $T(S)$ consisting of countable continuous $\lesssim$-increasing sequences of elements in $S$, where for any $a\neq b\in S$, $a\lesssim b$ iff $a\subset b$, $a\cap \omega_1 < b\cap \omega_1$ and $\sup (a) < \sup (b)$. Hence by design, each element $t$ in $T(S)$ has a $\subset$-maximum element. Let $\max (t)$ denote this element. This tree is clearly Baire by the fact that $S$ is stationary.

Apply $\mathrm{RC}^B$, we can find a subtree $T'\subset T$ such that $T'$ is nonspecial. Let $W=\bigcup_{t\in T'} \max (t)$. We can choose such a $T'$ such that $\sup (W)$ is the least. Notice that $W\supset \omega_1$ and $\mathrm{cf}(\mathrm{sup}(W))>\omega$. This follows from the fact that $T'$ is nonspecial and the minimality of $\mathrm{cf}(\mathrm{sup}(W))$.

We claim that $S\cap [W]^\omega$ is stationary. Suppose not, then there exists a function $F: W^{<\omega}\to W$ such that $cl_F \cap S =\emptyset$. We may redefine $F$ such that for any $y\in [W]^\omega$, $F'' y^{<\omega} =cl_F(y)$. For each $t\in T'$, we have $\max (t) \in S$, so by our assumption, $\max (t)$ is not closed under $F$. In particular, $F'' \max (t) ^{<\omega} \not \in S$. Since $S$ is upward closed under $\sqsubset$, we know that $F'' \max (t) ^{<\omega} \cap \omega_1 > \max (t)\cap \omega_1$. Hence there exists $\bar{a}_t\in \max (t)^{<\omega}$ such that $F(\bar{a}_t)\geq \max (t)\cap \omega_1$ and $F(\bar{a}_t)\in \omega_1$. We can use this fact to define a regressive function, mapping each $t\in T'$ of limit height to the $<_T$-least $s\leq_T t$ such that $\bar{a}_t \subset \max (s)$. By the Pressing Down Lemma for nonspecial trees and the fact that special  trees are closed under countable union, there exists $T''\subset T'$ nonspecial and $\bar{a}\in W^{<\omega}$ with $F(\bar{a})\in \omega_1$ such that for each $t\in T''$, $F(\bar{a})\geq \max (t) \cap \omega_1 $. But this means that the height of $T''$ is bounded above by $F(\bar{a})$, which is a contradiction to the fact that $T''$ is nonspecial.

\end{proof}

From the aspect of simultaneous reflection, Theorem \ref{ORefl} is  optimal.

\begin{theorem}\label{NotSimul}
$\mathrm{RC}^B$ does not imply any two stationary subsets of $\omega_2\cap \mathrm{cof}(\omega)$ reflects simultaneously.
\end{theorem}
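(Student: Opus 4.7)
I would establish this by constructing a model of $\mathrm{RC}$ (hence of $\mathrm{RC}^B$) in which some partition $\omega_2 \cap \mathrm{cof}(\omega) = S_0 \sqcup S_1$ into stationary pieces fails to reflect simultaneously at every ordinal of cofinality $\omega_1$. Start from $V \models \mathrm{GCH}$ with a supercompact cardinal $\kappa$, and fix in $V$, by Solovay's theorem, stationary sets $S_0, S_1 \subseteq \kappa \cap \mathrm{cof}(\omega)$ partitioning it. Force with the product $\mathbb{P} = \mathbb{M}_\kappa \times \mathbb{C}$, where $\mathbb{M}_\kappa$ is the Mitchell forcing of Subsection \ref{MitchellRC1} and $\mathbb{C}$ is the countable-support product, indexed by $V$-regular $\alpha \in (\omega_1, \kappa)$, of the poset $\mathbb{C}_\alpha$ whose conditions are pairs $(\epsilon, c)$ with $\epsilon \in 2$ and $c$ a closed bounded subset of $\alpha$ disjoint from $S_\epsilon$. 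The role of $\epsilon$ is to \emph{generically choose} at each $\alpha$ which of $S_0, S_1$ is to be made nonstationary in $\alpha$.

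Let $V_1 = V[G \times H]$. The forcing $\mathbb{C}$ is countably closed, and under $\mathrm{GCH}$ it is $\kappa$-cc by a $\Delta$-system argument, so $\omega_2^{V_1} = \kappa$. I would then verify: (i) both $S_0, S_1$ remain stationary in $V_1$, by a density argument exploiting that $\mathbb{C}$'s supports are countable and that the ``$1 - \epsilon$'' branch is always freely available at fresh coordinates, so that neither set is closed out of any generic club; (ii) simultaneous reflection fails at every $\alpha \in \omega_2^{V_1} \cap \mathrm{cof}(\omega_1)$, since every such $\alpha$ is a $V$-regular cardinal in $(\omega_1, \kappa)$ (Mitchell forcing gives all these ordinals $V_1$-cofinality $\omega_1$), and the generic club $c_\alpha$ witnesses that $S_{\epsilon_\alpha} \cap \alpha$ is nonstationary in $\alpha$, so $S_0 \cap \alpha$ and $S_1 \cap \alpha$ cannot both be stationary.

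For $\mathrm{RC}$ in $V_1$ I would replay the argument of Subsection \ref{MitchellRC1}. Given a nonspecial $T \in V_1$ of size $\theta$, fix a $\lambda$-supercompactness embedding $j : V \to M$ for large enough regular $\lambda$; since $\mathbb{P} \subseteq V_\kappa$ is $\kappa$-cc, $j \restriction \mathbb{P} = \mathrm{id}$ embeds completely into $j(\mathbb{P})$, and we can lift $j$ to $j^+ : V_1 \to M[G \times H \times K]$ via generic $K$ for the quotient. In $V_1$, the quotient $j(\mathbb{P})/(G \times H)$ should decompose as a projection of $\mathrm{Add}(\omega, j(\kappa)) \times R^*$ with $R^*$ countably closed in $V_1$, by combining the term-forcing tail of Mitchell and the $\mathbb{C}$-tail beyond $\kappa$ into a single countably closed factor. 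Claims \ref{notspecializedclosed} and \ref{notspecializedcohen} then imply $T$ stays nonspecial in the further extension, so $(j'' \theta, <_{j^+(T)})$ is a nonspecial subtree of $j^+(T)$ of size less than $j(\kappa) = \omega_2^{M[G \times H \times K]}$, and elementarity of $j^+$ delivers a nonspecial $\aleph_1$-subtree of $T$ in $V_1$. The main obstacle is this quotient decomposition, which must be checked in close parallel to Claim \ref{quotient}; a secondary subtlety is the stationarity preservation step (i), requiring care with the interaction of the two factors of $\mathbb{P}$ and with the fact that $\mathbb{C}$ is defined using $S_0, S_1 \in V$ (so that $j(\mathbb{C})$ above $\kappa$ is defined using $j(S_0), j(S_1)$, which agree with $S_0, S_1$ below $\kappa$).
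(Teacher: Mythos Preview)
Your approach has a genuine gap: the forcing $\mathbb{C}$ is \emph{not} countably closed. At a coordinate $\alpha$, given a decreasing sequence $(\epsilon, c_n)_{n<\omega}$ in $\mathbb{C}_\alpha$, the only candidate for a lower bound is $(\epsilon, \bigcup_n c_n \cup \{\gamma\})$ where $\gamma = \sup_n \max c_n$; but $\gamma$ has cofinality $\omega$ and nothing prevents $\gamma \in S_\epsilon$, in which case no lower bound exists. Since $S_\epsilon$ is stationary (indeed unbounded) in each regular $\alpha$, this obstruction is unavoidable. The failure of countable closure breaks your argument in several places: the preservation of $\omega_1$, the stationarity-preservation step for $S_0,S_1$, and most importantly the quotient decomposition into Cohen $\times$ countably closed needed to rerun the $\mathrm{RC}$ argument via Claims~\ref{notspecializedclosed} and~\ref{notspecializedcohen}. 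There is no easy repair, because at the many $\alpha < \kappa$ where both $S_0\cap\alpha$ and $S_1\cap\alpha$ are stationary (e.g.\ measurables below $\kappa$), neither branch of the lottery yields a well-behaved club-shooting.

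A second gap: your claim that every $\alpha < \kappa$ of $V_1$-cofinality $\omega_1$ is $V$-regular is false. Any $\beta$ with $\mathrm{cf}^V(\beta)\geq\omega_1$ acquires $V_1$-cofinality $\omega_1$ after Mitchell forcing, including $V$-singular ordinals such as $\aleph_{\omega_1}^V$ or $\aleph_{\omega_2}^V$. Your product has no coordinate at such $\beta$, and nothing you have said rules out simultaneous reflection of $S_0,S_1$ there.

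The paper's argument is quite different and avoids both problems. It first arranges $\mathrm{RC}^B$ to be indestructible under $\omega_2$-directed closed forcing (by L\'evy-collapsing a supercompact), then forces with a single $\omega_2$-strategically closed poset $\mathbb{P}$ that \emph{generically adds} the two non-simultaneously-reflecting stationary sets $S_0,S_1$. The key trick is that $\mathbb{P}*\dot{Q}_i$ (where $\dot{Q}_i$ shoots a club through the complement of $S_i$) has a dense $\omega_2$-directed closed subset; given a Baire tree $T$ in $V^{\mathbb{P}}$, one finds $t\in T$ and $i<2$ such that $T\restriction t$ remains Baire after $Q_i$, applies $\mathrm{RC}^B$ in $V^{\mathbb{P}*\dot{Q}_i}$ to get the small nonspecial subtree, and pulls it back by $\omega_1$-distributivity of $Q_i$. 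This sidesteps any need to analyze a product of club-shootings.
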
 

\begin{proof}
We first prepare the ground model $V$ such that it satisfies that $\mathrm{RC}^B$ is indestructible under $\omega_2$-directed closed forcings. We can do this by Lévy collapsing $\kappa$ to $\omega_2$ where $\kappa$ is a supercompact cardinal. The heart of the argument is the analysis of the existence of a master condition needed to lift an elementary embedding in a countably closed generic extension. We refer the reader to Cummings' Chapter \cite{MR2768691} (specifically sections 9, 12, 14) in the Handbook of Set Theory for more details.

Let $\mathbb{P}$ be the standard poset that adds two stationary subsets of $\omega_2\cap \mathrm{cof}(\omega)$ that do not reflect simultaneously. More precisely, $p\in \mathbb{P}$ iff $p=(p_0, p_1)$ where
\begin{itemize}
\item $p_0$ and $p_1$ are partial functions from $\omega_2$ to $2$, taking value $1$ only on ordinals of countable cofinality, 
\item $\mathrm{dom}(p_0)=\mathrm{dom}(p_1)<\omega_2$, 
\item for no $\alpha<\omega_2$, $p_0(\alpha) = p_1 (\alpha)=1$,
\item for all $\beta \in \omega_2\cap \mathrm{cof}(\omega_1)$ and $\beta\leq \mathrm{dom}(p_0)$, there exists a club subset $C\subset \beta$ such that there is $i<2$, for any $\alpha\in C$, we have $p_i(\alpha)=0$.
\end{itemize} 
The elements in $\mathbb{P}$ are ordered by the coordinate-wise reverse inclusion. For $p\in \mathbb{P}$, we will sometimes abuse the notation by using $\mathrm{dom} (p)$ to mean $\mathrm{dom} (p_0)$ in the following.
The proof in Example 6.5 of \cite{MR2768691} easily adapts to show that $\mathbb{P}$ is $\omega_2$-strategically closed and $\mathbb{P}$ adds the characteristic functions of two disjoint stationary subsets of $\omega_2\cap \mathrm{cof}(\omega)$ that do not reflect simultaneously.

Let $\dot{S}_0, \dot{S}_1$ be the $\mathbb{P}$-name for the two stationary sets that are added by $\mathbb{P}$. Define in $V^\mathbb{P}$ forcings $\dot{Q}_i$ for shooting a club with closed initial segments through the complement of $\dot{S}_i$ for each $i<2$. It is a standard fact that $\mathbb{P}*\dot{Q}_i$ has a dense $\omega_2$-directed closed subset for any $i<2$. To see this, fix $i<2$. We may assume $i=0$ since the argument for the other case is symmetric. Consider $A=\{(p, B)\in \mathbb{P}* \dot{Q}_0 : \mathrm{dom} (p_0)=\max B+1, p_0\restriction B\equiv 0\}$. First we check that $A$ is $\omega_2$-directed closed. Suppose $D=\{(p^\alpha, B_\alpha)\in A : \alpha< \omega_1\}$ is a given directed family. Let $p^\infty=(p_0^\infty, p^\infty_1)$ where $p^\infty_i =\bigcup_{\alpha<\omega_1} p^\alpha_i$ for $i<2$ and $B_{\infty} = \bigcup_{\alpha<\omega_1} B_{\alpha}$. Let $\delta=\mathrm{dom}(p^\infty)$. If there exists $\alpha<\omega_1$ such that $\mathrm{dom} (p^\alpha) =\delta$, then we are done since $(p^\alpha, B_\alpha)$ will be the strongest condition in $D$. Otherwise, $B_\infty$ is a closed unbounded subset of $\delta$ such that $p^\infty_0 \restriction B_\infty \equiv 0$. Then $((p^\infty_0 \cup \{(\delta, 0)\}, p^\infty_1 \cup \{(\delta, 0)\}), B_\infty \cup \{\delta\})$ will be a desired lower bound for $D$. Next we check that $A$ is a dense subset of $\mathbb{P}*\dot{Q}_i$. Given $(p,\dot{q})\in \mathbb{P}*\dot{Q}_i$, since $\mathbb{P}$ is $\omega_2$-strategically closed, there exist $p^0\leq_{\mathbb{P}} p$ and $B_0\in V$ such that $p^0\Vdash_{\mathbb{P}} B_0=\dot{q}$. We may assume that $\mathrm{dom}(p^0)>\max (B_0)$. Let $\gamma>\mathrm{dom} (p^0)$. We can extend $(p^0, B_0)$ to $(p^1, B_1)$ by putting $\gamma$ into $B_1$ and setting $p^1_i(\alpha)$ to $0$ for any $i<2$ and any $\mathrm{dom}(p^0)\leq \alpha \leq \gamma$. Then $(p^1, B_1)\leq_{\mathbb{P}*\dot{Q}_0} (p,\dot{q})$ and $(p^1, B_1)\in A$.

Let $G\subset \mathbb{P}$ be generic over $V$. We show that $V[G]$ is the desired model. To this end, fix a Baire tree $T\in V[G]$. 
\begin{claim}\label{remain}
In $V[G]$, it is forced by $T$ that there exists $i<2$, such that $\omega_2^V\cap \mathrm{cof}^V(\omega)-S_i$ remains stationary.
\end{claim}

\begin{proof}
Otherwise, we can find $H\subset T$ generic over $V[G]$ such that in $V[G*H]$, for each $i<2$, there is a club $C_i\subset S_i\cup (\omega_2^V\cap \mathrm{cof}^V(\omega_1))$. As $T$ is Baire, $cf^{V[G*H]}(\omega_2^V)>\omega$. Hence in $V[G*H]$, the fact that $S_0\cap S_1=\emptyset$ implies that $C_0\cap C_1\subset \omega_2^V\cap \mathrm{cof}^V(\omega_1)$ is a club. But this implies that some ordinal whose cofinality is $\omega_1$ in $V$ now has cofinality $\omega$ in $V[G*H]$. This contradicts the $\omega_1$-distributivity of $\mathbb{P}$ in $V$ and the Baireness of $T$ in $V[G]$.
\end{proof}

\begin{claim}\label{preserve}
In $V[G]$, it is forced by $T$ that there exists $i<2$, $\dot{Q}_i$ is $\omega$-distributive.
\end{claim}
\begin{proof}
Work in $V[G]$. For any given $t'\in T$, use Claim \ref{remain} to find $t\leq t'$ and $i<2$ such that $t\Vdash_T \omega_2^V\cap \mathrm{cof}^V(\omega)-S_i$ is stationary. Let $H\subset T$ be generic over $V[G]$ containing $t$. We show $V[G*H]\models Q_i=(\dot{Q}_i)^{V[G]}$ is $\omega$-distributive. Let $\dot{\tau}$ be a $Q_i$-name for a $\omega$-sequence of ordinals. Let $M\prec H(\lambda)$ be countable and contain relevant objects such that $\gamma=\sup (M\cap \omega_2^V) \not \in S_i$ where $\lambda$ is some large enough regular cardinal. Fix a generic sequence $\langle p_i: i\in \omega\rangle$ for $M$ so in particular for any $j<\omega$ there exists some $i<\omega$ such that $p_i$ decides $\dot{\tau}(j)$ and $\sup_{i\in \omega} (p_i) = \gamma$. As $T$ is Baire in $V[G]$, $\langle p_i: i\in \omega\rangle \in V[G]$. Since $\gamma\not \in S_i$, we see that $\bigcup_{i\in \omega} p_i \cup \{\gamma\}\in Q_i$, which decides $\dot{\tau}$.
\end{proof}

By Claim \ref{preserve}, in $V[G]$, find $t\in T$ and $i<2$ such that $t\Vdash_T Q_i$ is $\omega$-distributive.  Define $T\restriction t $ to be $\{s\in T: s\leq t \vee t\leq s\}$.
By the Product Lemma, $\Vdash_{Q_i} T\restriction t$ is Baire. Let $R$ be generic for $Q_i$ over $V[G]$. Since $\mathbb{P}*\dot{Q}_i$ has a $\omega_2$-directed closed dense subset, it follows that $V[G*R]\models \mathrm{RC}^B$. Hence there exists a nonspecial $T'\leq T\restriction t$ of size $\aleph_1$. Since over $V[G]$, $Q_i$ is $\omega_1$-distributive, we know that $T'\in V[G]$ and it remains nonspecial in $V[G]$.

\end{proof}

\begin{definition}\label{ADS}
Let $\mu$ be a cardinal. $\langle A_i\subset \mu : i<\mu^+\rangle$ is said to be an almost disjoint sequence if for each $i<\mu^+$, $A_i$ is unbounded in $\mu$ and for each $\beta<\mu^+$, there exists $F: \beta \to \mu$ such that $\langle A_{i}\backslash F(i): i<\beta\rangle$ is pairwise disjoint. $\mathrm{ADS}_\mu$ abbreviates the assertion that there exists such a sequence.
\end{definition}

The interesting case of the above principle is when $\mu$ is singular.
It is known (\cite{MR1838355}, Theorem 4.1) $\mathrm{ADS}_\mu$ follows from the existence of a PCF-theoretic object called a \emph{better scale} at $\mu$, which is in turn a consequence of $\square^*_\mu$. It is a theorem of Shelah \cite{MR2369124} that if $\mathrm{SCH}$ fails, then the least ordinal where it fails (whose cofinality is necessarily $\omega$ by a theorem of Silver) carries a better scale. On the other hand, Sakai and Velickovic in \cite{MR3284479} show that the Semistataionary Reflection principle implies there is no better scale, extending the theorem of Todor\v{c}evi\'{c} \cite{MR1261218} that $\mathrm{RC}$ implies $S\mathrm{CH}$. 

The fact that for singular $\mu$ of countable cofinality, $\neg \mathrm{ADS}_\mu$ follows from $\mathrm{RC}$ is known: Torres-Pérez and Todor\v{c}evi\'{c} (\cite{MR2965421}, the proof of Theorem 3.1) showed this via an equivalent form of $\mathrm{RC}$ characterized in \cite{MR686495}; Fuchino, Juh\'{a}sz, Soukup, Szentmikl\'{o}ssy, and Usuba \cite{MR2610450} showed this via an intermediate principle called the \emph{Fodor-type Reflection Principle} or FRP (Fuchino \cite{Fuchino} showed this principle is indeed a consequence of $\mathrm{RC}^B$).
We present an alternative proof of $\neg \mathrm{ADS}_\mu$ directly from the tree formulation of $\mathrm{RC}^B$ using the same ideas as those in Theorem \ref{ORefl} and \ref{Doebler}.

\begin{prop}
$\mathrm{RC}^B$ implies $\neg \mathrm{ADS}_\mu$ for all singular cardinal $\mu$ with $cf(\mu)=\omega$.
\end{prop}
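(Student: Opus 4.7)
The plan is to mirror the tree-based arguments of Theorems~\ref{ORefl} and~\ref{Doebler}. Assume for contradiction that $\mathrm{ADS}_\mu$ holds, witnessed by $\langle A_\alpha : \alpha<\mu^+\rangle$, with each $A_\alpha\subseteq\mu$ cofinal of order type $\omega$, and fix $\langle \mu_n : n<\omega\rangle$ increasing and cofinal in $\mu$. The key auxiliary notion is an ADS-level on countable subsets of $\mu^+$: for each $x\in[\mu^+]^\omega$ with $x\cap\omega_1\in\omega_1$, let $n(x)$ be the least $n<\omega$ for which there is $F:x\to\mu_n$ with $\{A_\alpha\setminus F(\alpha):\alpha\in x\}$ pairwise disjoint. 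This is well-defined by ADS together with $\mathrm{cf}(\mu)=\omega$, since a disjointifier on a countable set always has bounded range in $\mu$. Because a stationary subset of $[\mu^+]^\omega$ cannot be a countable union of non-stationary sets, some $n^*<\omega$ makes $S:=\{x:n(x)=n^*,\ x\cap\omega_1\in\omega_1\}$ stationary.

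Next, form the tree $T(S)$ of countable continuous $\lesssim$-increasing sequences from $S$, ordered by end-extension, exactly as in Theorem~\ref{Doebler}; stationarity of $S$ makes $T(S)$ Baire. Applying $\mathrm{RC}^B$ produces a nonspecial subtree $T'\subseteq T(S)$ of size $\aleph_1$. Following the minimization device of Theorem~\ref{ORefl}, I choose $T'$ so that $W:=\bigcup_{t\in T'}\max(t)$ has $\sup(W)$ of the least possible cofinality; the combination of Pressing Down on $T'$ with the closure of special trees under countable unions then forces $\mathrm{cf}(\sup W)=\omega_1$, hence $|W|=\aleph_1$. Invoking ADS at any $\beta>\sup W$ yields a disjointifier $F:W\to\mu$, and since $|W|=\aleph_1>\omega=\mathrm{cf}(\mu)$, pigeonhole on the $\omega$-cofinal sequence produces $m<\omega$ and $W^*\subseteq W$ of size $\aleph_1$ with $F(W^*)\subseteq\mu_m$.

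To close the argument, I apply the Pressing Down Lemma for trees (Theorem~\ref{PressingDownTree}) inside $T'$, using the regressive map sending each limit-height $t$ to the minimal predecessor $s$ with $\max(t)\setminus W^*\subseteq\max(s)$; this yields a nonspecial $T''\subseteq T'$ on which $\max(t)\subseteq W^*\cup c_0$ for some fixed countable $c_0$. Combining $F\restriction W^*$ with a fresh disjointifier on $c_0$ then produces, for each $t\in T''$, a disjointifier of $\max(t)$ bounded by $\mu_{\max\{m,\,n(c_0)\}}$, so $n(\max(t))\leq \max\{m,\,n(c_0)\}$. The main obstacle is that the rigid value $n(\max(t))=n^*$ coming from $\max(t)\in S$ does not automatically contradict this upper bound, since $n^*$ is chosen before $m$ is known. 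The plan to overcome this is to couple the two pigeonholes by iteration: at each round, refine the level function via finer invariants of the ADS-disjointifier (for instance, which block $[\mu_{n-1},\mu_n)$ absorbs $F(W^*)$) and re-apply the tree construction to successively thinned stationary sets, so that the chosen $n^*$ in a later round can be arranged above the bound produced by the previous round. Termination in finitely many rounds, forced by the $\omega$-valued nature of $n$, delivers the contradiction, in the same spirit as Theorems~\ref{ORefl} and~\ref{Doebler} but with an additional layer of bookkeeping imposed by the PCF structure behind ADS.
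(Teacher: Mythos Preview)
Your proposal does not reach a contradiction, and the vague iterative scheme you sketch at the end is not a proof. There are two concrete problems.

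First, the regressive map in your penultimate step is not well-defined. For $t$ of limit height one has $\max(t)=\bigcup_{s<_T t}\max(s)$, but a countable subset of this union need not lie inside a single $\max(s)$; indeed, since the $\lesssim$-order forces $\sup\max(s)<\sup\max(t)$ for every $s<_T t$, whenever $\max(t)\setminus W^*$ is cofinal in $\max(t)$ (and there is no reason it should not be) no predecessor $s$ satisfies $\max(t)\setminus W^*\subseteq\max(s)$. So the Pressing Down step collapses.

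Second, and more fundamentally, your stationary set $S=\{x:n(x)=n^*\}$ carries no structural information about the $\mathrm{ADS}$-sequence beyond a single integer, and your own analysis shows this is not enough: the bound $n(\max(t))\leq\max\{m,n(c_0)\}$ need not conflict with $n(\max(t))=n^*$. The ``iteration'' you propose to salvage this is not specified with enough precision to evaluate, and in particular it is not clear why later rounds would force $n^*$ above the bound rather than below it.

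The paper's proof avoids all of this by choosing a different stationary set, namely $S=\{x\in[\mu^+]^\omega: A_{\sup(x)}\subset x\}$ (this is stationary by a result of Cummings--Foreman--Magidor). With this $S$, each limit node $t$ in the tree satisfies $A_{\sup(\max(t))}\subset\max(t)=\bigcup_{s<t}\max(s)$. One then takes a disjointifier $F$ on $W=\{\sup(\max(t)):t\in T'\}$ and defines the regressive map to send $t$ to the $<_T$-least $s$ for which some element of the tail $A_{\sup(\max(t))}\setminus F(\sup(\max(t)))$ lies in $\max(s)$; this is genuinely regressive because that tail is contained in $\max(t)$. Pressing Down plus a further pigeonhole on the witnessing element $\alpha$ gives a nonspecial $T''$ on which $\alpha\in A_{\sup(\max(t))}\setminus F(\sup(\max(t)))$ for all $t\in T''$, and the pairwise disjointness of the tails forces $\sup(\max(t))$ to be constant on $T''$, making $T''$ an antichain---a contradiction. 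The point is that the right stationary set hard-codes $A_{\sup(x)}\subset x$, which makes the regressive function both well-defined and immediately decisive; no minimization of $\sup W$ and no iteration is needed.
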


\begin{proof}
Suppose for the sake of contradiction that there exists $\langle A_i: i<\mu^+\rangle$ that witnesses $\mathrm{ADS}_\mu$. We may assume that each set in the sequence has order type $\omega$. Notice that $S=\{x\in [\mu^+]^\omega: A_{\sup (x)} \subset x\}$ is stationary. See the proof of Theorem 4.2 in \cite{MR1838355}, page 52. We may without loss of generality assume that for any $x\in S$, $x\cap \omega_1\in \omega_1$.
Define $T(S)$ as the tree for shooting a continuous $\omega_1$-sequence through $S$. More precisely, $t\in T(S)$ iff there exists $\gamma<\omega_1$ and $t: \gamma+1\to S$ that is continuous increasing such that for any $\alpha<\beta\leq \gamma$, $\sup (t(\alpha))<\sup (t(\beta))$ and $t(\alpha)\cap \omega_1<t(\beta)\cap \omega_1$. The order in $T(S)$ is end extension. For each $t\in T(S)$ with domain $\gamma+1$, let $\max (t) $ be $t(\gamma)$.
\begin{claim}
$T(S)$ is Baire.
\end{claim}
\begin{proof}
Fix a countable collection of dense open subsets of $T(S)$, say $\{D_i: i\in\omega\}$.
Let $\lambda$ be a large enough regular cardinal and $M\prec H(\lambda)$ be countable and contain relevant objects such that $M\cap \mu^+\in S$. Now build a sequence $\langle t_i\in T(S)\cap M: i\in \omega\rangle$ such that
\begin{itemize}
\item  $t_{2i}\in D_i$ for all $i\in \omega$,
\item $t_i \leq_{T(S)} t_{i+1}$ for all $i\in \omega$,
\item $\langle \mathrm{dom}(t_i): i\in \omega\rangle$ is cofinal in $M\cap \omega_1$,
\item $\bigcup_{i\in \omega} \max (t_i)=M\cap \mu^+$.
\end{itemize}  
It is easy to see that $\bigcup_{i\in \omega} t_i \cup \{\langle M\cap \omega_1, M\cap \mu^+\rangle\}$ is in $\bigcap_{i\in \omega} D_i$.
\end{proof}

Apply $\mathrm{RC}^B$ to $T(S)$ and pick a nonspecial subtree $T'\subset  T(S)$ of size $\aleph_1$.
Let $W=\{\sup(\max (t)): t\in T'\}$. By the almost-disjointness, there exists $F: W\to \mu$ such that $\langle A_i\backslash F(i): i\in W\rangle$ is pairwise disjoint. 

Define a function $f: T'\to T'$ such that for each $t\in T'$,
\begin{enumerate}
\item if $t$ is of successor height, then $f(t)$ is its immediate predecessor in $T'$,
\item if $t$ is of limit height $f(t)$ is the $<_{T'}$-least $s\leq_{T'} t$ such that there exists $\alpha\in \max(s)$ such that $\alpha\in A_{\sup (\max (t))}\backslash F(\sup (\max (t)))$. 
\end{enumerate}

Note that $f$ is a regressive function: for each $t\in T'$ of limit height, by continuity, $\max (t)=\bigcup_{s<_{T'} t} \max(s)$. Since $\max (t)\in S$, we know that $A_{\sup(\max(t))}\subset \max(t)$. Therefore, if $\alpha$ is the least element in $A_{\sup (\max (t))}\backslash F(\sup (\max (t)))\subset \max(t)$, there will be some $s<_{T'} t$ such that $\alpha\in \max(s)$.

By the Pressing Down Lemma for trees and the countable completeness of nonspecial trees, we can find a nonspecial subtree $T''\subset T'$ and $\alpha\in \mu^+$ such that for each $t\in T''$, $\alpha\in A_{\sup (\max (t))}\backslash F(\sup (\max (t)))$. By the fact that $\langle A_i\backslash F(i): i\in W\rangle$ is pairwise disjoint, we know that for any $t,t'\in T''$, $\sup(\max(t))=\sup(\max(t'))$. This means that $T''$ is an antichain, hence special, which is a contradiction.
\end{proof}

\begin{remark}
Cummings, Foreman and Magidor (\cite{MR1838355}, Theorem 4.2) showed that $\mathrm{WRP}^*([\mu^+]^\omega)$ implies the failure of $\mathrm{ADS}_{\mu}$, where $\mathrm{WRP}^*([\mu^+]^\omega)$ is the same as $\mathrm{WRP}([\mu^+]^\omega)$ (see Definition \ref{WRP}) except that in addition we require that the reflected set $W\in [\lambda]^{\omega_1}$ to satisfy that $\mathrm{cf}(\sup(W))=\omega_1$. Recall the discussion after Theorem \ref{MitchellRC} that in general $\mathrm{RC}$ does not imply $\mathrm{WRP}$.
\end{remark}

\begin{remark}
It follows from the work of Foreman and Magidor (\cite{MR1450520}, Section 5, the Claim in page 16 and the discussion preceding it) that $\mathrm{RC}$ is compatible with the Approachability Property at $\mu$ where $\mu$ is a singular cardinal of countable cofinality.
\end{remark}

The last part of this section is dedicated to discussing the interaction between $\mathrm{RC}$ and certain weak square principles.

\begin{definition}
For any set of ordinals $C$, define $\lim (C)$ to be the collection of accumulation points in $C$. Namely, $\lim (C)$ contains all $\gamma$ such that $\gamma=\sup (C\cap \gamma)$.
\end{definition}

\begin{definition}
Let $\lambda$ be a regular cardinal and $\kappa$ be a cardinal. $\square(\lambda,\kappa)$ asserts the existence of a sequence $\langle \mathcal{C}_\alpha: \alpha\in \lim (\lambda)\rangle$ such that 
\begin{itemize}
\item $\mathcal{C}_\alpha$ is a non-empty $\leq \kappa$-collection of clubs in $\alpha$ for each $\alpha\in \lim (\lambda)$,
\item for each $\alpha<\beta \in \lim (\lambda)$ and $C\in \mathcal{C}_\beta$, if $\alpha\in \lim (C)$, then $C\cap \alpha\in \mathcal{C}_\alpha$,
\item there does not exist a \emph{thread}, namely a club $D\subset \lambda$ such that for any $\alpha\in \lim (D)$, $D\cap\alpha\in \mathcal{C}_\alpha$.
\end{itemize}
\end{definition}

We will show that in general $\mathrm{RC}$ is compatible with $\square(\lambda,\omega_2)$ for any given regular $\lambda\geq \omega_3$. For uncountable regular cardinals $\lambda, \kappa$ such that $\lambda \geq \kappa^+$, we will define a $\kappa$-directed closed forcing which adds a $\square(\lambda,\kappa)$-sequence.

\begin{definition}
Let $\mathbb{P}=\mathbb{P}_{\square(\lambda,\kappa)}$ be the poset consisting of functions $t$ where
 $t$ is a function of domain $(\gamma+1)\times \kappa$ for some $\gamma=\gamma_t<\lambda$ such that for all $\beta\leq \gamma$ and $i<\kappa$, $t(\beta, i)$ is a club in $\beta$ and for any $\alpha<\beta \in \lim (\lambda) \cap \gamma+1$, $i<\kappa$ and $a = t(\beta,i)$, if $\alpha\in \lim (a)$, then $a\cap \alpha\in \bigcup_{j<\kappa} \{t(\alpha,j)\}$. We define $t'\leq_\mathbb{P} t$ iff $t'$ end extends $t$ and there exists $\eta<\kappa$ such that for all $\nu>\eta$, $t(\gamma_t, \nu)= t'(\gamma_{t'}, \nu)\cap \gamma_{t}$.

\end{definition}

Let us collect some standard facts about this forcing.

\begin{fact} 
The following hold:
\begin{enumerate}
\item $\mathbb{P}$ is $\kappa$-directed closed,
\item $\mathbb{P}$ is $\lambda$-strategically closed,
\item forcing with $\mathbb{P}$ adds a $\square(\lambda,\kappa)$-sequence.
\end{enumerate}
\end{fact}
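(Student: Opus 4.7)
The plan is to verify the three clauses in turn, following the standard template for generic square forcings; throughout I treat $\kappa$ as regular (as in the intended application $\kappa=\omega_2$).

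For (1), given a directed family $\{t_\alpha:\alpha<\delta\}$ with $\delta<\kappa$, I would first use directedness together with the regularity of $\kappa$ to extract a cofinal $\leq_\mathbb{P}$-descending chain. If the chain has a minimum there is nothing to do, so set $\gamma^*=\sup_\alpha \gamma_{t_\alpha}$. The end-extension part of the ordering canonically glues the $t_\alpha$ into a partial function $t^*$ on $\gamma^*\times\kappa$, and only the top level $\gamma^*$ needs attention. For each pair $\alpha<\alpha'$ in the chain the ordering yields some $\eta_{\alpha,\alpha'}<\kappa$; regularity gives $\eta^*=\sup_{\alpha<\alpha'}\eta_{\alpha,\alpha'}<\kappa$. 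For $\nu>\eta^*$ the sequence $\langle t_\alpha(\gamma_{t_\alpha},\nu)\rangle_\alpha$ is end-extending, so I would define $t^*(\gamma^*,\nu)=\bigcup_\alpha t_\alpha(\gamma_{t_\alpha},\nu)$, which is routinely seen to be a club in $\gamma^*$, and the $\square$-coherence clause at $\gamma^*$ is inherited from the $t_\alpha$'s. For $\nu\leq\eta^*$ I would copy $t^*(\gamma^*,\eta^*+1)$ into every low-index slot, which keeps the coherence with the lower levels intact because the copied value already satisfies it. Then $t^*\leq t_\alpha$ with witness $\eta^*$ for every $\alpha$.

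For (2), the strategy for player II is the standard one: at every one of her moves she extends by a condition whose top ordinal is a successor, and she records the $\eta$-witness arising from the definition of $\leq_\mathbb{P}$. At a limit stage $\beta<\lambda$ of the game, the recorded $\eta$-witnesses form a bounded subset of $\kappa$ (by regularity and the fact that $\beta<\lambda$), so the union construction from (1) supplies a canonical lower bound for her to play. This is essentially the bookkeeping used in Example 6.5 of Cummings's handbook chapter.

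For (3), let $G\subseteq\mathbb{P}$ be generic and set $\mathcal{C}_\alpha=\{t(\alpha,i):t\in G,\,\gamma_t\geq\alpha,\,i<\kappa\}$ for $\alpha\in\lim(\lambda)$. Density together with $\lambda$-strategic closure ensures that for each $\alpha<\lambda$ some $t\in G$ has $\gamma_t\geq\alpha$, so $|\mathcal{C}_\alpha|\leq\kappa$, and coherence of $\langle\mathcal{C}_\alpha\rangle$ is built directly into the definition of $\mathbb{P}$. For the no-thread clause, suppose $\dot D$ names a club threading the generic sequence. Starting from any $p$ and using $\lambda$-strategic closure, I would construct a descending chain $\langle p_\xi:\xi<\theta\rangle$ deciding $\dot D$ on initial segments of increasing length, while exploiting the freedom the ordering permits at indices below $\eta$ (the values $t'(\gamma_{t'},\nu)$ for $\nu\leq\eta$ are not constrained by the extension) to diagonalize: the canonical lower bound $p^*$ is arranged so that $p^*(\gamma_{p^*},\nu)\neq D\cap\gamma_{p^*}$ for every $\nu<\kappa$, contradicting that $D$ threads the sequence at $\gamma_{p^*}$. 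The main obstacle is this no-thread clause; strategic closure alone does not prevent threads, and one really must use the slack at the low indices to rule out all $\kappa$-many candidate threading values simultaneously.
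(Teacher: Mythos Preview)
Your treatments of (1) and (2) are correct and essentially identical to the paper's: for (1) you take the supremum of the $\eta$-witnesses and glue at the top, copying a high-index value into the low slots; for (2) you defer to the standard bookkeeping argument, just as the paper does (it cites Lemma~6.1 of \cite{MR1838355}).

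For (3), however, there is a genuine gap. You correctly note that the values $t'(\gamma_{t'},\nu)$ for $\nu\le\eta^*$ are unconstrained, and you propose to ``diagonalize'' using that freedom. The problem is that the values $t'(\gamma_{t'},\nu)$ for $\nu>\eta^*$ are \emph{not} free: they are forced to be $\bigcup_i t_i(\gamma_{t_i},\nu)$ by the requirement that $t'\le_{\mathbb{P}} t_i$ for every $i$. Nothing in your construction prevents one of these forced high-index clubs from equaling $a=D\cap\gamma_{t'}$, and if that happens you cannot fix it just by adjusting the low indices.

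The paper's solution is to impose an extra requirement during the descending $\omega$-chain: at each stage one arranges that the top-level values $t_{i+1}(\gamma_{t_{i+1}},\alpha)$ are pairwise distinct as $\alpha$ ranges over $\kappa$. This propagates to the limit, so the high-index values $t'(\delta,\nu)$ for $\nu>\eta^*$ are pairwise distinct (their restrictions to any fixed $\gamma_{t_i}$ already are). Hence at most one index $\nu_0$ can satisfy $t'(\delta,\nu_0)=a$. If such a $\nu_0$ exists, one simply overwrites $t'(\delta,\nu_0)$ with any other available club and raises the witness to $\nu_0$; this still yields $t'\le_{\mathbb{P}} t_i$ for every $i$, and now $t'(\delta,\beta)\ne a$ for all $\beta<\kappa$. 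That is the missing idea you need to complete the no-thread argument.
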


\begin{proof}

\begin{enumerate}

\item To see the poset is $\kappa$-directed closed, fix any directed collection $\langle t_i\in \mathbb{P}: i<\beta\rangle$ with $\beta<\kappa$. Since the ordering is end-extension, the collection must be linearly ordered. Let $t'=\bigcup_{i<\beta} t_i$. If this is a condition, then we are done. Otherwise, let $\mathrm{dom}(t')=\gamma=\sup_{i<\beta}(\gamma_{t_i})$. We need to extend $t'$ to $t$ whose domain is $(\gamma+1)\times \kappa$. Clearly, we only need to specify the values of $t$ on $\{\gamma\}\times \kappa$. For each $i<\beta$, there exists $j_i\in \kappa$ such that for all $k>j_i$, for all $l<i$, $t_l(\gamma_{t_l}, k)=t_i(\gamma_{t_i}, k)\cap \gamma_{t_l}$. Let $\nu =\sup_{i<\beta} (j_i)$. For each $\nu'>\nu$, let $t(\gamma, \nu')=\bigcup_{i<\beta} t_{i}(\gamma_{t_i}, \nu')$ and for each $\mu\leq \nu$, let $t(\gamma, \mu)=t(\gamma,\nu+1)$ . It is easy to see that $t$ as defined is a desired lower bound for $\langle t_i\in \mathbb{P}: i<\beta<\kappa\rangle$.

\item Since the proof is standard, we direct the the reader to the proof of Lemma 6.1 in \cite{MR1838355} adapted suitably in the context of $\mathbb{P}$. 

\item We only need to verify that that there does not exist a thread for the generic sequence in the forcing extension. Suppose for the sake of contradiction that for some $\mathbb{P}$-name $\dot{C}$ for a club subset of $\lambda$ and $t\in \mathbb{P}$, $t\Vdash ``\dot{C}$ is a thread ''. 
Since $\mathbb{P}$ is $<\lambda$-distributive, we can recursively build $\langle t_i\in \mathbb{P}: i\in \omega\rangle$ and $\langle a_i\in [\lambda]^{<\lambda}: i\in \omega\rangle$ such that 
\begin{itemize}
\item $t_0=t$.
\item for any $i\in \omega$, $t_{i+1}\leq_{\mathbb{P}} t_i$, $a_i$ is a proper initial segment of $a_{i+1}$ and $a_i$ is a closed subset of $\lambda$,
\item for any $i\geq 1$, $\gamma_{t_{i}}>\max (a_{i})>\gamma_{t_{i-1}}$,
\item for any $i\in \omega$, $t_{i+1}\Vdash ``a_i\sqsubset \dot{C}$ and $\max (a_i)\in \lim (\dot{C})$'' and for any $\alpha<\alpha'\in \kappa$, $t_{i+1}(\gamma_{t_{i+1}}, \alpha)\neq t_{i+1}(\gamma_{t_{i+1}}, \alpha')$.
\end{itemize}
Let $\delta=\sup_{i\in \omega} (\max (a_i))=\sup_{i\in \omega}(\gamma_{t_i})$ and $a=\bigcup_{i\in \omega} a_i$. Repeat the argument in the proof of (1) to get $t'$ such that $\gamma_{t'}=\delta$, which is a lower bound for $\langle t_i: i\in \omega\rangle$. By the property that for any $i\in \omega$ and $\alpha<\alpha'\in \kappa$, $t_{i+1}(\gamma_{t_{i+1}},\alpha)\neq t_{i+1}(\gamma_{t_{i+1}},\alpha')$, we can further ensure that $t'(\delta, \beta)\neq a$ for any $\beta<\kappa$. Since $t'\Vdash ``a_i\sqsubset \dot{C}$'' for all $i\in \omega$, $t'\Vdash ``a\sqsubset \dot{C}$ and $\delta\in \lim (\dot{C})$'', 
it follows that $a=t'(\delta, \beta)$ for some $\beta<\kappa$, which is a contradiction.
\end{enumerate}

\end{proof}

Suppose $\kappa$ is a supercompact cardinal. Just like the first paragraph in the proof of Theorem \ref{NotSimul}, in $V^{\mathrm{Coll}(\omega_1, <\kappa)}$, we know that $\mathrm{RC}$ is indestructible under $\omega_2$-directed closed forcings.
 If we follow by forcing with $\mathbb{P}_{\square(\lambda, \kappa)}$ in $V^{\mathrm{Coll}(\omega_1, <\kappa)}$, then we will have $\mathrm{RC}$ along with $\square(\lambda, \omega_2)$ in the final model. 
\begin{remark}
As mentioned in the introduction, $\mathrm{RC}$ is known to refute $\square(\lambda,\omega)$ for regular $\lambda\geq \omega_2$ (\cite{MR3600760}). By Theorem 1.4.2 in \cite{Weiss} that $\square(\lambda,\omega_1)$ refutes the $(\lambda,\omega_2)$-strong tree property and by Theorem 3.1 in \cite{MR3600760} that $\mathrm{RC} + \neg \mathrm{CH}$ implies the $(\lambda,\omega_2)$-strong tree property for all $\lambda\geq \omega_2$, it follows that $\mathrm{RC} + \neg \mathrm{CH}$ implies the failure of $\neg \square(\lambda,\omega_1)$ for all regular $\lambda\geq \omega_2$. Notice that $\square(\omega_2, \omega_1)$ is a just a consequence of $\mathrm{CH}$.
\end{remark}

\begin{question}
Is $\mathrm{RC}+\mathrm{CH} + \square(\lambda, \omega_1)$ consistent for regular $\lambda>\omega_2$?
\end{question}

The following observation imposes some restriction on models of $\mathrm{RC}+\mathrm{CH}+\square(\lambda, \omega_1)$, if there are any.

\begin{definition}
We say a cardinal $\kappa$ is \emph{generically strongly compact} via a class of forcings $\mathcal{P}$ if: for any $\lambda\geq \kappa$, there exists $\mathbb{P}\in \mathcal{P}$ such that for any generic $G\subset \mathbb{P}$ over $V$, in $V[G]$, there exist an elementary embedding $j: V\to M$ such that $\mathrm{crit}(j)=\kappa$ and some $Y\in M$ such that $j''\lambda \subset  Y$ with $M\models |Y|<j(\kappa)$.
\end{definition}

\begin{observation}
If $\omega_2$ is generically strongly compact via the class of proper forcings, then $\square(\lambda,\omega_1)$ fails for all regular $\lambda > \omega_2$.
\end{observation}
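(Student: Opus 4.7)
The plan is to derive a contradiction. Suppose $\vec{\mathcal{C}} = \langle \mathcal{C}_\alpha : \alpha \in \lim(\lambda) \rangle$ witnesses $\square(\lambda, \omega_1)$ in $V$ for some regular $\lambda > \omega_2$. I would use the generic embedding supplied by the hypothesis to produce a thread through $\vec{\mathcal{C}}$.

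First, apply the hypothesis at $\lambda$: there is a proper forcing $\mathbb{P}$ such that in $V[G]$ one has $j : V \to M$ with $\mathrm{crit}(j) = \omega_2$ and some $Y \in M$ with $j''\lambda \subseteq Y$ and $|Y|^M < j(\omega_2)$. Set $\beta = \sup j''\lambda$. From $|j''\lambda|^M \le |Y|^M < j(\omega_2) \le j(\lambda)$ and the $M$-regularity of $j(\lambda)$, $\beta < j(\lambda)$. Since $\omega_1 < \mathrm{crit}(j)$, $j(\omega_1) = \omega_1$, so $j(\vec{\mathcal{C}})$ is a $\square(j(\lambda), \omega_1)$-sequence in $M$; in particular the family $\mathcal{C}^j_\beta := j(\vec{\mathcal{C}})_\beta$ is nonempty and of size $\le \omega_1$. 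Fix some $D \in \mathcal{C}^j_\beta$.

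Next, I would pull back $D$ to a thread. Since $j$ is continuous at every $V$-ordinal of cofinality $< \omega_2$, the set $\lim(j''\lambda)$ is a club in $\beta$ in $V[G]$; combined with the club $\lim(D)$, a standard argument shows that $A := \{\alpha \in \lim(\lambda) : \mathrm{cf}^V(\alpha) < \omega_2 \wedge j(\alpha) \in \lim(D)\}$ is cofinal in $\lambda$. For each $\alpha \in A$, coherence of $j(\vec{\mathcal{C}})$ gives $D \cap j(\alpha) \in j(\mathcal{C}_\alpha)$, and since $|\mathcal{C}_\alpha| \le \omega_1 < \mathrm{crit}(j)$ we have $j(\mathcal{C}_\alpha) = j''\mathcal{C}_\alpha$; hence there is a unique $C_\alpha \in \mathcal{C}_\alpha$ with $j(C_\alpha) = D \cap j(\alpha)$. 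For $\alpha < \alpha'$ both in $A$, applying $j^{-1}$ to the identity $j(C_{\alpha'} \cap \alpha) = j(C_{\alpha'}) \cap j(\alpha) = D \cap j(\alpha) = j(C_\alpha)$ gives $C_\alpha = C_{\alpha'} \cap \alpha$. Therefore $E := \bigcup_{\alpha \in A} C_\alpha$ is a club subset of $\lambda$ satisfying $E \cap \gamma \in \mathcal{C}_\gamma$ for every $\gamma \in \lim(E)$, i.e., a thread for $\vec{\mathcal{C}}$ in $V[G]$.

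The main obstacle is to conclude that $E$ lives in $V$, yielding a $V$-thread contradicting $\square(\lambda, \omega_1)$. The key observation is that every $\lambda$-bounded initial segment $E \cap \gamma$ already belongs to $V$: when $\gamma \in \lim(E)$, $E \cap \gamma = C_\gamma \in \mathcal{C}_\gamma \subseteq V$, and otherwise $E \cap \gamma$ differs from such a $\mathcal{C}_\gamma$-member by at most a finite set. Equivalently, the coherent family $\{C_\alpha : \alpha \in A\}$ determines a cofinal branch through the $V$-tree $T^* = \{C \in V : \exists \alpha\ C \in \mathcal{C}_\alpha,\ \forall \delta \in \lim(C)\ C \cap \delta \in \mathcal{C}_\delta\}$ of height $\lambda > \omega_2$ and levels of size $\le \omega_1$. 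The heart of the proof is therefore a no-new-branch / approximation argument, tailored to the proper forcing $\mathbb{P}$ witnessing generic strong compactness: one must show that $\mathbb{P}$ cannot add a cofinal branch to this narrow $V$-tree, so that the branch determined by $E$ is already in $V$. Granted this, the resulting $V$-thread contradicts $\square(\lambda, \omega_1)$, completing the proof.
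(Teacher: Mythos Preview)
Your approach is essentially the same as the paper's: construct a thread in $V[G]$ by pulling back a club from $j(\vec{\mathcal{C}})_\beta$ along the generic embedding, and then argue that proper forcing cannot have added it. Your construction of the thread is in fact slightly cleaner than the paper's: you use $|\mathcal{C}_\alpha| \le \omega_1 < \mathrm{crit}(j)$ to get $j(\mathcal{C}_\alpha) = j''\mathcal{C}_\alpha$ and hence a canonical $C_\alpha \in \mathcal{C}_\alpha$ with $j(C_\alpha) = D \cap j(\alpha)$, whereas the paper fixes an enumeration $\langle C_{\alpha,i} : i < \omega_1\rangle$ of each $\mathcal{C}_\alpha$, obtains indices $i_\alpha$, and then applies pigeonhole over the unbounded set $A$ to stabilize the index before forming the union. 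Both work, but your version avoids the extra step.

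Where you leave a genuine gap, the paper simply cites a known result: it invokes the fact (Corollary~2.22 of Hayut and Lambie-Hanson, \emph{Simultaneous stationary reflection and square sequences}) that no proper forcing can add a thread to a $\square(\lambda,\omega_1)$-sequence, and derives the contradiction directly from the existence of the thread in $V[G]$. Your framing of this step as a ``no new branch through a narrow tree'' argument is a bit misleading: proper forcings \emph{can} add cofinal branches to tall trees with $\aleph_1$-sized levels in general, so the argument is not a generic tree lemma but one that genuinely uses the coherence of the square sequence (roughly, for a master condition over a suitable countable model $N$, the thread restricted to $\sup(N \cap \lambda)$ is forced to lie in $\mathcal{C}_{\sup(N\cap\lambda)}$, and one leverages this together with properness). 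So your outline is correct, but you should either supply that argument or, as the paper does, cite it.
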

\begin{proof}
Suppose otherwise for the sake of contradiction. Let $\bar{C}=\langle C_{\alpha,i}: i<\omega_1, \alpha\in \lim (\lambda)\rangle$ be a $\square(\lambda,\omega_1)$-sequence in $V$ for some regular $\lambda>\omega_2$. Let $\mathbb{P}$ be a proper forcing such that whenever $G\subset \mathbb{P}$ is generic, in $V[G]$, we can find an elementary embedding $j: V\to M$ such that $\mathrm{crit}(j)=\omega_2$ and some $Y\in M$ such that $j''\lambda\subset Y$ with $M\models |Y|<j(\omega_2)$. Let $\gamma=\sup (j''\lambda)$. We may assume $Y\subset \gamma$. In $M$, let $C\in j(\bar{C})(\gamma)$. So $C\subset \gamma$ is a club. Consider $A=j^{-1}((\lim (C)) \cap Y)$. $A$ is unbounded in $\lambda$ since $(\lim (C)) \cap Y \supset (\lim (C))\cap j''\lambda$ and the latter is an $\omega$-club. For each $\alpha\in A$, as $j(\alpha)\in (\lim (C))\cap Y$, by coherence there exists $i_\alpha<\omega_1$ such that $C\cap j(\alpha)= j(C_{\alpha,i_\alpha})$. Find $A'\subset A$ unbounded in $\lambda$ and $i<\omega_1$ such that for all $\alpha\in A'$, $i_\alpha=i$. Then $\bigcup_{\alpha\in A'} C_{\alpha, i}$ threads $\bar{C}$ in $V[G]$, since for any $\alpha<\beta\in A'$,  $j(C_{\beta, i})\cap j(\alpha)=(C\cap j(\beta))\cap j(\alpha)=C\cap j(\alpha)=j(C_{\alpha,i})$ so by elementarity $C_{\beta,i}\cap \alpha=C_{\alpha,i}$. However, by Corollary 2.22 in \cite{MR3730566}, no proper forcing can introduce a thread to $\bar{C}$. This is a contradiction.
\end{proof}

\section{Rado's Conjecture and polarized partition relations}\label{polarized}
In this section, we investigate the relationship between $\mathrm{RC}$ and certain polarized partition relations concerning $\omega_1$ and $\omega_2$.

Given an ideal $I$ on $X$, we can consider the following equivalence relation: $A\sim B$ iff $A\Delta B\in I$. Then $P(X)/I$ is the poset that consists of all $\sim$-equivalent classes, where $p\leq q$ iff there exists $A\in p, B\in q$ such that $A\subset_I B$, namely $A-B\in I$.

\begin{definition}
Let $I$ be an ideal on $\omega_1$. Then 
\begin{itemize}
\item $I$ is \emph{precipitous} if whenever $U\subset P(\omega_1)/I$ is a generic ultrafilter, then $\mathrm{Ult}(V, U)$ is well-founded.
\item $I$ is \emph{presaturated} if $I$ is precipitous and forcing with $P(\omega_1)/I$ preserves $\omega_2^V$ as a cardinal.
\end{itemize}

\end{definition}

Todor\v{c}evi\'{c} in \cite{MR1127033} showed that Chang's Conjecture (CC) implies the polarized partition relation $\begin{pmatrix}
\omega_2 \\
\omega_1
\end{pmatrix} \to \begin{pmatrix}
\omega \\
\omega
\end{pmatrix}^{1,1}_{\omega}$. As $\mathrm{CC}$ is independent of the existence of a saturated ideal on $\omega_1$ in general (see for example Proposition 8.52 in \cite{MR2768692}), it is a natural question whether the same polarized partition relation follows from the existence of an ideal on $\omega_1$ with certain saturation property. It turns out that a fairly weak saturation property suffices to get the partition relation.

\begin{lemma}\label{presaturated}
If there exists a presaturated ideal $I$ on $\omega_1$, then  $\begin{pmatrix}
\omega_2 \\
\omega_1
\end{pmatrix} \to \begin{pmatrix}
\omega \\
\omega
\end{pmatrix}^{1,1}_{\omega}$.
\end{lemma}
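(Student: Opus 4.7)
The plan is to reduce via pigeonhole and then exploit the generic ultrapower furnished by $\mathbb{P} := P(\omega_1)/I$.

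First, apply pigeonhole. By $\aleph_1$-completeness of $I$, each slice $f_\alpha := f(\alpha, \cdot)$ admits some $n_\alpha \in \omega$ with $f_\alpha^{-1}(n_\alpha) \in I^+$. A pigeonhole on $\omega_2$ yields $n^* \in \omega$ and $A \in [\omega_2]^{\omega_2}$ such that $X_\alpha := f_\alpha^{-1}(n^*) \in I^+$ for every $\alpha \in A$.

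Next, force with $\mathbb{P}$: the $V$-generic ultrafilter $U = G$ induces an ultrapower embedding $j \colon V \to M$, wellfounded by precipitousness, with $j(\omega_1) = \omega_2^V$ and $\omega_2^V$ preserved by presaturation. Letting $\delta := [\mathrm{id}]_U \in M$, a standard ultrapower calculation gives $j(f)(j(\alpha), \delta) = n^*$ precisely when $X_\alpha \in U$. Using the $V$-$\aleph_1$-completeness of $U$ together with pigeonhole inside $V[G]$ (available because $\omega_2^V$ remains regular), one obtains a colour $m^* \in \omega$ and a set $A_{m^*} \in V[G]$ of size $\omega_2^V$ such that $Y_\alpha := f_\alpha^{-1}(m^*) \in U$ for every $\alpha \in A_{m^*}$.

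The key reduction is: if a ground-model $\omega$-sequence $\langle \alpha_i \rangle_{i < \omega}$ can be found so that $Y_{\alpha_i} \in U$ for every $i$ (equivalently, $\{\alpha_i\} \subset A_{m^*}$), then the $V$-$\aleph_1$-completeness of $U$ yields $\bigcap_i Y_{\alpha_i} \in U \subset I^+$. Since $I$ extends the ideal of countable sets, this intersection has cardinality $\omega_1$; hence any countably infinite $B \subset \bigcap_i Y_{\alpha_i}$, together with $A' := \{\alpha_i : i < \omega\}$, forms a monochromatic $\omega \times \omega$ rectangle for $f$ with colour $m^*$, witnessing the partition relation in $V$.

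The main obstacle is the production of this ground-model sequence. I plan to establish the density in $\mathbb{P}$ of the set of conditions $p$ such that, for some $m \in \omega$ and some $V$-sequence $\langle \alpha_i \rangle$, $p \cap \bigcap_i f_{\alpha_i}^{-1}(m) \in I^+$. The density is proved by a Ramsey-style fusion inside $V$: below an arbitrary condition $p$, one redoes the pigeonhole below $p$ to produce $n_p^*$ and $A_p$, and then inductively picks $\beta_0 < \beta_1 < \cdots$ in $\omega_1$ together with a decreasing chain $A_p = A^{(0)} \supset A^{(1)} \supset \cdots$ of subsets cofinal in $\omega_2^V$; at each step, pigeonhole on the map $\psi_n(\alpha) := \min(f_\alpha^{-1}(n_p^*) \cap p \cap (\beta_{n-1}, \omega_1))$ combined with the regularity of $\omega_2^V > \omega_1$ produces $\beta_n$ whose fibre $A^{(n)}$ is cofinal in $A^{(n-1)}$. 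The delicate point, where the presaturation identity $j(\omega_1) = \omega_2^V$ enters essentially, is ensuring that the fusion converges with $\bigcap_n A^{(n)}$ still cofinal in $\omega_2^V$. Any $\omega$-sequence $\langle \alpha_i \rangle$ drawn from $\bigcap_n A^{(n)}$, together with $B := \{\beta_n : n < \omega\}$, certifies that $p \cap \bigcap_i f_{\alpha_i}^{-1}(n_p^*) \in I^+$, establishing density. The generic filter meets this dense set, producing the desired $V$-sequence inside $A_{m^*}$ and completing the proof.
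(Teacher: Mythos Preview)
Your proposal has a genuine gap at exactly the point you flag as ``delicate.'' The fusion you describe produces a decreasing chain $A^{(0)}\supset A^{(1)}\supset\cdots$ of subsets of $\omega_2$, each of size $\aleph_2$, but nothing you have written prevents $\bigcap_n A^{(n)}$ from being empty. You say this is ``where the presaturation identity $j(\omega_1)=\omega_2^V$ enters essentially,'' but you never explain the mechanism, and I do not see one: the sets $A^{(n)}$ live on $\omega_2$, not $\omega_1$, so the ideal $I$ gives you no $\sigma$-completeness there, and mere regularity of $\omega_2$ is not enough to control a countable intersection. Note also a structural oddity: if your fusion \emph{did} succeed, it would already produce, entirely in $V$, an $\omega$-set $\{\alpha_i\}\subset\bigcap_n A^{(n)}$ and $B=\{\beta_n\}$ with $f$ constant on $\{\alpha_i\}\times B$ --- which is precisely the partition relation. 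The forcing, the set $A_{m^*}$, and the density argument would then be superfluous. That the heart of your proof, if it worked, would render the rest of the apparatus unnecessary is a sign that the argument is not yet coherent. A secondary issue: your ``certifies that $p\cap\bigcap_i f_{\alpha_i}^{-1}(n_p^*)\in I^+$'' is not justified either --- the only witnesses you exhibit are the countably many $\beta_n$.

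The paper's proof avoids this fusion entirely. It works in $V[G]$ from the start and exploits two facts you do not use: that $\omega_1^V$ becomes \emph{countable} there, and that ${}^\omega M\cap V[G]\subset M$. One fixes a color $k$ and an uncountable $A_0\subset j''\omega_2^V$ on which $j(f)(\cdot,\omega_1^V)\equiv k$, then recursively builds $\beta_n\in\omega_2^V$ and $\gamma_n\in\omega_1^V$: at each stage one reflects via elementarity to find, for each $j(\eta)$ in the current uncountable set, a witness $\alpha_\eta<\omega_1^V$; since $\omega_1^V$ is countable in $V[G]$, pigeonhole stabilizes $\alpha_\eta$ on an uncountable subset. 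The shrinking happens on the \emph{uncountable} side (inside $j''\omega_2^V$, which stays uncountable in $V[G]$), so there is no $\omega$-intersection problem. Finally the countable closure of $M$ in $V[G]$ lets one push the resulting $\omega$-sequences into $M$ and pull the conclusion back to $V$ by elementarity. The point is that presaturation is used not to rescue a ground-model fusion, but to guarantee that $\omega_2^V$ survives as an uncountable cardinal in $V[G]$ so that the recursive pigeonhole can be iterated there.
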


\begin{proof}
Let $f: \omega_2\times \omega_1\to \omega$ be the given coloring. Let $G$ be the generic ultrafilter for $P(\omega_1)/I$ over $V$. Let $j: V\to M\simeq Ult(V, G)$ be the associated elementary embedding defined in $V[G]$. We know by the assumption that $\mathrm{crit}(j)=\omega_1^V$, ${}^\omega
M\cap V[G]\subset M$ and $\omega_2^V=\omega_1^{V[G]}$ (see Proposition 4.8 in \cite{MR2768692} for more details). In $V[G]$, $A=_{def}j''\omega^V_2-\omega^V_1\subset M$ is uncountable. Therefore in $V[G]$ it is possible to find $k\in \omega$ and uncountable $A_0\subset A$ such that for all $j(\eta)\in A_0$, $j(f)(j(\eta), \omega_1^V)=k$. Pick $\beta_0\in \omega_2^V$ such that $j(\beta_0)=\min A_0$. For each $j(\eta)\in A_0$, in $M$, $\exists \xi<j(\omega_1^V) $ such that $ j(f)(j(\beta_0),\xi)=j(f)(j(\eta),\xi)=k$. By the elementarity of $j$, we know that in $V$, $\exists \alpha_\eta<\omega_1^V$, $f(\beta_0,\alpha_\eta)=f(\eta,\alpha_\eta)=k$. Since $\omega_1^V$ is countable in $V[G]$, by the Pigeon Hole Principle, we can find some $\gamma_0\in \omega^V_1$ and an uncountable $A_1\subset A_0$ such that for all $j(\eta)\in A_1$, $\alpha_\eta=\gamma_0$. 

Recursively, suppose for some $n\in \omega$, we have defined $\gamma_0<\cdots < \gamma_n \in \omega_1^V,  \beta_0<\cdots <\beta_n \in \omega_2^V$ and uncountable $A_{n+1}\subset \cdots \subset A_0$ such that $j(\beta_j)\in A_j$ for all $j\leq n$. Let $\beta_{n+1}$ be such that $j(\beta_{n+1})=\min A_{n+1}$. For each $j(\eta)\in A_{n+1}$, we know that $M\models ``\exists\alpha_{\eta}<j(\omega_1^V), \alpha_{\eta}>\gamma_n \ \& \  j(f)(j(\beta_j),\alpha_{\eta})=k=j(f)(j(\eta),\alpha_{\eta})$ for all $j\leq n+1$''. By the elementarity of $j$, it is true that for each $j(\eta)\in A_{n+1}$, we have $V\models ``\exists\alpha_{\eta}<\omega_1^V, \alpha_{\eta}>\gamma_n \ \& \  f(\beta_j,\alpha_{\eta})=k=f(\eta,\alpha_{\eta})$ for all $j\leq n+1$''. We can then define a map from $A_{n+1}$ to $\omega_1^V$ in $V[G]$, sending $j(\eta)$ to $\alpha_\eta$. Since $\omega_1^V$ is countable in $V[G]$, there exist $\gamma_{n+1}>\gamma_n$ and an uncountable $A_{n+2}\subset A_{n+1}$ such that for all $j(\eta)\in A_{n+2}$, $\alpha_\eta=\gamma_{n+1}$. Repeat the process above, we define two sequences $\langle \gamma_n\in \omega_1^V : n \in \omega\rangle$ and $\langle \beta_n\in \omega_2^V : n \in \omega\rangle$ in $V[G]$.

We have ensured that in $V[G]$, $j(f)\restriction \{j(\beta_n): n\in \omega\}\times\{\gamma_n: n\in \omega\}\equiv k$. Since ${}^\omega M\cap V[G]\subset M$, we know that $M \models ``$ there exist $C\in [j(\omega_2^V)]^\omega$ and $D\in [j(\omega_1^V)]^\omega$
such that $j(f)\restriction C\times D\equiv k$''. By the elementarity of $j$, we know that $V\models ``\exists C\in [\omega_2^V]^\omega, \exists D\in [\omega_1^V]^\omega$
such that $f\restriction C\times D\equiv k$''.

\end{proof}

\begin{remark}\label{finitecase}
Essentially the same proof as in Lemma \ref{presaturated} with straightforward modifications shows that the existence of a presaturated ideal on $\omega_1$ implies that $\begin{pmatrix}
\omega_2 \\
\omega_1
\end{pmatrix} \to \begin{pmatrix}
k \\
\omega_1
\end{pmatrix}^{1,1}_{\omega}$ for any $k\in \omega$.
\end{remark}

\begin{cor}
$\mathrm{RC}$ implies $\begin{pmatrix}
\omega_2 \\
\omega_1
\end{pmatrix} \to \begin{pmatrix}
\omega \\
\omega
\end{pmatrix}^{1,1}_{\omega}$ and 
$\begin{pmatrix}
\omega_2 \\
\omega_1
\end{pmatrix} \to \begin{pmatrix}
k \\
\omega_1
\end{pmatrix}^{1,1}_{\omega}$ for any $k\in \omega$.
\end{cor}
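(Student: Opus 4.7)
The plan is to deduce this as an immediate combination of two results already available in the excerpt. Feng's theorem, cited earlier in the introduction, asserts that $\mathrm{RC}$ implies the non-stationary ideal $\mathrm{NS}_{\omega_1}$ on $\omega_1$ is presaturated. On the other hand, Lemma \ref{presaturated} together with Remark \ref{finitecase} establishes that the existence of \emph{any} presaturated ideal on $\omega_1$ suffices to derive both polarized partition relations $\begin{pmatrix}\omega_2\\\omega_1\end{pmatrix}\to\begin{pmatrix}\omega\\\omega\end{pmatrix}^{1,1}_\omega$ and $\begin{pmatrix}\omega_2\\\omega_1\end{pmatrix}\to\begin{pmatrix}k\\\omega_1\end{pmatrix}^{1,1}_\omega$ for every $k\in\omega$.

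So the proof of the corollary is essentially one sentence: assume $\mathrm{RC}$; by Feng's theorem $\mathrm{NS}_{\omega_1}$ is presaturated; apply Lemma \ref{presaturated} with $I=\mathrm{NS}_{\omega_1}$ to obtain the first partition relation, and apply Remark \ref{finitecase} (with $I=\mathrm{NS}_{\omega_1}$) to obtain the second family of relations, one for each $k\in\omega$.

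There is essentially no obstacle here since all the work has been done in the two inputs, so no further calculation is required. The only minor subtlety worth flagging for the reader is that one does not need the full strength of saturation (e.g.\ $\aleph_2$-saturation) but merely presaturation, which is exactly the hypothesis under which Lemma \ref{presaturated} was proved — this is why the use of Feng's theorem, rather than the stronger but unknown consequence of $\mathrm{RC}$ that $\mathrm{NS}_{\omega_1}$ is saturated, is the right level of generality to invoke.
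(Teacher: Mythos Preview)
Your proposal is correct and matches the paper's own proof; indeed the paper invokes exactly Feng's theorem together with Lemma \ref{presaturated} and Remark \ref{finitecase}. The paper additionally mentions an alternative route via Todor\v{c}evi\'{c}'s result that $\mathrm{RC}$ implies Chang's Conjecture, but your argument is one of the two the author gives.
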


\begin{proof}
There are two ways of seeing this. On one hand, $\mathrm{RC}$ implies Chang's Conjecture, as shown by Todor\v{c}evi\'{c} in \cite{MR1261218}. Then we can finish by the remark preceding Lemma \ref{presaturated}. On the other hand, $\mathrm{RC}$ implies the non-stationary ideal on $\omega_1$ is presaturated, as shown by Feng in \cite{MR1683892}. Then apply Lemma \ref{presaturated} and Remark \ref{finitecase}.
\end{proof}

It is now natural to ask if we can prove anything stronger, in particular, the next natural partition relation to consider is 
$\begin{pmatrix}
\omega_2 \\
\omega_1
\end{pmatrix} \to \begin{pmatrix}
\omega \\
\omega_1
\end{pmatrix}^{1,1}_{\omega}$, which is a consequence of the existence of an ideal $I$ on $\omega_1$ that is $(\aleph_2,\aleph_2,\aleph_0)$-saturated, namely for any $\{X_\alpha\in I^+: \alpha<\omega_2\}$, there exists $A\in [\omega_2]^{\aleph_2}$ such that for any $B\in [A]^{\aleph_0}$, $\bigcap_{\alpha\in B} X_\alpha\in I^+$ (see Laver \cite{MR673792} for more details). It turns out that $\mathrm{RC}$ does not decide the truth of this statement.

We use and modify a little the idea of Prikry (\cite{MR0297577}) to add an $\omega_2$-sequence of $\omega_1$-partitions of $\omega_1$. In our final model, there exists a collection of sets $\langle A_{\alpha,\beta} \subset \omega_1: \alpha<\omega_2, \beta<\omega_1\rangle$
such that 
\begin{itemize}
\item for each $\alpha<\omega_2$, $\{A_{\alpha,\beta}: \beta\in \omega_1\}$ is a partition of $\omega_1$,
\item $|\omega_1-\bigcup_{n\in \omega} A_{\alpha_n, \xi_n}|\leq \aleph_0$ for any distinct $\langle \alpha_n\in \omega_2 : n\in \omega\rangle$, and not necessarily distinct $\langle \xi_n\in \omega_1 : n\in \omega\rangle$.
\end{itemize}

\begin{notation}
Given a countable function $S$ with its domain a subset of $\omega_2\times \omega_1$, let $S_0\in [\omega_2]^{\leq \omega}$ denote the projection of $\mathrm{dom}(S)$ to its first coordinate and $S_1\in [\omega_1]^{\leq \omega}$ denote the projection of $\mathrm{dom}(S)$ to its second coordinate.
\end{notation}

$\mathbb{P}_{\omega_2,\omega_1}$ consists of pairs $(S,\mathcal{A})$ where $S: \omega_2\times \omega_1\to \omega_1$ is a countable partial function such that $S_1\in \omega_1$ and $\mathcal{A}$ is a countable collection of countably infinite partial functions from $\omega_2$ to $\omega_1$ closed under co-finite restrictions, namely for each $f\in \mathcal{A}$, if $A=^*\mathrm{dom}(f), A\subset \mathrm{dom}(f)$, then $f\restriction A\in \mathcal{A}$.

We say $(S',\mathcal{A}')\leq (S,\mathcal{A})$ iff $S'\supset S$ and $\mathcal{A}'\supset \mathcal{A}$ and for all $\beta\in S'_1 -   S_1$ and for all $f\in \mathcal{A}$, there exists $\alpha\in \mathrm{dom}(f)$ such that $(\alpha, \beta)\in \mathrm{dom}(S')$ and $S'(\alpha,\beta)=f(\alpha)$.

\begin{remark}
In general, for a regular cardinal $\kappa$, we can define $\mathbb{P}_{\kappa,\omega_1}$ analogously by simply replacing $\omega_2$ with $\kappa$.
\end{remark}

It is easy to see that the ordering is transitive.

\begin{claim}\label{niceChainClosure}
$\mathbb{P}_{\omega_2,\omega_1}$ is $\aleph_2$-c.c. and countably closed assuming $\mathrm{CH}$.
\end{claim}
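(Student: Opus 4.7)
The plan is to handle the two assertions separately, with countable closure following from a routine union construction and the chain condition requiring a $\Delta$-system argument combined with CH-driven pigeonholes, plus one crucial observation about how the second projection interacts with the ordering.

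For countable closure, given a decreasing $\langle (S_n, \mathcal{A}_n) : n \in \omega\rangle$, I would form $(S^*, \mathcal{A}^*) = (\bigcup_n S_n,\, \bigcup_n \mathcal{A}_n)$. The first coordinate $S^*$ is a countable partial function (countable union of compatible countable functions), and its second projection $S^*_1 = \sup_n (S_n)_1$, being the supremum of an increasing $\omega$-sequence of countable ordinals, is itself a countable ordinal. The collection $\mathcal{A}^*$ is a countable union of collections each closed under cofinite restrictions, so it inherits that closure property. The nontrivial ordering clause says that, for fixed $n$, any $\beta \in S^*_1 \setminus (S_n)_1$ and any $f \in \mathcal{A}_n$ must admit $\alpha \in \mathrm{dom}(f)$ with $S^*(\alpha,\beta) = f(\alpha)$; this is delivered by picking the first $m > n$ with $\beta \in (S_m)_1$ and invoking $(S_m, \mathcal{A}_m) \leq (S_n, \mathcal{A}_n)$.

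For the $\aleph_2$-chain condition, suppose we are given $\langle (S_\alpha, \mathcal{A}_\alpha) : \alpha < \omega_2 \rangle$. I would attach to each $\alpha$ the countable set $Y_\alpha = (S_\alpha)_0 \cup \bigcup\{\mathrm{dom}(f) : f \in \mathcal{A}_\alpha\}$ and apply the $\Delta$-System Lemma; this is where CH enters, since $\aleph_1^{\aleph_0} = \aleph_1$ is exactly the hypothesis needed to get a $\Delta$-system on $\aleph_2$-many countable sets. This produces $I \in [\omega_2]^{\aleph_2}$ and a countable root $r$. Under CH, there are only $\aleph_1$ many countable partial functions $r \times \omega_1 \to \omega_1$ and only $\aleph_1$ countable ordinals, so a pigeonhole gives $J \in [I]^{\aleph_2}$ on which both $S_\alpha \cap (r \times \omega_1)$ and $(S_\alpha)_1$ are constant, say equal to some $S^\flat$ and $\gamma$ respectively.

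For $\alpha \neq \beta$ in $J$, the amalgamation $(S_\alpha \cup S_\beta,\, \mathcal{A}_\alpha \cup \mathcal{A}_\beta)$ is a genuine condition: since $\mathrm{dom}(S_\alpha) \cap \mathrm{dom}(S_\beta) \subseteq r \times \omega_1$ and both restrict to $S^\flat$ there, the union is a function; its second projection is $\gamma \cup \gamma = \gamma$; and the union of two countable collections closed under cofinite restrictions is again such a collection. The step I expect to be the main subtlety --- and the reason CH is used to stabilize $(S_\alpha)_1$ in addition to setting up the $\Delta$-system --- is the third clause of the ordering: had we stabilized only the values of $S_\alpha \cap (r \times \omega_1)$, the amalgamation's second projection could properly contain $(S_\alpha)_1$ and we would have to exhibit witnesses for newly introduced ordinals from every $f \in \mathcal{A}_\alpha$; by forcing $(S_\alpha)_1 = (S_\beta)_1 = \gamma$ the set $(S_\alpha \cup S_\beta)_1 \setminus (S_\alpha)_1$ is empty, so the clause becomes vacuous and the amalgamation trivially refines both conditions. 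This rules out any putative antichain of size $\aleph_2$, completing the argument.
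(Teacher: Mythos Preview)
Your proof is correct and follows essentially the same approach as the paper: countable closure by taking the union (the paper simply calls this ``immediate''), and the chain condition by a $\Delta$-system plus CH-pigeonhole to stabilize $(S_\alpha)_1$ and $S_\alpha$ on the root, so that the third ordering clause becomes vacuous. One small remark: your inclusion of $\bigcup\{\mathrm{dom}(f): f\in\mathcal{A}_\alpha\}$ in the set $Y_\alpha$ to which you apply the $\Delta$-system is unnecessary --- the paper applies the $\Delta$-system only to $\mathrm{dom}(S_\alpha)$, and indeed your own compatibility argument never uses the $\mathcal{A}$-part of the root.
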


\begin{proof}
Countable closure is immediate. To see $\aleph_2$-c.c, given a collection of conditions $p_i=(S_i, \mathcal{A}_i)$ for $i<\omega_2$, we apply the $\Delta$-System Lemma (see Lemma III.6.15 in \cite{MR2905394} for a proof) to get $A\in [\omega_2]^{\aleph_2}$ such that

\begin{itemize}
\item $(S_i)_1$ is the same for all $i\in A$.
\item $\mathrm{dom}(S_i)$ forms a $\Delta$-system with root $r$ where $S_i\restriction r$ are all the same for $i\in A$.
\end{itemize}
Now any two conditions with indices $A$ are easily seen to be compatible.

\end{proof}

\begin{claim}\label{EnlargeByOne}
For any $\alpha\in \omega_2$ and any $\beta\in \omega_1$ and any $p=(S_p, \mathcal{A}_p)\in \mathbb{P}_{\omega_2, \omega_1}$, there exists $p'\leq p$ such that $(\alpha,\beta)\in \mathrm{dom}(S_{p'})$.
\end{claim}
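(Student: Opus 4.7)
The plan is to split into cases based on whether $\beta$ already lies in the second projection of $\mathrm{dom}(S_p)$, i.e., whether $\beta < (S_p)_1$, noting that $(S_p)_1$ is a countable ordinal by definition of the forcing.

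First the easy case: if $\beta < (S_p)_1$, then either $(\alpha,\beta) \in \mathrm{dom}(S_p)$ and we take $p' = p$, or we set $S_{p'} = S_p \cup \{((\alpha,\beta), 0)\}$ and $\mathcal{A}_{p'} = \mathcal{A}_p$. In the latter case $(S_{p'})_1 = (S_p)_1$, so the set $(S_{p'})_1 \setminus (S_p)_1$ over which the ordering demands a witness is empty, and $p' \leq p$ holds vacuously.

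The main case is $\beta \geq (S_p)_1$. The plan is to set $(S_{p'})_1 = \beta + 1$ and $\mathcal{A}_{p'} = \mathcal{A}_p$, and to define $S_{p'}$ on the countably many fresh slices $\omega_2 \times \{\beta'\}$ for $\beta' \in [(S_p)_1, \beta]$ so that for each such $\beta'$ and each $f \in \mathcal{A}_p$ there exists $\alpha' \in \mathrm{dom}(f)$ with $(\alpha', \beta') \in \mathrm{dom}(S_{p'})$ and $S_{p'}(\alpha', \beta') = f(\alpha')$, while also forcing $(\alpha,\beta) \in \mathrm{dom}(S_{p'})$. Enumerate the countable family $\mathcal{A}_p = \{f_n : n \in \omega\}$ (it is countable because each $f \in \mathcal{A}_p$ has countably infinite domain and $\mathcal{A}_p$ is itself countable). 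For each $\beta' \in [(S_p)_1, \beta]$ recursively choose $\alpha^{\beta'}_n \in \mathrm{dom}(f_n)$ distinct from all previously chosen $\alpha^{\beta'}_m$ for $m < n$, and additionally distinct from $\alpha$ when $\beta' = \beta$. This is always possible since each $\mathrm{dom}(f_n)$ is countably infinite and we only need to avoid finitely many forbidden values at each step. Define
$$S_{p'} = S_p \ \cup\ \{((\alpha,\beta), 0)\}\ \cup\ \{((\alpha^{\beta'}_n, \beta'),\, f_n(\alpha^{\beta'}_n)) : n \in \omega,\ \beta' \in [(S_p)_1, \beta]\}.$$

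The remaining verifications are routine. The distinctness clause within each slice and the exclusion of $\alpha$ at the top level ensure $S_{p'}$ is a well-defined partial function extending $S_p$; countability of $S_{p'}$ follows since both the set of new second coordinates and each slice are countable; the second projection of $\mathrm{dom}(S_{p'})$ is exactly $\beta + 1 \in \omega_1$; and by construction the ordering clause of $\leq$ holds, so $p' \leq p$. The only mildly delicate point—the potential clash of the chosen witnesses with one another or with the prescribed value at $(\alpha,\beta)$—is precisely what the inductive avoidance above handles, so there is no substantive obstacle.
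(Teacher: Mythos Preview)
Your proof is correct and follows essentially the same approach as the paper: split on whether $\beta$ already lies in $(S_p)_1$, and in the nontrivial case fill in every new level $\beta'\in[(S_p)_1,\beta]$ with a witness from each $f_n\in\mathcal{A}_p$, choosing the witnesses injectively within each level to avoid clashes. The only cosmetic difference is that the paper fixes a single choice of $\alpha_n$ reused at every new $\beta'$ and also adds $(\alpha,\beta',0)$ at every such $\beta'$; this incidentally ensures $(S_{p'})_1=\beta+1$ even in the degenerate case $\mathcal{A}_p=\emptyset$, whereas your construction as written would leave the intermediate levels empty in that case---a triviality you can patch by also inserting $(\alpha,\beta',0)$ at each new $\beta'$.
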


\begin{proof}
If $(\alpha,\beta)\in \mathrm{dom}(S_p)$, then we take $p'$ to be $p$. Otherwise, if $\beta\in (S_p)_1$, then just add $(\alpha,\beta,0)$ to $S_p$. If $\beta \not\in (S_p)_1$, then for each $f_n\in \mathcal{A}_p$, find distinct $\alpha_n\in \mathrm{dom}(f_n)$ different from $\alpha$, then add $(\alpha,\beta', 0)$ and $(\alpha_n, \beta', f_n(\alpha_n))$ for $n\in \omega$ to $S_p$ for each $\beta'\leq \beta$ such that $\beta'\not\in (S_p)_1$.

\end{proof}

\begin{claim}
For any infinite countable partial function $f: \omega_2\to\omega_1$ and any $p=(S_p, \mathcal{A}_p)\in \mathbb{P}$, there exists $p'\leq p$ such that $f\in \mathcal{A}_{p'}$.
\end{claim}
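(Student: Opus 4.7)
The plan is straightforward: keep the first coordinate unchanged and simply enlarge the second coordinate by throwing $f$, together with all its cofinite restrictions, into the family. Concretely, set
\[
p' \;=\; \bigl(\,S_p,\ \mathcal{A}_p \,\cup\, \{f\restriction A : A\subseteq \mathrm{dom}(f),\ |\mathrm{dom}(f)\setminus A|<\omega\}\,\bigr).
\]
The first step is to verify that $p'$ is an element of $\mathbb{P}_{\omega_2,\omega_1}$. The family $\mathcal{A}_{p'}$ is countable, since the set of cofinite subsets of a countably infinite set is countable. Each function $f\restriction A$ added has countably infinite domain, because $A =^* \mathrm{dom}(f)$ and $\mathrm{dom}(f)$ is countably infinite. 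Closure under cofinite restrictions holds ``piecewise'': restrictions of members of $\mathcal{A}_p$ remain in $\mathcal{A}_p$ by the assumption that $p\in\mathbb{P}_{\omega_2,\omega_1}$, while a cofinite restriction of $f\restriction A$ is again of the form $f\restriction A'$ with $A' =^* \mathrm{dom}(f)$, hence again in the added collection.

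The second step is to verify $p'\leq p$. Trivially $S_{p'}\supset S_p$ and $\mathcal{A}_{p'}\supset \mathcal{A}_p$. The only substantive clause in the definition of the ordering is the coherence requirement that for every $\beta\in (S_{p'})_1\setminus (S_p)_1$ and every $g\in\mathcal{A}_p$ there be some $\alpha\in\mathrm{dom}(g)$ with $(\alpha,\beta)\in\mathrm{dom}(S_{p'})$ and $S_{p'}(\alpha,\beta)=g(\alpha)$. Since $S_{p'}=S_p$, the set $(S_{p'})_1\setminus (S_p)_1$ is empty, and the clause holds vacuously. Thus $p'\leq p$, and by construction $f\in\mathcal{A}_{p'}$.

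There is no real obstacle: the ordering on $\mathbb{P}_{\omega_2,\omega_1}$ was designed so that a promise imposed by a member $g$ of the family $\mathcal{A}$ is only activated when one enlarges the countable ordinal $S_1$, not when one enlarges $\mathcal{A}$ itself. Consequently inserting a single new function into $\mathcal{A}$ is essentially free. The nontrivial density work in this section will instead come from arguments that simultaneously enlarge $(S_p)_1$ while honoring all promises from the functions already present in $\mathcal{A}_p$ (as is already seen in Claim \ref{EnlargeByOne}).
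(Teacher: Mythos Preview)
Your proof is correct and follows exactly the same approach as the paper's: set $S_{p'}=S_p$ and let $\mathcal{A}_{p'}$ be the closure of $\mathcal{A}_p\cup\{f\}$ under cofinite restrictions, then note that the extension clause is vacuous because $(S_{p'})_1\setminus (S_p)_1=\emptyset$. You have simply spelled out the details the paper omits.
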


\begin{proof}
Define $p'=(S_{p'}, \mathcal{A}_{p'})$ such that $S_{p'}=S_p$ and $\mathcal{A}_{p'}$ is the closure of $\mathcal{A}_p\cup \{f\}$ under co-finite restrictions. It is easy to verify that $p'$ is as desired.
\end{proof}

\begin{lemma}[Prikry, \cite{MR0297577}]\label{prikry}
Assume $V\models \mathrm{CH}$. Then in $V^{\mathbb{P}_{\omega_2,\omega_1}}$, $\begin{pmatrix}
\omega_2 \\
\omega_1
\end{pmatrix} \not\to \begin{bmatrix}
\omega \\
\omega_1
\end{bmatrix}^{1,1}_{\omega_1}$. Namely, there exists $f: \omega_2\times \omega_1\to \omega_1$ such that for any $A\in [\omega_2]^\omega$ and $B\in [\omega_1]^{\omega_1}$, $f'' A\times B = \omega_1$. This clearly implies $\begin{pmatrix}
\omega_2 \\
\omega_1
\end{pmatrix}\not \to \begin{pmatrix}
\omega \\
\omega_1
\end{pmatrix}^{1,1}_{\omega}$.
\end{lemma}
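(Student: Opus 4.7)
The plan is to show that the generic function added by $\mathbb{P}_{\omega_2,\omega_1}$ itself witnesses the negation of the partition relation. Let $G$ be generic for $\mathbb{P}_{\omega_2,\omega_1}$ over $V$ and set $F = \bigcup \{S_p : p \in G\}$. A routine density argument based on Claim~\ref{EnlargeByOne} shows that $F$ is a total function from $\omega_2 \times \omega_1$ to $\omega_1$. I will verify that for any names $\dot{A}, \dot{B}$ and any condition $p \in \mathbb{P}_{\omega_2,\omega_1}$ forcing $\dot{A} \in [\omega_2]^\omega$ and $\dot{B} \in [\omega_1]^{\omega_1}$, and any $\gamma < \omega_1$, some $q \leq p$ forces $\check{\gamma} \in F''(\dot{A} \times \dot{B})$; genericity then gives $F''(A \times B) = \omega_1$ for every such pair $A, B$ in the extension.

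First, since $\mathbb{P}_{\omega_2,\omega_1}$ is countably closed by Claim~\ref{niceChainClosure}, no countable sequence of ordinals is added, so there is $p_1 \leq p$ and a countable set $A \in V$ with $p_1 \Vdash \dot{A} = \check{A}$. Let $g: A \to \{\gamma\}$ be the constant function; regarded as a partial function from $\omega_2$ to $\omega_1$ it is countably infinite. Set $\mathcal{A}^* = \mathcal{A}_{p_1} \cup \{g \restriction A' : A' \subseteq A, \ A \setminus A' \text{ finite}\}$, which remains countable and closed under cofinite restrictions. The pair $p_2 = (S_{p_1}, \mathcal{A}^*)$ extends $p_1$ in $\mathbb{P}_{\omega_2,\omega_1}$ because $(S_{p_2})_1 = (S_{p_1})_1$ makes the new-$\beta$ clause in the ordering vacuous, and now $g \in \mathcal{A}_{p_2}$.

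Next, since $p_2 \Vdash \dot{B}$ is uncountable while $(S_{p_2})_1$ is a countable ordinal, pick $r \leq p_2$ together with some $\beta \geq (S_{p_2})_1$ such that $r \Vdash \check{\beta} \in \dot{B}$. I then extend $r$ to $r'$ with $(S_{r'})_1 = \beta + 1$ by enumerating $\mathcal{A}_r = \{f_n : n \in \omega\}$ and, for each $\beta' \in [(S_r)_1, \beta]$, choosing pairwise distinct $\alpha_{n,\beta'} \in \mathrm{dom}(f_n)$ and declaring $S_{r'}(\alpha_{n,\beta'}, \beta') = f_n(\alpha_{n,\beta'})$; countability of $\mathcal{A}_r$ together with the countable infinity of each $\mathrm{dom}(f_n)$ makes this possible without conflict. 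Then $\beta \in (S_{r'})_1 \setminus (S_{p_2})_1$ and $g \in \mathcal{A}_{p_2}$, so the ordering condition applied to $r' \leq p_2$ furnishes some $\alpha \in \mathrm{dom}(g) = A$ with $S_{r'}(\alpha, \beta) = g(\alpha) = \gamma$. Consequently $r' \Vdash \check{\alpha} \in \dot{A}$, $\check{\beta} \in \dot{B}$ and $F(\check{\alpha}, \check{\beta}) = \check{\gamma}$, as required.

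The heart of the argument is the interaction between $\mathcal{A}$ and the ordering: once $g: A \to \{\gamma\}$ has been packed into $\mathcal{A}$, every later ordinal added to the second-coordinate domain is forced to realize the value $\gamma$ on some $\alpha \in A$, and because $\dot{B}$ is uncountable it must contain some such ordinal above $(S_{p_2})_1$. The one technical nuisance is organizing the extension from $r$ to $r'$ so that the ordering is honored for every $f \in \mathcal{A}_r$ simultaneously while keeping the witnesses for different $f_n$'s distinct, but this is already handled in spirit in the proof of Claim~\ref{EnlargeByOne} and poses no real obstacle.
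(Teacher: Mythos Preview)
The paper does not supply its own proof of this lemma: it is stated with attribution to Prikry and a citation, and the text moves directly to the next definition. So there is nothing to compare against on the paper's side.

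Your argument is correct and is exactly the intended one: the generic function $F$ is the witness, and the side conditions $\mathcal{A}$ are designed precisely so that once the constant function $g:A\to\{\gamma\}$ is loaded in, every new second-coordinate level must hit $\gamma$ on some element of $A$. One small wrinkle worth tightening: when you pass from $p_2$ to $r$ deciding $\beta\in\dot{B}$, nothing prevents $(S_r)_1$ from already exceeding $\beta$, in which case you cannot set $(S_{r'})_1=\beta+1$. But in that case $\beta\in (S_r)_1\setminus (S_{p_2})_1$ and $g\in\mathcal{A}_{p_2}$, so $r\leq p_2$ already provides the required $\alpha\in A$ with $S_r(\alpha,\beta)=\gamma$; no further extension is needed. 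With that case split made explicit, the proof is complete.
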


\begin{definition}\label{genericSPCP}
We say that $\omega_2$ is \emph{generically supercompact via countably closed forcing} if for any cardinal $\theta\geq \omega_2$ and any set $x\in H(\theta^+)$, there exists a countably closed forcing $\mathbb{M}$ such that whenever $H\subset \mathbb{M}$ is generic over $V$, in $V[H]$, there exists an elementary embedding $j: V\to M$ where $M$ is some transitive class such that 
\begin{enumerate}
\item $\mathrm{crit}(j)=\omega_2^V$,
\item $j''x\in M$,
\item $j(\omega_2^V)>\theta$.
\end{enumerate}

\end{definition}

\begin{remark}
In Definition \ref{genericSPCP}, instead of checking all $x\in H(\theta^+)$, we will get an equivalent definition even if we only check those $x's$ that are subsets of $\theta$. 
\end{remark}

\begin{lemma}\label{converge}
If $\omega_2$ is generically supercompact via countably closed forcing, then $\mathrm{RC}$ and $\mathrm{WRP}$ both hold.
\end{lemma}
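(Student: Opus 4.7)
The plan is to use generic supercompactness in the standard way for each conclusion, transferring a witness obtained in the generic extension $V[H]$ back to $V$ via the elementarity of an embedding $j : V \to M$ with $\mathrm{crit}(j) = \omega_2^V$ and $j(\omega_2^V)$ larger than the relevant cardinal parameter. The countable closure of the forcing $\mathbb M$ is what keeps the combinatorics on the ground-model side intact.

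For $\mathrm{RC}$, fix a nonspecial tree $T$, which we may take to be of the form $(\theta,<_T)$. Apply the hypothesis with $x = <_T$ to obtain a countably closed $\mathbb M$ and, in $V[H]$, an embedding $j:V\to M$ with $j''\theta\in M$ and $j(\omega_2^V)>\theta$. First, Claim \ref{notspecializedclosed} ensures $T$ remains nonspecial in $V[H]$. Next, the restriction of $<_{j(T)}$ to the set $j''\theta\in M$ gives a subtree of $j(T)$ which, via $j\restriction\theta$, is isomorphic to $T$ in $V[H]$; since the absence of a specializing map is downward absolute from $V[H]$ to $M$, this subtree is nonspecial in $M$. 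Its size in $M$ is $|j''\theta|^M = |\theta|^M \leq \theta < j(\omega_2^V)$, where the first equality uses absoluteness of order type. Therefore $M$ satisfies ``$j(T)$ has a nonspecial subtree of cardinality less than $j(\omega_2^V)$'', and by elementarity $T$ has a nonspecial subtree of cardinality less than $\omega_2$, which must be $\aleph_1$ since countable trees are automatically special.

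For $\mathrm{WRP}$, fix regular $\lambda\geq\omega_2$ and stationary $S\subseteq P_{\omega_1}\lambda$. Apply the hypothesis with $x=\lambda$ to obtain countably closed $\mathbb M$ and, in $V[H]$, an embedding $j:V\to M$ with $j''\lambda\in M$ and $j(\omega_2^V)>\lambda$. A routine elementary submodel argument (build a descending sequence of conditions inside a countable $M\prec H(\theta)^V$ chosen so that $M\cap\lambda\in S$, and use countable closure to take a lower bound) shows that $S$ remains stationary in $V[H]$. I then claim that $W := j''\lambda$ witnesses reflection of $j(S)$ at $j(\lambda)$ in $M$. Plainly $j(\omega_1^V)=\omega_1^V\subseteq W$ and $|W|^M\leq \lambda < j(\omega_2^V)$, so only the stationarity of $j(S)\cap P^M_{\omega_1}(W)$ in $M$ needs checking. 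Given any $f \in M$ generating a club in $P^M_{\omega_1}(W)$, define $\tilde f : \lambda^{<\omega}\to \lambda$ in $V[H]$ by $\tilde f(\bar\alpha) = j^{-1}(f(j''\bar\alpha))$. Stationarity of $S$ in $V[H]$ produces $x\in S$ closed under $\tilde f$, and then $j(x)=j''x$ (using that $x$ is countable and $j$ fixes $\omega_1^V$) is an element of $M$ that lies in $j(S)$ and is closed under $f$; so $j(S)\cap P^M_{\omega_1}(W)$ meets every club of $P^M_{\omega_1}(W)$ from $M$'s perspective. Applying elementarity of $j$, $V$ models that there exists $W\subseteq \lambda$ of size $<\omega_2$ containing $\omega_1$ with $S\cap P_{\omega_1}W$ stationary in $P_{\omega_1}W$, and $|W|=\aleph_1$ follows from $\omega_1\subseteq W$.

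The main bookkeeping obstacle is the interplay between the three models $V$, $M$, and $V[H]$: verifying that nonspecialness of the pullback subtree, and stationarity of the pullback family $j''S$, actually hold in $M$ (not merely in the ambient $V[H]$). The key facts enabling this are the absoluteness of order type (for the cardinality computation in $\mathrm{RC}$), the identity $j(x)=j''x$ for countable $x$ (which places the would-be witness into $M$ rather than merely into $V[H]$), and the preservation of stationarity of $S\subseteq P_{\omega_1}\lambda$ under countably closed forcing (so that the pullback argument for $\mathrm{WRP}$ has a ground-model stationary set to work with). Modulo these, both arguments are direct applications of the standard reflection template.
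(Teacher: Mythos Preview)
Your proof is correct and follows the same overall template as the paper's: for each of $\mathrm{RC}$ and $\mathrm{WRP}$ you pass to $V[H]$, use the embedding $j$ to produce a small witness inside $M$, and pull back by elementarity. The $\mathrm{RC}$ argument is essentially identical to the paper's.

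For $\mathrm{WRP}$ there is a minor but genuine difference worth noting. The paper applies generic supercompactness with $x=S$ (so $\theta=\lambda^\omega$), obtaining $j''S\in M$; it then argues that $j''S$ is stationary in $[j''\lambda]^\omega$ in $V[H]$ via the bijection $j\restriction\lambda$, and transfers this to $M$ by downward absoluteness of stationarity. You instead take $x=\lambda$, so only $j''\lambda\in M$ is guaranteed, and you verify stationarity of $j(S)\cap P_{\omega_1}^M(j''\lambda)$ in $M$ by a direct club-by-club pullback: given $f\in M$, conjugate to $\tilde f$ in $V[H]$, find $x\in S$ closed under $\tilde f$, and observe $j(x)=j''x\in M$ is the required witness. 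Both routes are standard; yours avoids needing $j''S\in M$ (and hence the larger parameter $\lambda^\omega$), while the paper's is slightly slicker once $j''S$ is available.
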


\begin{proof}
To see that $\mathrm{RC}$ holds, fix some nonspecial tree $T$ of size $\theta\geq \omega_2$. We may assume $T$ is of the form $(\theta, <_T)$. Fix $\mathbb{M}$ that witnesses the generic supercompactness of $\omega_2$ with respect to the parameters $\theta$ and $T$. Let $H\subset \mathbb{M}$ be generic over $V$. Consider $Y=(j''\theta, <_{j(T)})\in M$. In $V[H]$, since $(T, <_T)\simeq (j''\theta,<_{j(T)})$ and $T$ remains nonspecial by Claim \ref{notspecializedclosed}, $Y$ is nonspecial in $V[H]$, hence in $M$. As $M\models `` |j'' \theta| = |\theta|\leq \theta < j(\omega_2^V)$'', by the elementarity of $j$, in $V$, it is true that there exists a nonspecial subtree of $T$ of size $\aleph_1$.

To see that $\mathrm{WRP}$ holds, fix some regular $\lambda\geq \omega_2$ and a stationary set $S\subset [\lambda]^\omega$. Let $\mathbb{M}$ be the witness to the generic supercompactness of $\omega_2$ with parameters $\lambda^\omega$ and $S$. Let $H\subset \mathbb{M}$ be generic over $V$ and $j: V\to M$ be the corresponding elementary embedding in $V[H]$. Consider $Y=j''S=\{j(x): x\in S\}=\{j''x: x\in S\}\in M$. Note $j''\lambda\in M$ since $j''\lambda = j''\bigcup S= \bigcup j''S$. It is easy to see that in $V[H]$, $S$ is stationary in $[\lambda]^\omega$ iff $j'' S$ is stationary in $[j''\lambda]^\omega$. Since $\mathbb{M}$ is countably closed hence proper, $S$ remains a stationary subset of $[\lambda]^\omega$ in $V[H]$. Hence in $V[H]$, $j''S$ is a stationary subset of $[j''\lambda]^\omega$. Since $j'' S, [j''\lambda]^\omega \in M$, we have that $M\models `` j'' S$ is a stationary subset of $[j''\lambda]^\omega$''. Also note that in $M$, we have that $|j''\lambda|=|\lambda|\leq \lambda \leq \lambda^\omega < j(\omega_2^V)$, $j'' S \subset j(S)$ and $\omega_1\subset j''\lambda$. Therefore, by the elementarity of $j$, it is true in $V$ that there exists $W\subset \lambda$ of size $\aleph_1$ containing $\omega_1$ such that $S\cap [W]^\omega$ is a stationary subset of $[W]^\omega$.

\end{proof}

\begin{lemma}\label{factor}
Let $\kappa$ be a supercompact cardinal.
In $V[G][H]$ where $G$ is generic for $\mathrm{Coll}(\omega_1,<\kappa)$ and $H$ is generic for $(\mathbb{P}_{\omega_2,\omega_1})^{V[G]}$ over $V[G]$, $\mathrm{WRP}$ and $\mathrm{RC}$ both hold.
\end{lemma}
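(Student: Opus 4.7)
The plan is to show that $\omega_2$ is generically supercompact via countably closed forcing in $V[G][H]$, at which point Lemma~\ref{converge} delivers both $\mathrm{RC}$ and $\mathrm{WRP}$. Fix $\theta \geq \omega_2^{V[G][H]} = \kappa$ and $x \subseteq \theta$ in $V[G][H]$, and in $V$ choose $j \colon V \to M$ with $\mathrm{crit}(j) = \kappa$, $j(\kappa) > \theta$, and $M^\theta \subseteq M$. The witnessing $\mathbb{M}$ will be the two-step iteration $\mathrm{Coll}(\omega_1,[\kappa,j(\kappa))) * \dot{\mathbb{R}}$: the first factor gives the standard lift $j_1 \colon V[G] \to M[G][G^*]$ through the Lévy collapse via the factorization $j(\mathrm{Coll}(\omega_1,<\kappa)) = \mathrm{Coll}(\omega_1,<\kappa) \times \mathrm{Coll}(\omega_1,[\kappa,j(\kappa)))$, and $\dot{\mathbb{R}}$ names a countably closed quotient forcing that further lifts $j_1$ through $\mathbb{P} = \mathbb{P}_{\omega_2,\omega_1}^{V[G]}$.

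The key observation for the second step is that the naive master-condition argument fails---since $\bigcup_{p \in H}(S_p)_1 = \omega_1$, no countable condition in $j_1(\mathbb{P})$ can extend all of $j_1'' H$ simultaneously---but can be bypassed as follows. Every $p = (S_p, \mathcal{A}_p) \in \mathbb{P}$ is built entirely from data below $\kappa = \mathrm{crit}(j_1)$, hence $j_1(p) = p$ and $j_1''H = H$; and there is a projection $\pi \colon j_1(\mathbb{P}) \to \mathbb{P}^{V[G]}$ obtained by restricting the first coordinate to $\kappa \times \omega_1$ (together with a suitable projection on the $\mathcal{A}$-component that includes both $\{f \in \mathcal{B} : \mathrm{dom}(f) \subseteq \kappa\}$ and, where needed, restrictions to $\kappa$ of other $f \in \mathcal{B}$). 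Well-definedness uses that conditions of $j_1(\mathbb{P})$ are countable objects whose components, by $M^\omega \subseteq M$ and the $\omega_1$-closure of the collapses, lie in $M \subseteq V[G]$; the projection property follows by taking $T_2 = T \cup T_1$ and extending on coordinates in $j(\kappa) \setminus \kappa$ (using the infinite part of $\mathrm{dom}(f) \setminus \kappa$, whenever present, to honor $f$ on the new $\beta$-values). Letting $\dot{\mathbb{R}}$ name $\{q \in j_1(\mathbb{P}) : \pi(q) \in H\}$, a $\dot{\mathbb{R}}$-generic $H^+$ is automatically $j_1(\mathbb{P})$-generic over $M[G][G^*]$, and a density argument---using the projection property and compatibility of $\pi(q_0)$ with $p$ in $H$ for each $p \in H$ and $q_0 \in \dot{\mathbb{R}}$---shows $H \subseteq H^+$.

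The main obstacle is verifying that $\dot{\mathbb{R}}$ is countably closed in $V[G][H][G^*]$, which is what renders $\mathbb{M}$ countably closed. Given a decreasing sequence $(q_n)_{n<\omega}$ in $\dot{\mathbb{R}}$, I will use countable closure of the generic filter $H$ to produce $p^* \in H$ with $p^* \leq_{\mathbb{P}} \pi(q_n)$ for all $n$, and then construct $q_\infty \leq q_n$ in $j_1(\mathbb{P})$ with $\pi(q_\infty) \leq p^*$ by iteratively applying the projection property together with countable closure of $j_1(\mathbb{P})$, carefully extending the $T$-component on coordinates in $j(\kappa) \setminus \kappa$ to honor all $\mathcal{A}$-data accumulated from the $q_n$; since $\pi(q_\infty) \leq p^* \in H$ implies $\pi(q_\infty) \in H$, this $q_\infty$ witnesses closure in $\dot{\mathbb{R}}$. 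Once $\mathbb{M}$ is known to be countably closed, the resulting lift $j^+ \colon V[G][H] \to M[G][G^*][H^+]$ satisfies $\mathrm{crit}(j^+) = \kappa$, $j^+(\kappa) > \theta$, and $(j^+)''x = j''x \in M$ by $\theta$-supercompactness of $j$, verifying Definition~\ref{genericSPCP} and yielding the conclusion via Lemma~\ref{converge}.
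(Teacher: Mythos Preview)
Your overall strategy matches the paper's: verify that $\omega_2$ is generically supercompact via countably closed forcing in $V[G][H]$, then invoke Lemma~\ref{converge}. The divergence is in how you organize the quotient forcing needed to lift $j_1$ through $\mathbb{P}$, and here there is a genuine gap.

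The paper never introduces a projection $\pi$. Instead, since $\mathbb{P}$ is $\kappa$-c.c.\ in $V[G]$ (by $\mathrm{CH}$ and Claim~\ref{niceChainClosure}) and $\mathrm{crit}(j_1)=\kappa$, the identity $\mathbb{P}\hookrightarrow j_1(\mathbb{P})$ is a complete embedding, and one simply forces with the standard quotient $j_1(\mathbb{P})/H=\{q\in j_1(\mathbb{P}): q\text{ is compatible with every }h\in H\}$. The closure proof is then direct: given a decreasing $\langle p_n\rangle$ in the quotient, the coordinatewise union $q=\bigcup_n p_n$ is shown to be compatible with each fixed $h\in H$ by explicitly building a common extension, using that each $\mathcal{A}_{p_n}$ is closed under cofinite restrictions to supply infinitely many candidate witnesses $\alpha$ for each $f$ and each new level $\beta$. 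No $p^*\in H$ below all projections is needed.

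Your map $\pi$ is not well-defined as described, and the sketch hides a real obstruction. Suppose $f\in\mathcal{B}$ has both $\mathrm{dom}(f)\cap\kappa$ and $\mathrm{dom}(f)\setminus\kappa$ infinite. Then $f\restriction\kappa$ is \emph{not} a cofinite restriction of $f$, hence need not lie in $\mathcal{B}$. If you nevertheless place $f\restriction\kappa$ into the second coordinate of $\pi(q)$ (as your parenthetical suggests), then $\pi$ fails to be order-preserving: when $(T',\mathcal{B}')\leq(T,\mathcal{B})$ in $j_1(\mathbb{P})$, the witness $\alpha$ for $f$ at a new level $\beta$ may lie above $\kappa$, so it does not witness $f\restriction\kappa$ in the restricted condition. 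The alternative---taking only $\{f\in\mathcal{B}:\mathrm{dom}(f)\subseteq\kappa\}$ for the second coordinate---does give order-preservation, but then verifying the projection property requires a diagonalization over all $f$ with $\mathrm{dom}(f)\setminus\kappa$ infinite that you have not carried out. Either way, the paper's complete-embedding route is both simpler and avoids these issues entirely.
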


\begin{proof}

We will show that in $V[G][H]$, $\kappa=\omega_2$ is generically supercompact via countably closed forcing. Then we can finish by Lemma \ref{converge}. 

First notice that $\mathbb{P}_{\omega_2,\omega_1}$ defined in $V[G]$ is the same as $\mathbb{P}_{\kappa,\omega_1}$ defined in $V$ since $\mathrm{Coll}(\omega_1, < \kappa)$ is countably closed. In $V$, fix $\theta\geq \kappa$ and a $(\mathrm{Coll}(\omega_1,<\kappa)\times \mathbb{P}_{\kappa,\omega_1})$-name $\dot{x}$ for a subset of $\theta$.
Let $\lambda>\theta$ be some sufficiently large regular cardinal, specifically larger than the cardinality of any nice $(\mathrm{Coll}(\omega_1,<\kappa)\times \mathbb{P}_{\kappa,\omega_1})$-name of a subset of $\theta$. Fix $j: V\to M$, an embedding witnessing $\kappa$ is $\lambda$-supercompact, namely $\mathrm{crit}(j)=\kappa$, $j(\kappa)>\lambda$ and ${}^\lambda M\subset M$. Observe that $j(\mathrm{Coll}(\omega_1,<\kappa)\times \mathbb{P}_{\kappa,\omega_1})=\mathrm{Coll}(\omega_1,<j(\kappa))\times \mathbb{P}_{j(\kappa),\omega_1}$. 

Let $R\subset \mathrm{Coll}(\omega_1, [\kappa,<j(\kappa))$ be generic over $V[G][H]$. We can lift $j$ to $j: V[G]\to M[G][R]$ (we slightly abuse the notation by using the same $j$). Notice in $V[G]$, $j\restriction \mathbb{P}_{\omega_2,\omega_1}: \mathbb{P}_{\omega_2,\omega_1}\to j(\mathbb{P}_{\omega_2,\omega_1})$ is a complete embedding since $V[G]\models \mathrm{CH}$ so by Claim \ref{niceChainClosure}, $\mathbb{P}_{\omega_2,\omega_1}$ is $\aleph_2$-c.c. in $V[G]$. Therefore, as $\mathrm{crit}(j)=\kappa=(\omega_2)^{V[G]}$, each maximal antichain of $\mathbb{P}_{\omega_2,\omega_1}$ in $V[G]$ is mapped to its pointwise image, which is a maximal antichain of $j(\mathbb{P}_{\omega_2,\omega_1})$.
Notice also that $j\restriction \mathbb{P}_{\omega_2, \omega_1}$ is just the identity function on $\mathbb{P}_{\omega_2, \omega_1}$.

\begin{claim}\label{quotientCountablyClosed}
$j(\mathbb{P}_{\omega_2,\omega_1})/H$ is countably closed in $V[G][R][H]$.
\end{claim}

\begin{proof}[Proof of the Claim]
Let $\mathbb{P}=(\mathbb{P}_{\kappa,\omega_1})^V=(\mathbb{P}_{\omega_2,\omega_1})^{V[G]}$.
Fix a decreasing sequence $\langle p_n \in j(\mathbb{P}_{\omega_2,\omega_1})/H : n\in \omega\rangle$. We claim that $q=\bigcup_n p_n \in j(\mathbb{P})/H$, i.e. we show for all $h\in H$, $q$ is compatible with $h$. This clearly implies that $q$ is a lower bound for $\langle p_n: n\in \omega\rangle$ in $j(\mathbb{P}_{\omega_2,\omega_1})/H$.

Note that for each $n\in \omega$, $S_{q_n}$ must agree with $S_H=\bigcup\{S_t: t\in H\}$ which is a total function on $\kappa\times \omega_1\to \omega_1$. 
Fix $h\in H$. By the observation above, we know $S_h$ is consistent with $S_q$. By extending $h$ and by Claim \ref{EnlargeByOne}, we may assume $(S_h)_1> (S_q)_1$.

Fix $\beta \in (S_h)_1-(S_q)_1$. For each $f\in \mathcal{A}_q$, there exists $n\in \omega$ such that $f\in \mathcal{A}_{p_n}$. By the fact that $h$ and $p_n$ are compatible, there exists $\alpha\in \mathrm{dom}(f)$ such that $(\alpha,\beta, f(\alpha))$ can be added to $S_h$. Since $\mathcal{A}_{p_n}$ is closed under co-finite restrictions, we know in fact there are infinitely many such candidates. What is left to do is to recursively build a condition $h'$ extending both $h$ and $q$ such that $(S_{h'})_1 = (S_h)_1$ and $\mathcal{A}_{h'}=\mathcal{A}_h\cup \mathcal{A}_q$. 

Enumerate $(S_h)_1-(S_q)_1$ as $\{\beta_i: i\in \omega\}$. Define $\langle l_i: i\in \omega\rangle$ where each $l_i$ is a countable function from $j(\kappa)\times \omega_1\to \omega_1$. For each $i\in \omega$, for each $f\in \mathcal{A}_q$, find $\alpha=\alpha_f\in \mathrm{dom}(f)$ such that $\alpha_f\neq \alpha_g$ for any $f\neq g \in \mathcal{A}_q$ and $l_i=\{(\alpha_f, \beta_i, f(\alpha_f)): f\in \mathcal{A}_q\}$ is compatible with $S_h$. Also $l_i$ is necessarily compatible with $S_q$ since $\beta_i\not\in (S_q)_1$. Notice for $i\neq j\in \omega$, $l_i$ is compatible with $l_j$. Let $l_\omega=\bigcup_{i\in \omega}l_i \cup S_h \cup S_q$.
It is easy to verify that $h'=(l_\omega, \mathcal{A}_q\cup \mathcal{A}_h)$ works.

\end{proof}

Let $L$ be generic for $j(\mathbb{P}_{\omega_2,\omega_1})/H$ over $V[G][R][H]$. We can further lift $j$ to $j: V[G][H]\to M[G][R][H][L]$. By the choice of $\lambda$, $x=(\dot{x})^{G\times H}\in M[G][H]$. Since $j''\theta\in M$, we have that $j'' x \in M$. Since $\mathrm{Coll}(\omega_1, [\kappa, <j(\kappa))\times j(\mathbb{P}_{\omega_2, \omega_1})/H$ is countably closed in $V[G][H]$ by Claim \ref{quotientCountablyClosed}, we have found a countably closed forcing that witnesses $\kappa$ (recall that ${V[G][H]}\models \kappa=\omega_2$) is generically supercompact with respect to the parameters $\theta$ and $x$.

\end{proof}

Combining Lemma \ref{prikry} and Lemma \ref{factor} we have the following.

\begin{theorem}
Let $\kappa$ be a supercompact cardinal. Then there exists a forcing extension in which $\kappa=\omega_2$, $\mathrm{RC}$ and $\mathrm{WRP}$ both hold, and $\begin{pmatrix}
\omega_2 \\
\omega_1
\end{pmatrix}\not \to \begin{pmatrix}
\omega \\
\omega_1
\end{pmatrix}^{1,1}_{\omega}$.
\end{theorem}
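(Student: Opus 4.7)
The plan is to combine Lemmas \ref{prikry} and \ref{factor} in a straightforward two-step forcing. I would let $G$ be generic for $\mathrm{Coll}(\omega_1,<\kappa)$ over $V$ and let $H$ be generic for $(\mathbb{P}_{\omega_2,\omega_1})^{V[G]}$ over $V[G]$; the target model is $V[G][H]$. Three things must be checked: that $\kappa=\omega_2$ in $V[G][H]$, that $\mathrm{RC}$ and $\mathrm{WRP}$ hold there, and that the partition relation fails there.

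For the cardinal arithmetic, I would note that $\mathrm{Coll}(\omega_1,<\kappa)$ is countably closed and $\kappa$-c.c., so $V[G]\models \kappa=\omega_2$, and a standard nice-name count gives $V[G]\models\mathrm{CH}$. With $\mathrm{CH}$ in hand, Claim \ref{niceChainClosure} applies in $V[G]$, so $(\mathbb{P}_{\omega_2,\omega_1})^{V[G]}$ is countably closed and $\aleph_2$-c.c.\ there; hence forcing with it over $V[G]$ preserves $\omega_1$ and $\omega_2$, and we obtain $V[G][H]\models \kappa=\omega_2$. The two reflection principles are then immediate: Lemma \ref{factor} states directly that $V[G][H]\models \mathrm{RC}+\mathrm{WRP}$.

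For the partition failure, I would invoke Lemma \ref{prikry} in $V[G]$, whose hypothesis $\mathrm{CH}$ has already been verified. This produces in $V[G][H]$ a function $f:\omega_2\times\omega_1\to\omega_1$ such that $f''(A\times B)=\omega_1$ for every $A\in [\omega_2]^\omega$ and every $B\in[\omega_1]^{\omega_1}$. Composing with any non-constant $g:\omega_1\to\omega$ yields a coloring $g\circ f:\omega_2\times\omega_1\to\omega$ that witnesses the failure of $\begin{pmatrix}\omega_2\\\omega_1\end{pmatrix}\to\begin{pmatrix}\omega\\\omega_1\end{pmatrix}^{1,1}_{\omega}$, since for any admissible $A,B$ one has $(g\circ f)''(A\times B)=g''\omega_1$, which has at least two elements and thus is not a singleton.

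Since the substantive work lies in the two cited lemmas, there is no genuine obstacle here; the only thing to track carefully is that the $\mathrm{CH}$ hypothesis needed by Lemma \ref{prikry} is exactly what the Lévy collapse supplies in the intermediate model $V[G]$, so that Lemmas \ref{prikry} and \ref{factor} can be applied to the same two-step extension simultaneously.
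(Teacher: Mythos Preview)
Your proposal is correct and follows exactly the paper's approach: the paper's proof is the single sentence ``Combining Lemma \ref{prikry} and Lemma \ref{factor} we have the following,'' and you have simply unpacked that combination. One minor remark: the claim $V[G]\models\mathrm{CH}$ is not really a nice-name count but rather the observation that $\mathrm{Coll}(\omega_1,<\kappa)$ is countably closed and hence adds no reals, together with $(2^\omega)^V<\kappa$ (or an implicit assumption of $\mathrm{CH}$ in the ground model, which the paper also makes without comment in the proof of Lemma~\ref{factor}).
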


\section{Acknowledgment}
 I thank James Cummings and Stevo Todor\v{c}evi\'{c} for many helpful discussions and comments. I am especially grateful to the anonymous referee for many extremely helpful comments and suggestions which lead to significant improvements of this paper.

\bibliographystyle{plain}
\bibliography{bib}

\Addresses

\end{document}